\def\Cline#1#2{\@Cline#1#2\@nil}
\def\@Cline#1-#2#3\@nil{%
  \omit
  \@multicnt#1%
  \advance\@multispan\m@ne
  \ifnum\@multicnt=\@ne\@firstofone{&\omit}\fi
  \@multicnt#2%
  \advance\@multicnt-#1%
  \advance\@multispan\@ne
  \leaders\hrule\@height#3\hfill
  \cr}
\theoremstyle{plain}
\newtheorem{thm}{Theorem}[section]
\newtheorem{prop}[thm]{Proposition}
\newtheorem{lem}[thm]{Lemma}
\newtheorem{cor}[thm]{Corollary}
\theoremstyle{definition}
\newtheorem{defn}[thm]{Definition}
\newtheorem{ex}[thm]{Example}
\theoremstyle{remark}
\newtheorem{rem}[thm]{Remark}
\newcommand{\one}{\mathbbm{1}}
\newcommand{\braces}[1]{{\rm (}#1{\rm )}}
\newcommand{\<}{\langle}
\renewcommand{\>}{\rangle}
\newcommand{\wt}{\widetilde}
\newcommand{\wh}{\widehat}
\newcommand{\dist}{\operatorname{dist}}
\newcommand{\R}{\ensuremath{\mathbb R}}    
\newcommand{\C}{\ensuremath{\mathbb C}}    
\newcommand{\Q}{\ensuremath{\mathbb Q}}    
\newcommand{\N}{\ensuremath{\mathbb N}}    
\newcommand{\Z}{\ensuremath{\mathbb Z}}    
\newcommand{\T}{\ensuremath{\mathbb T}}    
\newcommand{\calB}{\mathcal B}         \newcommand{\frakB}{\mathfrak B}
\newcommand{\calD}{\mathcal D}         
\newcommand{\calF}{\mathcal F}         
\newcommand{\calH}{\mathcal H}
\newcommand{\calL}{\mathcal L}
\newcommand{\calX}{\mathcal X}
         \newcommand{\bE}{\mathbb E}
         \newcommand{\bP}{\mathbb P}
         \newcommand{\bV}{\mathbb V}
\newcommand{\la}{\lambda}
\newcommand{\veps}{\varepsilon}
\newcommand{\vphi}{\varphi}
\renewcommand{\Re}{\operatorname{Re}}
\newcommand{\linspan}{\operatorname{span}}
\newcommand{\Lra}{\Longrightarrow}
\newcommand{\upto}{\uparrow}
\begin{document}

\onecolumn
\icmltitle{Variance representations and convergence rates for data-driven approximations of Koopman operators}




\begin{icmlauthorlist}
\icmlauthor{Friedrich M. Philipp}{yyy}
\icmlauthor{Manuel Schaller}{yyy}
\icmlauthor{Septimus Boshoff}{zzz}
\icmlauthor{Sebastian Peitz}{zzz}
\icmlauthor{Feliks N\"uske}{aaa,bbb}
\icmlauthor{Karl Worthmann}{yyy}
\end{icmlauthorlist}

\icmlaffiliation{yyy}{Technische Universit\"at Ilmenau, Optimization-based Control Group, Institute of Mathematics, Ilmenau, Germany}
\icmlaffiliation{zzz}{Paderborn University, Department of Computer Science, Paderborn, Germany}
\icmlaffiliation{aaa}{Max Planck Institute for Dynamics of Complex Technical Systems, Magdeburg, Germany}
\icmlaffiliation{bbb}{Freie Universität Berlin, Department of Mathematics and Computer Science}

\icmlcorrespondingauthor{Friedrich Philipp}{friedrich.philipp@tu-ilmenau.de}

\icmlkeywords{Koopman operator, Machine Learning, Extended Dynamic Mode Decomposition, Error Bounds, Ergodic Sampling}

\vskip 0.3in



\printAffiliationsAndNotice 

\begin{abstract} 
We rigorously derive novel error bounds for extended dynamic mode decomposition (EDMD) to approximate the Koopman operator for discrete- and continuous time (stochastic) systems; both for i.i.d.\ and ergodic sampling under non-restrictive assumptions. We show exponential convergence rates for i.i.d.\ sampling and provide the first superlinear convergence rates for ergodic sampling of deterministic systems. The proofs are based on novel exact variance representations for the empirical estimators of mass and stiffness matrix. Moreover, we verify the accuracy of the derived error bounds and convergence rates by means of numerical simulations for highly-complex dynamical systems including a nonlinear partial differential equation.
\end{abstract}

\section{Introduction}
Extended Dynamic Mode Decomposition (EDMD; \cite{WillKevr15}) is one of the most commonly used machine-learning methods for identifying highly-nonlinear and, in addition, possibly infinite-dimensional dynamical systems from data. At its heart, EDMD provides a data-driven approach to learn the Koopman operator~\cite{Koop31} propagating a finite number of observable functions along the flow, which results in a purely data-driven and well-interpretable surrogate model for analysis, prediction, and control, e.g., based on identified symmetries for data augmentation~\cite{WeisSinh22} or deep learning~\cite{HanEule21}. Since the Koopman operator is linear, 
it serves as a 
powerful tool to leverage 
well-established concepts 
from approximation, ergodic, operator, and 
statistical-learning theory in 
certifiable 
machine learning also in safety-critical applications. 

Initiated in \cite{Mezi04,Mezi05}, EDMD and Koopman-based methods have successfully enabled data-driven simulations and analysis of various highly complex applications, such as molecular dynamics~\cite{SchuetteKoltaiKlus2016,WuNusk17,KlusNueske2018}, nonlinear partial differential equations including turbulent flows \cite{GianKolc18,Mezi13}, quantum mechanics~\cite{klus2022koopman}, neuroscience~\cite{BrunJohn16}, deep learning \cite{DR20}, electrocardiography~\cite{GolaRady21} or climate prediction~\cite{AzenEri20} to name just a few. 
For further applications and Koopman-related learning architectures, we refer to \citet{KutzBrun16,Maur20,BrunBudi22,RetcAmos23}.


Convergence of EDMD to the Koopman operator in the infinite data limit was proven in \citet{KordaMezi18}. However, despite the enormous success of EDMD, error bounds depending on the number of 
data samples are still scarce. The first finite-data error bounds were given in~\citet{Mezi22}: for deterministic systems based on ergodic sampling under the rather strong assumption that the spectrum of the Koopman operator is discrete and non-dense on the unit circle. First results for i.i.d.\ (independently and identically distributed) sampling of systems governed by ordinary differential equations (ODEs) can be found in~\citet{ZhanZuaz23}. For stochastic systems, finite-data error bounds were firstly derived 
in~\citet{NuskPeit23} under both i.i.d.\ and ergodic sampling; including an extension to control systems. However, the result for ergodic sampling hinges on the exponential stability of the Koopman semigroup. 
For observable functions contained in a Reproducing Kernel Hilbert Space (RKHS), bounds were only recently provided for prediction \cite{PhilScha23} and control \cite{PhilScha23b} of continuous-time systems, by \cite{KostLoun23} and \cite{KostNov22} for i.i.d.\ sampling from an invariant measure, and by \cite{LiMeun22} and \cite{CiliRosa20} for conditional mean embeddings which are closely related to the Koopman operator. For a treatment of the $L^\infty$-estimation error of Koopman operator estimators for deterministic systems see \cite{KoehPhil24}. However, although EDMD can be viewed as a kernel regression method on a finite-dimensional RKHS (the space spanned by the observables), the above results do in general not apply, since in this special finite-dimensional case, the imposed assumptions reduce to the so-called {\em well-specified setting}, where the RKHS is invariant under the Koopman operator, which cannot be expected for finite-dimensional function spaces. In conclusion, so far only finite-data error bounds exist for continuous-time systems and, if ergodic sampling is considered, under rather 
restrictive conditions. Moreover, all bounds decay at most linearly in the amount of data used for estimation.


In this work, we focus on the estimation error for EDMD and provide a complete analysis of the 
latter for a much larger 
class of Markov processes in Polish spaces. 
In particular, we remove the (restrictive) requirement of exponential stability and the restriction to systems governed by stochastic differential equations in~\cite{NuskPeit23}. This broadens the class of systems covered by our results such that, in addition, 
nonlinear partial differential equations and discrete-time Markov processes are included. In particular, all systems considered in~\cite{RozwMehr23} are covered rendering the proposed techniques accessible for a profound error analysis w.r.t.\ the number of used data samples. 
We provide 
bounds on the convergence rate and, thus, the sampling efficiency of EDMD, i.e., Koopman-based machine learning for both i.i.d.~data and ergodic sampling.
Since data can be collected from a single (sufficiently-long) trajectory, which considerably facilitates the data collection process, ergodic sampling is of particular interest for many  practical applications as demonstrated in our examples. 


Our contribution in this work is three-fold:
\begin{enumerate}
    \item [(1)] We severely weaken assumptions made in previous works for ergodic sampling of stochastic systems and prove
    error bounds with a linear rate. 
    \item [(2)] We derive the first error bounds showing 
    superlinear convergence 
    for ergodic sampling.
    \item [(3)] We establish all 
    our results for continuous- and discrete-time systems --~for 
    i.i.d.\ and ergodic sampling.
\end{enumerate}


\noindent \textbf{Notation}: We denote the constant function $x\mapsto 1$ by $\one$. Furthermore, the notation $\<\cdot,\cdot\>_F$ and $\|\cdot\|_F$ is used for the Frobenius scalar product on~$\R^{n\times m}$ and its corresponding norm, respectively. We use the notation $[n:m] := \mathbb{Z} \cap [n,m]$. For a probability measure $\mu$, the scalar product and the norm on~$L^2(\mu)$ are denoted by $\<\cdot,\cdot\>$ and $\|\cdot\|$, respectively. The orthogonal projection w.r.t.\ a closed subspace $M$ in $L^2(\mu)$ is denoted by $P_M$.

\section{EDMD: Data-driven prediction of nonlinear dynamics}

Let $(X_n)_{n\in\N_0}$ be a time-homogeneous discrete-time Markov process taking its values in a Polish space~$\calX$.
One representative system class is given by \textit{discrete-time} dynamical systems
\begin{equation}\tag{DTDS}\label{eq:DTDS}
    x_{n+1} = T(x_n) + \veps_n
\end{equation}
with independently and identically distributed (i.i.d.) noise~$\veps_n$.
Another system class contained in our setting are processes, which arise from samples of a time-homogeneous {\em continuous-time} Markov process $(Y_t)_{t\ge 0}$, i.e., $X_n = Y_{n\Delta t}$ with sampling period~$\Delta t > 0$. 
The process~$Y_t$ might, e.g., be the solution of a stochastic differential equation
\begin{equation}\tag{SDE}\label{eq:SDE}
    dY_t = f(Y_t)\,dt + \sigma(Y_t)\,dW_t.
\end{equation}
This includes their deterministic analogues, i.e., \eqref{eq:DTDS} with $\veps_n \equiv 0$ and \eqref{eq:SDE} with $\sigma \equiv 0$.

Let $\rho : \calX\times\frakB(\calX)\to [0,1]$ denote the transition kernel associated with $(X_n)$, where $\frakB(\calX)$ denotes the Borel sigma algebra on $\calX$, i.e., $\rho(x,A) = \bP(X_{n+1} \in A \mid X_n = x)$. 
Then we have
$\bP^{X_{n+1}}(A) = \int\rho(x,A)\,d\bP^{X_n}(x)$, where $\bP^X$ denotes the law of a random variable $X$. 
Using the notation $\rho_0(x,A) := \delta_x(A)$ with the Dirac measure~$\delta_x$, we iteratively define $\rho_{n+1}(x,A) := \int\rho_n(y,A)\,\rho(x,dy)$ for $x\in\calX$ and $A\in\frakB(\calX)$, where $\rho(x,dy)$ stands for $d\rho(x,\cdot)(y)$. 
Then, $\rho = \rho_1$ and $\bP^{X_{n}}(A) = \int\rho_n(x,A)\,d\bP^{X_0}(x)$ hold.
For discrete-time deterministic dynamics, i.e., \eqref{eq:DTDS} with $\varepsilon_n \equiv 0$, we have $\rho(x,A) = \delta_x(T^{-1}(A))$.

\paragraph{The Koopman operator and EDMD.} 
%
Let $\mu$ be a Borel probability measure on $\calX$ satisfying
\begin{align}\label{e:nu}
\int\rho(x,A)\,d\mu(x)\,\le\,L^2\mu(A),\qquad A\in\frakB(\calX),
\end{align}
with a constant $L\ge 0$, see \cite{PhilScha23b} for a detailed discussion. For $p\in [1,\infty)$, the {\em linear}, but infinite-dimensional Koopman operator $K_p : L^p(\mu)\to L^p(\mu)$ of the nonlinear process~$X_n$ is defined by the identity
\begin{equation}\label{e:koopman}
    (K_p\psi)(x) = \bE\big[\psi(X_1)\,|\,X_0=x\big] = \int\psi(y)\,\rho(x,dy)
\end{equation}
for all $\psi\in L^p(\mu)$.\footnote{In Appendix~\ref{a:just}, we show that $\psi \in L^p(\mu)$ is $\rho(x,\,\cdot\,)$-inte\-grable for $\mu$-a.e.\ $x\in\calX$. Hence, the Koopman operator is well defined. In fact, condition~\eqref{e:nu} is both necessary and sufficient for $K_p$ to be well-defined and bounded (with $\|K_p\| \le L^{2/p}$).} 
In the deterministic case~\eqref{eq:DTDS} with $\veps=0$, this reduces to $K_p\psi = \psi\circ T$. An iterative application of~\eqref{e:koopman} yields $(K_p^n \psi)(x) = \int \psi(y)\,\rho_n(x,dy)$ for $n\in\N$. We set $K := K_2$.

Let us briefly recall the well-known extended dynamic mode decomposition (EDMD, see~\citet{WillKevr15}), which aims at approximating the Koopman operator. To this end, 
let a dictionary $\calD = \{\psi_1,\ldots,\psi_N\}\subset L^2(\mu)$ of $\mu$-linearly independent (cf.\ Definition~\ref{d:lin_ind}) continuous functions on $\calX$ be given. 
For 
the $N$-dimensional subspace $\bV := \linspan\calD$ and matrices $C,C_+\in\R^{N\times N}$ given by
\[
    C = \big(\<\psi_i,\psi_j\>\big)_{i,j=1}^N
\quad\text{ and }\quad
C_+ = \big(\<\psi_i,K\psi_j\>\big)_{i,j=1}^N,
\]
$C$ is invertible and the matrix representation $K_\bV$ of the compression $P_\bV K|_\bV$ 
w.r.t.\ the basis $\calD$ is 
\begin{align}\label{e:compression}
K_\bV = C^{-1}C_+,
\end{align}
as rigorously shown in Lemma~\eqref{l:mat_repr}.
In EDMD, the matrix $K_\bV$ is learned by using evaluations of the dictionary observables on data samples $(x_k,y_k) \in \calX \times \calX$, $k \in [0:m-1]$, which are collected in the 
$N\times m$ data matrices
\[
\Psi_X = \big[\Psi(x_k)\big]_{k=0}^{m-1}\quad\text{ and }\quad \Psi_Y = \big[\Psi(y_k)\big]_{k=0}^{m-1},
\]
where $\Psi = [\psi_1,\ldots,\psi_N]^\top$. Then, the matrix representation of the Koopman operator estimator is given by
\begin{align}\label{e:koop_est}
    \wh K_m = \wh C^{-1}\wh C_+,
\end{align}
using 
the empirical estimators of $C$ and $C_+$, respectively, i.e., $\wh C = \tfrac 1m\Psi_X\Psi_X^\top$ and $\wh C_+ = \tfrac 1m\Psi_X\Psi_Y^\top$.


In this work, we distinguish between two different sampling schemes.

\paragraph{(S1) Ergodic sampling $\mu = \pi$.}
We assume the existence of an invariant probability measure~$\pi$ for $X_n$, i.e.,
\begin{align}\label{e:inv}
    \int\rho(x,A)\,d\pi(x) = \pi(A),\qquad A\in\frakB(\calX).
\end{align}
In the deterministic case of~\eqref{eq:DTDS}, invariance of~$\pi$ corresponds to $\pi(T^{-1}(A)) = \pi(A)$ for all $A\in\frakB(\calX)$, i.e., $T$ is measure-preserving. Further, $\pi$ is assumed to be ergodic, i.e., whenever $A\in\frakB(\calX)$ is such that $\rho(x,A)=1$ for all $x\in A$, then $\pi(A)\in\{0,1\}$.

In this case, the Koopman operator $K_p$ is a contraction in~$L^p(\pi)$ and even an isometry in the deterministic case. 
The EDMD data consists of samples $x_k = X_k$ from a single trajectory of~$X_n$ with $x_0 \sim \pi$ and $y_k = x_{k+1}$. To ensure a.s.\ invertibility of~$\wh C$, we shall assume that for each $(N-1)$-dimensional subspace $M\subset\R^N$ and $x \in \Psi^{-1}(M)$, we have $\rho(x,\Psi^{-1}(M))=0$, see Appendix~\ref{a:proofs} for a proof of this sufficient condition.

\paragraph{(S2) I.i.d.\ sampling $\mu = \nu$.}
Let $\nu$ be any probability distribution on~$\calX$ satisfying \eqref{e:nu} with some constant $L\ge 0$. In this case, the EDMD data consists of i.i.d.\ samples $x_k \in \calX$ and $y_k|(x_k=x)\sim\rho(x,\,\cdot\,)$. Here, we assure the almost sure invertibility of $\wh C$ by assuming that $m \ge N$ and $\psi_1,\ldots,\psi_N$ are strongly $\mu$-linearly independent, see Appendix~\ref{a:proofs} for details including a proof of this characterization.

In the remainder of the manuscript, we tacitly assume
$$
\vphi := \sum_{j=1}^N\psi_j^2\in L^2(\mu).
$$

\section{Certifiable and efficient machine learning}\label{sec:results}
In this section, we provide a concise overview on our main results. In particular, we present novel 
error bounds depending on the amount of data samples~$m$. The key tool enabling us to derive the error estimates and the respective convergence rates are 
exact formulae representing the variance of the EDMD estimators, which are shown --~together with an in-depth analysis~-- in the subsequent Section~\ref{sec:proofs}.

We motivate our findings by showing numerically approximated 
convergence rates while referring to Section~\ref{sec:numerics} for a detailed description of the numerical experiments. To be slightly more precise, we consider a reversible stochastic system modeling the folding kinetics of a
protein and a deterministic nonlinear partial differential equation
given by the Kuramoto-Sivashinsky equation for chaotic flame propagation.
In Figure~\ref{fig:spoiler}, we depict the convergence rate of the learning error $\|K_\bV - \wh K_m\|_F$ of EDMD in terms of the amount of data~$m$ used for ergodic sampling \textbf{(S1)}. We clearly observe that the convergence rate of the error is linear for the molecular dynamics example (note that we depict the root mean square error $[\bE[\|\wh C - C\|^2_F]]^{1/2}$ with corresponding rate $m^{-1/2}$) and superlinear for the deterministic nonlinear partial differential equation~(PDE).

\begin{figure}[htb]
    \centering
    \includegraphics[width=.35\columnwidth]{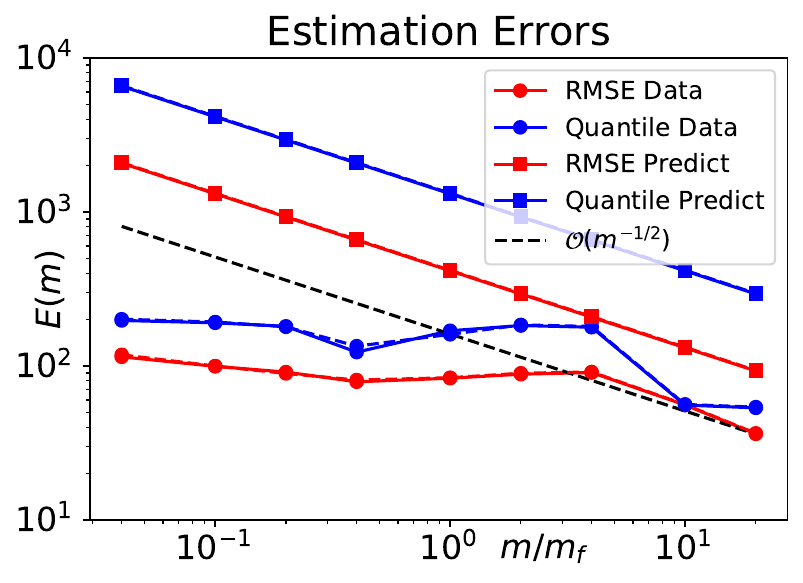}
    \hspace*{1cm}
    \includegraphics[width=.29\columnwidth]{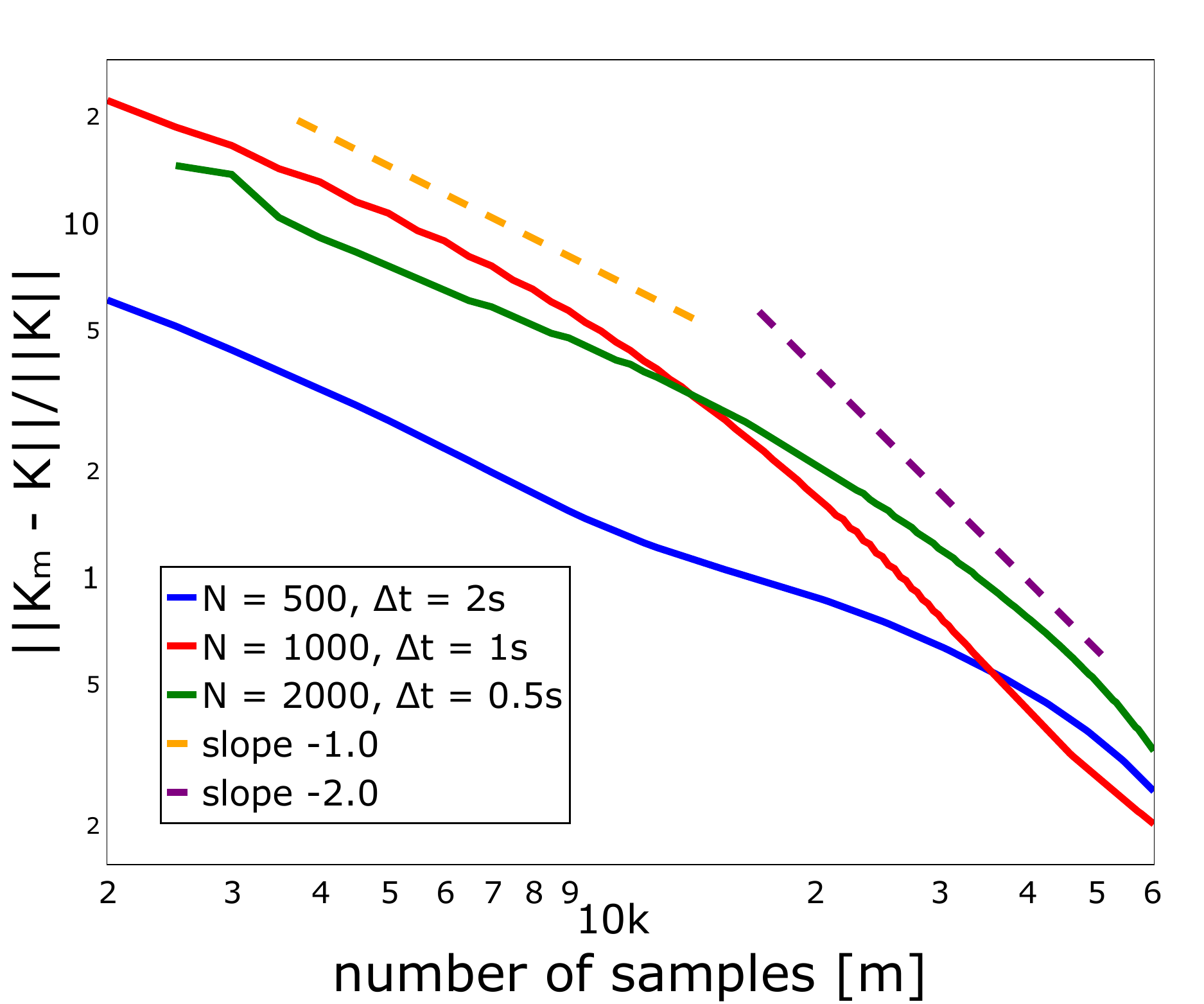}
    \caption{Convergence rate for ergodic sampling of a stochastic system for protein folding (left) and of a deterministic partial differential equation modeling chaotic flame propagation (right).}
    \label{fig:spoiler}
\end{figure}
\renewcommand{\arraystretch}{2.5}
\renewcommand\theadgape{\Gape[8pt]}
\renewcommand\cellgape{\Gape[8pt]}
\newcolumntype{?}{!{\vrule width 1.5pt}}

\begin{table*}[t]
\centering
\resizebox{.75\columnwidth}{!}{
  \begin{tabular}{|c c c|c|c?}
    \cline{1-4}
    &&\multicolumn{2}{|c|}{\large Sampling}\\
    \cline{3-4}
    && \multicolumn{1}{|c|}{\makecell{i.i.d.$\sim$\,probability distribution $\nu$ \\ $\int \rho(x,A)\,\mathrm{d}\nu(x) \leq L^2 \nu(A)$}}& \makecell{invariant and ergodic measure $\pi$\\$\int \rho(x,A)\,\mathrm{d}\pi(x) = \pi(A)$}\\
    \cline{1-4}
    \parbox[t]{2mm}{\multirow{2}{*}{\rotatebox[origin=c]{90}{\large Stochastic\hspace*{.1cm}}}}&\multicolumn{1}{|c|}{ \rotatebox[origin=c]{90}{\hspace{.1cm}DTDS}} & {\huge\checkmark}(Theorem~\ref{t:main1}) &  {\huge\checkmark}(Theorem~\ref{t:main2})\\
    \cline{2-4}
    &\multicolumn{1}{|c|}{\rotatebox[origin=c]{90}{\hspace{.15cm}SDE}} & \cite{NuskPeit23}: {\checkmark} & \makecell{\textcolor{gray}{\cite{NuskPeit23}: exp.~stability}\\ 
    {\huge\checkmark} (Theorem~\ref{t:main2})}\\
    \cline{1-5}\Cline{4-5}{1.5pt}
    \parbox[t]{2mm}{\multirow{2}{*}{\rotatebox[origin=c]{90}{\large Deterministic\hspace*{.1cm}}}}&\multicolumn{1}{|c|}{\rotatebox[origin=c]{90}{\hspace*{.1cm}\makebox{\makecell{DTDS\\ $(\varepsilon=0)$}}}} & {\huge\checkmark}(Theorem~\ref{t:main1}) & %
    \multicolumn{1}{?c|}{\makecell{\textcolor{gray}{\cite{Mezi22}: linear rate}\\ 
    {\huge\checkmark} (Theorem~\ref{thm:main_quadratic})}}
    & \parbox[t]{3mm}{\multirow{2}{*}{\rotatebox[origin=c]{90}{Superlinear rate }}}\\
    \cline{2-4}
    &\multicolumn{1}{|c|}{\rotatebox[origin=c]{90}{\makecell{SDE\\$(\sigma=0)$}}} & \makecell{ \cite{ZhanZuaz23}: {\checkmark}} & \multicolumn{1}{?c|}{{\huge \checkmark} (Theorem~\ref{thm:main_quadratic})}&\\
    \cline{1-5}\Cline{4-5}{1.5pt}
  \end{tabular}
  }
  \label{tab:cont}
  \caption{Fundamental contributions of this work.}
\end{table*}

The main contribution are rigorously derived 
error bounds for data-driven learning in the Koopman framework based on EDMD. As a byproduct, we obtain the first rigorous error analysis directly applicable to a broad class of highly-complex systems, which contains --~among many others~-- the two considered challenging applications. Our key findings are, in addition, summarized in Table~\ref{tab:cont}. 

\noindent \textbf{Case~(S1): Ergodic sampling of stochastic systems}. 
For this sampling strategy and the special case~\eqref{eq:SDE}, error bounds with linear rate were already given in~\cite{NuskPeit23}. The proof relied on the assumption that the Koopman semigroup on $\one^\perp$ is exponentially stable. 
Our first major result 
Theorem~\ref{t:main2} does not require this assumption and provides a linear rate for a wide class of 
stochastic systems, including the molecular dynamics application as well as discrete-time systems like~\eqref{eq:DTDS}. We show that there is a constant $c \geq 0$ such that for all $\veps>0$,
\begin{align*}
    \bP\big(\|K_\bV - \wh K_m\|_F > \veps\big)\,\le\,\frac{c}{m\veps^2}
\end{align*}
holds. The corresponding convergence rate depicted in Figure~\ref{fig:spoiler} (left) reveals that this linear rate is sharp---in the sense that there are systems exposing the predicted worst-case linear convergence rate. 

\textbf{Case~(S1): Ergodic sampling of deterministic systems}, e.g., \eqref{eq:DTDS} with $\varepsilon_n\equiv 0$ or \eqref{eq:SDE} with $\sigma \equiv 0$. 
The main condition for obtaining the above error bound for stochastic systems is that $\la=1$ is an isolated eigenvalue of the Koopman operator. Although this assumption is much more general than that of exponential stability of the Koopman semigroup on $\one^\perp$ imposed in \cite{NuskPeit23}, it excludes a broad class of {\em deterministic cases}, cf.\ \citet{KakuPete81}. However, leveraging advanced tools from operator theory, we prove the first superlinear convergence rates for EDMD with ergodic sampling of deterministic systems (such as nonlinear PDEs) as a second major result in Theorem~\ref{thm:main_quadratic}. That is, we prove that there are constants $c\geq 0$ and $\alpha\ge 1$ such that for all $\veps >0$
\begin{align*}
\bP\big(\|K_\bV-\wh K_m\|_F > \veps\big)\le\frac{c}{m^\alpha\veps^2}.
\end{align*}
This rate is observed in Figure~\ref{fig:spoiler} (right) illustrating EDMD for the Kuramoto-Sivashinsky equation.

\noindent \textbf{Case~(S2): i.i.d.~sampling} of, e.g., \eqref{eq:DTDS} or \eqref{eq:SDE}, see Section~\ref{sec:proofs} for details. The last major result considers the case of i.i.d.~sampling. Going beyond \citet{NuskPeit23}, and under very general assumptions, we provide an exponential convergence rate for EDMD with i.i.d.~sampling using Hoeffding's inequality in Theorem~\ref{t:main1}. We show that there are constants $c_1,c_2 >0$ such that for all $\veps>0$
\begin{align*}
\bP\big(\|K_\bV-\wh K_m\|_F > \veps\big)
\le c_1\exp(-c_2 m\veps^2).
\end{align*}

In the next section, 
we provide the precise statements of the presented error bounds. 
In Section~\ref{sec:numerics}, we revisit the molecular dynamics and flame propagation examples in 
an extensive case study. 

\section{Convergence rates for EDMD-based machine learning} 
\label{sec:proofs}

Our strategy to prove the three estimates presented in Section~\ref{sec:results} consists of three major steps. First, we provide a representation formula for the variances of the empirical estimators $\widehat{C}$ and $\widehat{C}_+$ in terms of the $m$ sample points. Second, this representation is combined with concentration inequalities, i.e., Markov's and Hoeffding's inequality, to deduce a probabilistic bound on the errors $\hat{C}-C$ and $\widehat{C}_+-C_+$. In a last step, we combine these to a obtain a bound on the learning error $K_\bV - \widehat{K}_m$ in view of \eqref{e:compression} and \eqref{e:koop_est}.

\paragraph{Case~(S1): Ergodic sampling. Error bounds with non-restrictive 
assumptions.}
We let $\mu = \pi$ and draw the data samples from long ergodic trajectories of the process, according to case {\bf (S1)}.

A key step in our error analysis consists of deducing representations of the variances of $\wh C$ and $\wh C_+$ which are well-suited for further analysis. For the formulation of our next result, we note that always $K\one = \one$, hence, $K\linspan\{\one\}\subset\linspan\{\one\}$, and also $KL^2_0(\mu)\subset L^2_0(\mu)$, where $L^2_0(\mu) = \linspan\{\one\}^\perp$. In what follows, we set $K_0 := K|_{L^2_0(\mu)}$, which is a contractive linear operator from $L^2_0(\mu)$ into itself. Moreover, we define the constants
\[
\bE_+ := \<K\vphi,\vphi\> - \|C_+\|_F^2
\qquad\text{and}\qquad
\bE_0 := \|\vphi\|^2 - \|C\|_F^2.
\]

\begin{thm}\label{t:var}{\rm (Variance representation)}
Define the quantities
\begin{align*}
\sigma_{m,+}^2 := \bE_+ + \sum_{i,j=1}^N\<p_m(K_0)Qg_{ij},Qg_{ji}^*\>
\quad\text{and}\quad
\sigma_{m,0}^2 := \bE_0 + \sum_{i,j=1}^N\<K_0p_m(K_0)Q\psi_{ij},Q\psi_{ij}\>,
\end{align*}
where $\psi_{ij} = \psi_i\psi_j$, $g_{ij} = \psi_i\cdot K\psi_j$, $g_{ji}^* = \psi_j\cdot K^*\psi_i$, $Q = P_{L^2_0(\mu)}$, and $p_m$ is the polynomial
$p_m(z) = 2\sum_{k=1}^{m-1}(1-\tfrac km)z^{k-1}$.
Then the variances of $\wh C_+$ and $\wh C$ admit the following representations:
\begin{align*}
\bE\big[\|C_+ - \wh C_+\|_F^2\big] = \frac{\sigma_{m,+}^2}{m}
\qquad\text{and}\qquad
\bE\big[\|C - \wh C\|_F^2\big] = \frac{\sigma_{m,0}^2}{m}.
\end{align*}
\end{thm}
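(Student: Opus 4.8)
The plan is to work entry-wise and express $\widehat C_+ - C_+$ as a centered empirical average, then compute its second moment exactly using the ergodic/stationary structure of the trajectory. Fix indices $i,j$ and set $h_k := \psi_i(x_k)\psi_j(y_k) = \psi_i(X_k)\psi_j(X_{k+1})$. Since $x_0 \sim \pi$ and $y_k = x_{k+1} = X_{k+1}$, stationarity gives $\mathbb E[h_k] = \mathbb E[\psi_i(X_0)\psi_j(X_1)] = \langle \psi_i, K\psi_j\rangle = (C_+)_{ij}$ for every $k$, so $(\widehat C_+)_{ij} = \tfrac1m\sum_{k=0}^{m-1} h_k$ is an unbiased estimator and
\[
\mathbb E\big[\|C_+ - \widehat C_+\|_F^2\big] = \sum_{i,j=1}^N \operatorname{Var}\Big(\tfrac1m\sum_{k=0}^{m-1} h_k\Big)
= \frac{1}{m^2}\sum_{i,j}\sum_{k,\ell=0}^{m-1}\operatorname{Cov}(h_k,h_\ell).
\]
By stationarity $\operatorname{Cov}(h_k,h_\ell)$ depends only on $|k-\ell|$; writing $\gamma_{ij}(r) := \operatorname{Cov}(h_0, h_r)$ and collecting diagonal and off-diagonal terms, the usual counting identity $\sum_{k,\ell=0}^{m-1} a_{|k-\ell|} = m a_0 + 2\sum_{r=1}^{m-1}(m-r)a_r$ turns the double sum into $m\gamma_{ij}(0) + 2\sum_{r=1}^{m-1}(m-r)\gamma_{ij}(r)$. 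Dividing by $m^2$ produces exactly the prefactor $1/m$ and the polynomial $p_m(z) = 2\sum_{k=1}^{m-1}(1-\tfrac km)z^{k-1}$ once the lag-$r$ covariances are identified with $r$-th powers of $K_0$ acting on suitable vectors.

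The core computation is therefore to re-express $\gamma_{ij}(r)$ operator-theoretically. For $r\ge 1$, the Markov property and the tower rule give $\mathbb E[h_0 h_r] = \mathbb E\big[\psi_i(X_0)\psi_j(X_1)\cdot \psi_i(X_r)\psi_j(X_{r+1})\big]$, and conditioning successively yields $\mathbb E[h_0 h_r] = \langle K^{r-1} g_{ij}, \, \widetilde g_{ji}\rangle$ for appropriate products, where one term absorbs $\psi_i\cdot K\psi_j = g_{ij}$ and the other, after pushing $\psi_j(X_{r+1})$ back through one Koopman step and using the adjoint to move it onto $X_r$, becomes $\psi_j\cdot K^*\psi_i = g_{ji}^*$; thus $\gamma_{ij}(r) = \langle K^{r-1} g_{ij}, g_{ji}^*\rangle - (C_+)_{ij}(C_+)_{ji}$ (the subtraction removing the product of means). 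Replacing $K^{r-1}$ by $K_0^{r-1}$ on the complement of $\linspan\{\one\}$ is legitimate after projecting with $Q = P_{L^2_0(\mu)}$, because $K\one=\one$ splits off a rank-one piece whose contribution is precisely the mean-correction; one must check that the $\langle Q g_{ij}, \one\rangle\langle\one, Qg_{ji}^*\rangle$-type bookkeeping matches $(C_+)_{ij}(C_+)_{ji}$ summed over $i,j$, which is where the constant $\mathbb E_+ = \langle K\varphi,\varphi\rangle - \|C_+\|_F^2$ enters: summing $\gamma_{ij}(0) = \mathbb E[h_0^2] - (C_+)_{ij}(C_+)_{ji}$ over $i,j$ and noting $\sum_{i,j}\mathbb E[\psi_i(X_0)^2\psi_j(X_1)^2] = \mathbb E[\varphi(X_0)\varphi(X_1)] = \langle K\varphi,\varphi\rangle$ while $\sum_{i,j}(C_+)_{ij}(C_+)_{ji} = \|C_+\|_F^2$ gives the $\mathbb E_+$ term, and the lag-$r$ sums assemble into $\sum_{i,j}\langle p_m(K_0)Qg_{ij}, Qg_{ji}^*\rangle$. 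The argument for $\widehat C$ is identical with $\psi_j(y_k)$ replaced by $\psi_j(x_k)$, so $h_k = \psi_{ij}(X_k)$, both ``endpoints'' of each lag are at the same time, only one Koopman step (of length $r$) appears, and one power of $K_0$ is exposed up front — yielding $\sigma_{m,0}^2 = \mathbb E_0 + \sum_{i,j}\langle K_0 p_m(K_0)Q\psi_{ij}, Q\psi_{ij}\rangle$ with $\mathbb E_0 = \|\varphi\|^2 - \|C\|_F^2$.

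The main obstacle I anticipate is the careful adjoint/mean-decomposition bookkeeping in the off-diagonal terms: correctly tracking which Koopman factor becomes $K^*$ versus $K$ when conditioning a product that straddles times $0,1,r,r+1$, and verifying that the rank-one part coming from $K\one = \one$ (equivalently, the $Q$-projections) exactly reproduces $-(C_+)_{ij}(C_+)_{ji}$ so that nothing is double-counted and the series closes to the stated $p_m(K_0)$. Everything else — unbiasedness, the stationary-covariance counting identity, and the reduction to $K_0$ via boundedness of $K$ (which holds by \eqref{e:nu}) — is routine, and integrability of all the products is guaranteed by the standing assumption $\varphi = \sum_j \psi_j^2 \in L^2(\mu)$ together with $K$ bounded on $L^2(\mu)$.
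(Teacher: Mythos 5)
Your plan is essentially the paper's own proof: expand the Frobenius error using stationarity and the counting identity $\sum_{k,\ell}a_{|k-\ell|}=ma_0+2\sum_{r\ge1}(m-r)a_r$, identify the lag-$r$ correlation $\bE[h_0h_r]=\langle K^{r-1}g_{ij},g_{ji}^*\rangle$ by iterating the transition kernel and moving one factor through the adjoint, split off the rank-one component along $\one$ (using $K\one=\one$ and invariance of $L^2_0(\mu)$) so that the mean corrections cancel and the lag sums assemble into $p_m(K_0)$, and argue analogously for $\wh C$, where $K^r$ yields $K_0p_m(K_0)$. Two small points to repair when writing it out: the product of means is $(C_+)_{ij}^2$, not $(C_+)_{ij}(C_+)_{ji}$, and the identity you need is $\sum_{i,j}(C_+)_{ij}^2=\|C_+\|_F^2$ (note $\sum_{i,j}(C_+)_{ij}(C_+)_{ji}=\operatorname{tr}(C_+^2)$ is not the Frobenius norm for non-symmetric $C_+$, although this slip does not change the outcome once corrected); and the adjoint-moving step, i.e.\ $\int \psi_i\cdot K_1[\psi_j\cdot K^{r-1}g_{ij}]\,d\mu=\langle \psi_j\cdot K^*\psi_i,\,K^{r-1}g_{ij}\rangle$, requires a short duality/integrability argument (the paper's Lemma~\ref{l:geht}), since $\psi_j\cdot K^{r-1}g_{ij}$ need not lie in $L^2(\mu)$, so "boundedness of $K$ on $L^2$" alone does not justify it.
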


Next, a thorough analysis of the expressions $\sigma_{m,+}^2$ and $\sigma_{m,0}^2$ in the variance representations leads to the following bounds:
\begin{align*}
\sigma_{m,+}^2\le\left[1 + \|p_m(K_0)\|\right]\bE_{+}
\qquad\text{and}\qquad
\sigma_{m,0}^2\le\left[1 + \|K_0p_m(K_0)\|\right]\bE_{0}.
\end{align*}
So far, the results hold in full generality. However, if we further assume that $\la=1$ is an isolated simple eigenvalue\footnote{This condition is equivalent to $\la\notin\sigma(K_0)$.} of $K$, we may further estimate the above bounds independently of $m$:
\begin{align}
\begin{split}\label{e:sigma_est}
\sigma_{m,+}^2\le\left[1+4\|(I-K_0)^{-1}\|\right]\bE_{+}
\qquad\text{and}\qquad
\sigma_{m,0}^2\le\left[1+4\|K_0(I-K_0)^{-1}\|\right]\bE_{0}.
\end{split}
\end{align}
An application of Markov's inequality in combination with Lemma \ref{l:prob_absch} immediately yields the following theorem, which is the main result of this subsection.

\begin{thm}\label{t:main2}
Assume that $\la=1$ is an isolated simple eigenvalue of $K$. Then we have
\begin{align*}
\bP\big(\|C^{-1}C_+ - \wh C^{-1}\wh C_+\|_F > \veps\big)\,\le\,\frac{\alpha}{m\veps^2},
\end{align*}
with $\alpha$ provided in \eqref{e:alpha}. 
In particular, if $\delta\in (0,1)$, then for $m\ge\frac{\alpha}{\delta\veps^2}$ ergodic samples, with probability at least $1-\delta$ we have that $\|C^{-1}C_+ - \wh C^{-1}\wh C_+\|_F \le \veps$.
\end{thm}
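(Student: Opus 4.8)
The plan is to chain together the pieces assembled just above the statement. First I would record the key deterministic fact about EDMD matrices: since $\wh K_m = \wh C^{-1}\wh C_+$ and $K_\bV = C^{-1}C_+$ (both well-defined because $C$ and, almost surely, $\wh C$ are invertible), I can write
\[
K_\bV - \wh K_m = C^{-1}C_+ - \wh C^{-1}\wh C_+ = C^{-1}(C_+ - \wh C_+) + (C^{-1} - \wh C^{-1})\wh C_+,
\]
and then use $C^{-1} - \wh C^{-1} = C^{-1}(\wh C - C)\wh C^{-1}$. This reduces everything to controlling the two random quantities $\|C - \wh C\|_F$ and $\|C_+ - \wh C_+\|_F$, together with the operator norms $\|C^{-1}\|$, $\|\wh C^{-1}\|$, and $\|\wh C_+\|_F$; this is exactly the content that \Cref{l:prob_absch} should be packaging (it is invoked by name in the paragraph preceding the theorem). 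So the first step is: invoke \Cref{l:prob_absch} to obtain a bound of the form $\bP(\|K_\bV - \wh K_m\|_F > \veps) \le \frac{g(\bE[\|C-\wh C\|_F^2],\,\bE[\|C_+-\wh C_+\|_F^2])}{\veps^2}$ for a suitable explicit function $g$ that also absorbs $\|C^{-1}\|$ and the dictionary constants.

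Second, I would feed in the variance bounds. By \Cref{t:var} we have the \emph{exact} identities $\bE[\|C_+-\wh C_+\|_F^2] = \sigma_{m,+}^2/m$ and $\bE[\|C - \wh C\|_F^2] = \sigma_{m,0}^2/m$. Under the standing hypothesis that $\la = 1$ is an isolated simple eigenvalue of $K$ — equivalently $1 \notin \sigma(K_0)$, so that $(I - K_0)^{-1}$ exists as a bounded operator — the estimates in \eqref{e:sigma_est} give the $m$-independent bounds $\sigma_{m,+}^2 \le [1 + 4\|(I-K_0)^{-1}\|]\,\bE_+$ and $\sigma_{m,0}^2 \le [1 + 4\|K_0(I-K_0)^{-1}\|]\,\bE_0$. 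Plugging these into the output of Step~1, every occurrence of $\bE[\|C-\wh C\|_F^2]$ and $\bE[\|C_+ - \wh C_+\|_F^2]$ becomes $\le (\text{constant})/m$, so the whole right-hand side collapses to $\alpha/(m\veps^2)$ for a single constant $\alpha \ge 0$ built from $\|C^{-1}\|$, $\|(I-K_0)^{-1}\|$, $\bE_+$, $\bE_0$, and whatever dictionary-dependent quantities enter \Cref{l:prob_absch}. I would then simply \emph{define} this $\alpha$ — matching the displayed formula referenced as \eqref{e:alpha} — which yields the first displayed inequality of the theorem.

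Third, the "in particular" clause is a one-line rearrangement: given $\delta \in (0,1)$, choosing $m \ge \alpha/(\delta\veps^2)$ forces $\alpha/(m\veps^2) \le \delta$, hence $\bP(\|K_\bV - \wh K_m\|_F > \veps) \le \delta$, i.e.\ $\|K_\bV - \wh K_m\|_F \le \veps$ with probability at least $1-\delta$. (One should note $\wh C$ is invertible almost surely under the assumptions of case \textbf{(S1)}, so $\wh K_m$ is well-defined on that full-probability event, and the bound is vacuous otherwise.)

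The main obstacle, and the only nonroutine part, is Step~1 — the passage from the two matrix-entry errors to the error in the quotient $C^{-1}C_+$. The difficulty is that $\wh C^{-1}$ is an unbounded function of $\wh C$ near the boundary of invertibility, so a crude perturbation bound would not survive the expectation; one needs \Cref{l:prob_absch} to handle this carefully, presumably by splitting on the event $\{\|\wh C - C\| \le \tfrac12\lambda_{\min}(C)\}$ (on which a Neumann series controls $\|\wh C^{-1}\|$ uniformly) and using Markov's inequality on the complement, so that the final bound is genuinely of order $1/(m\veps^2)$ rather than something worse. Everything after that — substituting \Cref{t:var} and \eqref{e:sigma_est}, and the final rearrangement — is bookkeeping.
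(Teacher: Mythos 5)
Your proposal follows essentially the same route as the paper: the paper's proof is exactly the chain Theorem~\ref{t:var} (exact variance formulas) $\to$ the bounds \eqref{e:sigma_est} (Proposition~\ref{p:sig2}, valid since $1\notin\sigma(K_0)$) $\to$ Markov's inequality for $\|C-\wh C\|_F$ and $\|C_+-\wh C_+\|_F$ $\to$ Lemma~\ref{l:prob_absch} to pass to $\|C^{-1}C_+-\wh C^{-1}\wh C_+\|_F$, with the final substitution clause exactly as in your Step~3. The only part you leave implicit is the bookkeeping that assembles the specific constant $\alpha$ of \eqref{e:alpha} (using $\<K\vphi,\vphi\>\le\|\vphi\|^2$, $\|K_0(I-K_0)^{-1}\|\le\|(I-K_0)^{-1}\|$, and $\|C\|_F\|C^{-1}\|_F\ge 1$), which is indeed routine.
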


\begin{rem}
If $K$ is normal (e.g., self-adjoint or unitary), then
$\|(I-K_0)^{-1}\| = \frac 1{\dist(1,\sigma(K_0))}$,    
where $\sigma(K_0)$ denotes the spectrum of the operator $K_0$. If there exist eigenvalues of $K$ close to $\la=1$, the above distance is small (hence $\|(I-K_0)^{-1}\|$ is large) and there exist so-called {\it meta-stable} sets, which are almost invariant; that is, trajectories of $X_n$ remain in these sets for a long time~\cite{Davies1982}. In this case, lots of measurements $m$ are needed to gather sufficient information on the process, which is reflected in Theorem \ref{t:main2}.

\end{rem}

\paragraph{Case~(S1): Ergodic sampling of deterministic systems. Superlinear convergence.}
Let us consider the deterministic subcase of case {\bf (S1)}, where $K$ is a unitary composition operator with a bijective measure-preserving map $T : \calX\to\calX$, i.e., $Kf = f\circ T$. The key result is the next theorem, which shows that the variances of $\wh C_+$ and $\wh C$ exhibit a direct link to mean ergodicity.

\begin{thm}\label{l:fejer}{\rm (Variance representation)}
If $K$ is unitary, for the variances of $\wh C_+$ and $\wh C$ we have
\begin{align*}
\bE\big[\|C_+ - \wh C_+\|_F^2\big] = \sum_{i,j=1}^N\bigg\|\frac 1m\sum_{k=0}^{m-1}K_0^kQg_{ij}\bigg\|^2
\quad\text{and}\quad
\bE\big[\|C - \wh C\|_F^2\big] = \sum_{i,j=1}^N\bigg\|\frac 1m\sum_{k=0}^{m-1}K_0^kQ\psi_{ij}\bigg\|^2.
\end{align*}
\end{thm}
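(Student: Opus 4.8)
The plan is to specialize the general variance representation of Theorem~\ref{t:var} to the unitary case and show that the polynomial expression collapses to the squared Fej\'er average. Recall that for $K$ unitary we have $K^* = K^{-1}$, and $K_0 = K|_{L^2_0(\mu)}$ is also unitary on $L^2_0(\mu)$. The key algebraic identity is that for any unitary $U$ on a Hilbert space and any vector $h$,
\begin{align*}
\Bigl\|\frac 1m\sum_{k=0}^{m-1}U^k h\Bigr\|^2 = \frac 1{m^2}\sum_{k,\ell=0}^{m-1}\langle U^{k-\ell}h,h\rangle = \frac 1{m^2}\Bigl(m\|h\|^2 + \sum_{\substack{k,\ell=0\\k\neq\ell}}^{m-1}\langle U^{k-\ell}h,h\rangle\Bigr),
\end{align*}
and collecting terms with $k-\ell = j$ for $j = 1,\dots,m-1$ (which occur $m-j$ times, together with their conjugates from $j$ negative) gives
\begin{align*}
\Bigl\|\frac 1m\sum_{k=0}^{m-1}U^k h\Bigr\|^2 = \frac 1m\Bigl(\|h\|^2 + \sum_{k=1}^{m-1}(1-\tfrac km)\langle (U^k + U^{-k})h,h\rangle\Bigr) = \frac 1m\Bigl(\|h\|^2 + \langle p_m(U)Uh, h\rangle_{\mathrm{sym}}\Bigr),
\end{align*}
matching the structure of $p_m$. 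So the first step is to run this Fej\'er-kernel computation carefully, tracking the factor $2$ and the shift in $p_m(z) = 2\sum_{k=1}^{m-1}(1-k/m)z^{k-1}$.

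Next I would match the two sides term by term. For the $C$-representation: in Theorem~\ref{t:var} the $C$-term reads $\bE_0 + \sum_{i,j}\langle K_0 p_m(K_0)Q\psi_{ij}, Q\psi_{ij}\rangle$ over $m$, where $\bE_0 = \|\vphi\|^2 - \|C\|_F^2$. One checks that $\|\vphi\|^2 = \sum_{i,j}\|\psi_{ij}\|^2$ and $\|C\|_F^2 = \sum_{i,j}|\langle\psi_i,\psi_j\rangle|^2 = \sum_{i,j}|\langle\psi_{ij},\one\rangle\cdot\text{(something)}|$ — more precisely $\|C\|_F^2 = \sum_{i,j}\langle P_{\linspan\{\one\}}\psi_{ij},\psi_{ij}\rangle$ up to normalization by $\|\one\|^2 = 1$, so that $\bE_0 = \sum_{i,j}\|Q\psi_{ij}\|^2$. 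Then with $h = Q\psi_{ij}$ the unitary identity above gives exactly $\sum_{i,j}\|\frac 1m\sum_{k=0}^{m-1}K_0^k Q\psi_{ij}\|^2 = \tfrac 1m(\sum_{i,j}\|Q\psi_{ij}\|^2 + \sum_{i,j}\langle K_0 p_m(K_0)Q\psi_{ij},Q\psi_{ij}\rangle)$, which is precisely $\sigma_{m,0}^2/m$. For the $C_+$-representation the same works with $h = Qg_{ij}$, once one verifies that in the unitary case $g_{ji}^* = \psi_j\cdot K^*\psi_i$ and the cross-terms $\langle p_m(K_0)Qg_{ij},Qg_{ji}^*\rangle$ reorganize into the symmetric form $\langle K_0 p_m(K_0) Qg_{ij}, Qg_{ij}\rangle$ after using $K_0^* = K_0^{-1}$ and re-indexing the double sum. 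Here one also needs $\bE_+ = \langle K\vphi,\vphi\rangle - \|C_+\|_F^2 = \sum_{i,j}\|Qg_{ij}\|^2$ in the unitary (measure-preserving) case, which follows from $\langle K\vphi,\vphi\rangle = \sum_{i,j}\langle K\psi_{ii}, \psi_{jj}\rangle$-type expansions together with $\|C_+\|_F^2 = \sum_{i,j}|\langle K\psi_j,\psi_i\rangle|^2$ and the fact that the rank-one projection onto $\one$ contributes exactly the removed piece.

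Alternatively — and this may be the cleaner route — I would bypass Theorem~\ref{t:var} and prove Theorem~\ref{l:fejer} directly: since the samples come from an ergodic trajectory with $x_0\sim\pi$, each entry $(\wh C_+)_{ij} = \tfrac 1m\sum_{k=0}^{m-1}\psi_i(x_k)K\psi_j(x_k)$ has $\bE[(\wh C_+)_{ij}] = (C_+)_{ij}$ by invariance of $\pi$, so $\bE[\|C_+ - \wh C_+\|_F^2] = \sum_{i,j}\mathrm{Var}((\wh C_+)_{ij})$, and the variance of the ergodic average of $g_{ij}(x_k) = \psi_i(x_k)K\psi_j(x_k)$ equals $\|\tfrac 1m\sum_{k=0}^{m-1}U_{\mathrm{Koop}}^k(g_{ij} - \bE_\pi g_{ij})\|_{L^2(\pi)}^2$ where $U_{\mathrm{Koop}}$ is the Koopman unitary on $L^2(\pi)$; subtracting the mean is exactly applying $Q$, and restricting to $L^2_0(\pi)$ turns $U_{\mathrm{Koop}}$ into $K_0$, giving $\|\tfrac 1m\sum_{k=0}^{m-1}K_0^k Q g_{ij}\|^2$. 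The identical argument with $\psi_{ij}$ in place of $g_{ij}$ handles $\wh C$.

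The main obstacle I anticipate is the bookkeeping in the $C_+$ case: unlike $\wh C$, the estimator $\wh C_+$ mixes two different evaluation points ($x_k$ and $y_k = x_{k+1}$), so the "observable" whose ergodic average is being taken is $g_{ij}(x_k) = \psi_i(x_k)\cdot(\psi_j\circ T)(x_k) = \psi_i(x_k)\psi_j(x_{k+1})$, and one must be careful that $\bE_\pi[g_{ij}] = \langle K\psi_j,\psi_i\rangle = (C_+)_{ij}$ and that the covariance structure of the stationary sequence $(g_{ij}(x_k))_k$ genuinely produces $K_0$-powers and not shifted variants. Reconciling the appearance of $g_{ji}^* = \psi_j\cdot K^*\psi_i$ from Theorem~\ref{t:var} with the single family $g_{ij}$ appearing here — i.e., checking that $\sum_{i,j}\langle p_m(K_0)Qg_{ij}, Qg_{ji}^*\rangle = \sum_{i,j}\langle K_0 p_m(K_0) Q g_{ij}, Q g_{ij}\rangle$ when $K$ is unitary — is the one genuinely non-routine identity and deserves to be spelled out, using $K^* = K^{-1}$ and swapping the roles of $i$ and $j$ in the sum.
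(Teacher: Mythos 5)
Your second (``direct'') route is correct and is a genuinely different argument from the paper's. The paper does not bypass Theorem~\ref{t:var}: it specializes it, rewriting $\sigma_{m,+}^2/m$ and $\sigma_{m,0}^2/m$ through the spectral measure $E$ of the unitary $K_0$ and the Fej\'er kernel, via $\big\|\frac 1m\sum_{k=0}^{m-1}K_0^kf\big\|^2=\<F_m(A_0)f,f\>$ together with the identity $\<E(\Delta)Qg_{ij},Qg_{ji}^*\>=\<KE(\Delta)Qg_{ij},Qg_{ij}\>$, and the latter is exactly where the multiplicativity $K(fg)=Kf\cdot Kg$ of the composition operator enters. Your direct route instead uses that in the deterministic case $\psi_j(x_{k+1})=(K\psi_j)(x_k)$ and $g_{ij}(x_k)=(K^kg_{ij})(x_0)$, so the empirical error is a function of $x_0\sim\pi$ alone and its expectation is literally the $L^2(\pi)$-norm of the centered Birkhoff average; splitting $g_{ij}=Pg_{ij}+Qg_{ij}$ with $Pg_{ij}=(C_+)_{ij}\one$ and $K\one=\one$ yields $\sum_{i,j}\big\|\frac 1m\sum_{k=0}^{m-1}K_0^kQg_{ij}\big\|^2$, and likewise with $\psi_{ij}$ for $\wh C$. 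This buys a proof with no spectral calculus and no covariance bookkeeping; what it uses crucially (and should be stated explicitly) is determinism: for a genuinely stochastic stationary sequence the variance of the ergodic average is \emph{not} $\big\|\frac 1m\sum_k K^k(f-\bE_\pi f)\big\|^2$ --- that discrepancy is precisely why Theorem~\ref{t:var} has the asymmetric $p_m(K_0)$ form.

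In your first route there is a gap in the one step you flag: the identity $\sum_{i,j}\<p_m(K_0)Qg_{ij},Qg_{ji}^*\>=\sum_{i,j}\<K_0p_m(K_0)Qg_{ij},Qg_{ij}\>$ does not follow from $K_0^*=K_0^{-1}$ plus re-indexing the double sum; abstract unitarity gives no relation between $g_{ij}=\psi_i\cdot K\psi_j$ and $g_{ji}^*=\psi_j\cdot K^*\psi_i$, and swapping $i$ and $j$ does not help. The missing ingredient is again multiplicativity: since $K^{-1}$ is the composition operator with $T^{-1}$, one has $K^{-1}g_{ij}=K^{-1}\psi_i\cdot\psi_j=g_{ji}^*$, and since $Q$ commutes with $K$ (because $K\one=K^*\one=\one$), it follows that $Qg_{ji}^*=K_0^{-1}Qg_{ij}$, so the identity holds termwise with no re-indexing at all. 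With that supplied, your first route coincides in substance with the paper's proof, which obtains the same reduction through the spectral-measure computation instead; the same multiplicativity is also what justifies your claim $\bE_+=\sum_{i,j}\|Qg_{ij}\|^2$, since it gives $K\vphi=\sum_j(K\psi_j)^2$.
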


By the mean ergodic theorem (see, e.g., \citet{Krengel85}), we know that for every single $f\in L^2_0(\mu)$ we have that $T_mf := \frac 1m\sum_{k=0}^{m-1}K_0^kf\to 0$ as $m\to\infty$ (in norm). However, this convergence can be arbitrarily slow and is, in addition, qualitatively bounded from above by $1/m$, which follows from \citet{ButzWest71} who proved that  $\|T_mf\| = o(1/m)$ implies $f=0$. Moreover, it will never be uniform\footnote{at least if $\mu$ is non-atomic} in the sense that $\|T_m\|\to 0$ as $m\to\infty$ (see, e.g., \citet{KakuPete81}). 
We may therefore {\em not} assume that $\la=1$ is an isolated simple eigenvalue of $K$, since otherwise $\|T_m\| = \|\frac 1m(I-K_0)^{-1}(I-K_0^m)\|\le \frac 2m\|(I-K_0)^{-1}\|\to 0$ as $m\to\infty$, which is a contradiction.

We thus have to impose assumptions on the functions that $T_m$ is applied to. For a function $f\in L^2(\mu)$ we let
\[
\mu_f(\Delta) := \|E(\Delta)f\|^2,\quad \Delta\in\frakB(\T),
\]
where $E$ denotes the spectral measure of the unitary operator $K$, cf.\ Appendix \ref{s:unitary}. Then $\mu_f$ is a finite measure on $\T$ describing the spectral distribution of $f$. Define the finite set of functions
\[
\calF := \{Q\psi_{ij} : i,j\in [1:N]\}\,\cup\,\{Qg_{ij} : i,j\in [1:N]\},
\]
the arcs $S_\theta := \{e^{2\pi it} : -\theta\le t\le\theta\}$, $\theta\in (0,1/2)$, and the constant
\[
M := \frac{8(1+\|C^{-1}\|_F^2\|C_+\|_F^2)^2}{\|C_+\|_F^2}\cdot\max\{\bE_0,\bE_+\}.
\]

\begin{thm}\label{thm:main_quadratic}
Assume that $K$ is unitary and suppose that there exist $\alpha\in (1,2)$, $\kappa\ge 0$, and $\theta\in (0,1/2)$ such that each $f\in\calF$ satisfies
\begin{align}\label{e:thin_meas}
\mu_f(S_{\gamma})\le\kappa\cdot\mu_f(S_\theta)\cdot\gamma^\alpha,\quad \gamma\in (0,\theta].
\end{align}
Then for $\veps\in (0,2)$ we have
\begin{align*}
\bP\big(\|C^{-1}C_+ - \wh C^{-1}&\wh C_+\|_F > \veps\big)\le\frac{C(\alpha,\kappa,\theta)M}{m^\alpha\veps^2}.
\end{align*}
with 
$C(\alpha,\kappa,\theta)$ 
provided in~\eqref{e:const}. If $\kappa=0$, then
$\bP\big(\|C^{-1}C_+ - \wh C^{-1} \wh C_+\|_F > \veps\big)
\le\frac{M}{(1-\cos\theta)\cdot m^2\veps^2}$.
\end{thm}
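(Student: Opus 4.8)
The plan is to reduce the bound on the Koopman learning error $\|C^{-1}C_+ - \wh C^{-1}\wh C_+\|_F$ to bounds on the individual errors $\|C - \wh C\|_F$ and $\|C_+ - \wh C_+\|_F$, and then to control the latter via the variance representation of Theorem~\ref{l:fejer} together with the spectral thinness hypothesis~\eqref{e:thin_meas}. First I would establish (or invoke, presumably from Lemma~\ref{l:prob_absch}) a deterministic perturbation estimate of the form $\|C^{-1}C_+ - \wh C^{-1}\wh C_+\|_F \le a\|C - \wh C\|_F + b\|C_+ - \wh C_+\|_F$ valid on the event that $\wh C$ is close enough to $C$, where $a,b$ depend on $\|C^{-1}\|_F$ and $\|C_+\|_F$; this is where the combination $(1+\|C^{-1}\|_F^2\|C_+\|_F^2)$ and the factor $8$ in the definition of $M$ come from. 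Squaring, taking expectations, and applying Markov's inequality then converts a bound on $\bE[\|C-\wh C\|_F^2] + \bE[\|C_+ - \wh C_+\|_F^2]$ into the desired tail bound; the factor $\max\{\bE_0,\bE_+\}$ in $M$ signals that both variances are bounded by a common structural constant times an ergodic-averaging factor.

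The core estimate is therefore to show that under~\eqref{e:thin_meas},
\[
\bigg\|\frac 1m\sum_{k=0}^{m-1}K_0^k f\bigg\|^2 \le \frac{C(\alpha,\kappa,\theta)}{m^\alpha}\cdot\frac{\mu_f(S_\theta)}{\text{(normalization)}}
\]
for each $f\in\calF$, so that summing over the (finitely many) $i,j$ and using $\mu_f(\T) \le \bE_0$ or $\bE_+$ (since $\|Q\psi_{ij}\|^2 \le \|\psi_{ij}\|^2$ and summing these gives $\|\vphi\|^2$, likewise for the $g_{ij}$) yields Theorem~\ref{l:fejer}'s right-hand sides bounded by $C(\alpha,\kappa,\theta)\max\{\bE_0,\bE_+\}/m^\alpha$. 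To prove the core estimate I would use the spectral calculus for the unitary $K$: writing $T_m f = \int_{\T} F_m(z)\,dE(z)f$ with the Fejér-type kernel $F_m(e^{2\pi i t}) = \frac 1m\sum_{k=0}^{m-1}e^{2\pi i k t} = \frac 1m\cdot\frac{e^{2\pi i m t}-1}{e^{2\pi i t}-1}$, one gets $\|T_m f\|^2 = \int_{\T}|F_m|^2\,d\mu_f$. Since $|F_m(e^{2\pi it})|^2 = \frac 1{m^2}\cdot\frac{\sin^2(\pi m t)}{\sin^2(\pi t)}$ and $f\in L^2_0(\mu)$ means $\mu_f(\{1\})=0$, the mass of $\mu_f$ near $z=1$ is what governs the decay: split the integral into $S_{1/m}$ (or $S_{c/m}$) where one bounds $|F_m|^2\le 1$ and uses $\mu_f(S_{1/m}) \le \kappa\mu_f(S_\theta)m^{-\alpha}$, and the complementary region $S_\theta\setminus S_{1/m}$ where $|F_m(e^{2\pi it})|^2 \le \frac 1{m^2\cdot 4 t^2}$ (using $\sin(\pi t)\ge 2t$ on $[0,1/2]$) so that an integration by parts / layer-cake decomposition against the thin-measure bound $\mu_f(S_\gamma)\le\kappa\mu_f(S_\theta)\gamma^\alpha$ produces $\int_{1/m}^{\theta} \frac{1}{m^2 t^2}\,d\mu_f(t) \lesssim \frac{\kappa\mu_f(S_\theta)}{m^2}\int_{1/m}^{\theta} t^{\alpha-3}\,dt \lesssim \kappa\mu_f(S_\theta) m^{-\alpha}$ precisely because $\alpha<2$ makes $t^{\alpha-3}$ integrable-at-$\infty$-scale after the substitution. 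Finally the region outside $S_\theta$ contributes at most $\frac{1}{m^2(1-\cos\theta)}\mu_f(\T)$, which is $O(m^{-2})$ and hence absorbed (and, when $\kappa=0$, is the only surviving term, giving the sharper stated bound with denominator $(1-\cos\theta)m^2\veps^2$).

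The main obstacle I anticipate is the layer-cake / integration-by-parts step on the annular region $S_\theta\setminus S_{c/m}$: one must convert the pointwise decay of the Fejér kernel into a bound on $\int |F_m|^2\,d\mu_f$ using only the distribution-function hypothesis~\eqref{e:thin_meas}, and getting the constant $C(\alpha,\kappa,\theta)$ explicit (as promised by the reference to~\eqref{e:const}) requires carefully tracking the Riemann–Stieltjes integration by parts $\int_{c/m}^\theta g(t)\,d\mu_f(S_t) = [g\,\mu_f(S_\cdot)]_{c/m}^\theta - \int_{c/m}^\theta g'(t)\mu_f(S_t)\,dt$ with $g(t)\asymp 1/(m^2 t^2)$, so that the boundary term at $c/m$ contributes $\asymp m^{-2}\cdot(c/m)^{-2}\cdot\kappa\mu_f(S_\theta)(c/m)^\alpha = \kappa\mu_f(S_\theta)c^{\alpha-2}m^{-\alpha}$ and the integral term contributes the $\int t^{-3}\cdot t^\alpha\,dt$ piece; optimizing or simply fixing the cutoff constant $c$ then yields $C(\alpha,\kappa,\theta)$. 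A secondary subtlety is making sure the perturbation bound from Lemma~\ref{l:prob_absch} is genuinely applicable for all $\veps\in(0,2)$ rather than only for small $\veps$ — presumably this is handled by noting that $\|C^{-1}C_+\|_F, \|\wh C^{-1}\wh C_+\|_F \le 1$ (as $K_\bV$ and $\wh K_m$ are compressions of contractions) so the error is automatically at most $2$, making the claim vacuous or trivial once $\veps\ge 2$ and letting one restrict to the regime where the linearization is valid.
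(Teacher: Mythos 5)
Your proposal is essentially correct and shares the paper's overall architecture --- reduce via Lemma~\ref{l:prob_absch} to tail bounds on $\|C-\wh C\|_F$ and $\|C_+-\wh C_+\|_F$, represent the variances through Theorem~\ref{l:fejer} as $\sum_{i,j}\|\frac 1m\sum_k K_0^k Qf\|^2$, bound each ergodic average using the spectral hypothesis~\eqref{e:thin_meas}, and finish with Markov --- but it differs in the one step that carries the superlinear rate. The paper splits each $f\in\calF$ spectrally as $f=E(\T\setminus S_\theta)f+E(S_\theta)f$, bounds the outer part by the elementary kernel estimate giving the $2/((1-\cos\theta)m^2)$ term, and for the inner part verifies $\mu_{f_2}(S_\gamma)\le\kappa\|f_2\|^2\gamma^\alpha$ and then \emph{cites} Theorem~2 of \citet{KachPodv18} to get $\|\frac 1m\sum_k K_0^k f_2\|^2\le \kappa C(\alpha)\|f_2\|^2 m^{-\alpha}$ with $C(\alpha)=\frac{3}{(\alpha-1)(2-\alpha)}$; you instead reprove this rate from scratch by splitting the spectral integral of $|{(1-z^m)}/{(m(1-z))}|^2$ over $S_{c/m}$, $S_\theta\setminus S_{c/m}$ and $\T\setminus S_\theta$ and doing a Riemann--Stieltjes integration by parts against the distribution bound, which is a sound and self-contained substitute for the cited Fej\'er-sum result (indeed your direct computation avoids the $(\alpha-1)^{-1}$ blow-up of the cited constant, since here $\alpha>1$ only enters through $2-\alpha$ and the absorption of the $m^{-2}$ terms). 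The trade-off is that your argument produces a different explicit constant than~\eqref{e:const}, so to claim the theorem verbatim you would either have to check that your constant is dominated by $C(\alpha,\kappa,\theta)$ in all regimes or simply restate the bound with your constant; the $\kappa=0$ case and the summation over $\calF$ using $\sum_{i,j}\|Q\psi_{ij}\|^2=\bE_0$, $\sum_{i,j}\|Qg_{ij}\|^2\le\bE_+$ go through as you describe. Two small corrections: the union-bound structure of Lemma~\ref{l:prob_absch} is what handles the event where $\wh C$ is far from $C$, so you should apply it as stated rather than squaring a pathwise inequality and taking expectations; and the restriction $\veps\in(0,2)$ is not about the error being trivially bounded by $2$ (in Frobenius norm it need not be), but is used to bound $\tau=2\|C^{-1}\|_F\|C_+\|_F+\veps\le 2(1+\|C^{-1}\|_F\|C_+\|_F)$, which is exactly where the factor $8(1+\|C^{-1}\|_F^2\|C_+\|_F^2)^2/\|C_+\|_F^2$ in $M$ originates.
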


Obviously, the factor $\mu_f(S_\theta)$ in \eqref{e:thin_meas} is due to normalization purposes. Hence, \eqref{e:thin_meas} means qualitatively that $\gamma^{-\alpha}\mu_f(S_\gamma) = O(1)$ (as $\gamma\to 0$). The following corollary shows that in the special case where $K$ has discrete spectrum, condition \eqref{e:thin_meas} is satisfied if the coefficients of $f$ have a certain decay.

\begin{cor}\label{c:thin_meas}
Assume that $K_0$ is of the form $K_0 = \sum_{n\in\N}e^{2\pi it_n}\<\,\cdot\,,f_n\>f_n$, where $t_n\in [-1/2,1/2)$, $n\in\N$, and $(f_n)$ is an orthonormal basis of $L_0^2(\mu)$. Then an $f\in L^2_0(\mu)$ satisfies \eqref{e:thin_meas} if $(\<f,f_n\>)_{n\in\N}\in\ell^2_w(\N)$, where $w_n = |t_n|^{-\alpha}$.
\end{cor}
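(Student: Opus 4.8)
The plan is to make the definition of the spectral measure $\mu_f$ completely explicit in the discrete-spectrum case, which reduces \eqref{e:thin_meas} to an elementary inequality for weighted $\ell^2$-sequences. Since $K$ is unitary with $K\one=\one$ and $K_0=\sum_{n\in\N}e^{2\pi it_n}\<\,\cdot\,,f_n\>f_n$ on $L^2_0(\mu)$, its spectral measure $E$ satisfies, for $f\in L^2_0(\mu)$ and $\Delta\in\frakB(\T)$, the identity $E(\Delta)f=\sum_{n:\,e^{2\pi it_n}\in\Delta}\<f,f_n\>f_n$ (the $\one$-component of $f$ vanishes). Writing $c_n:=\<f,f_n\>$, this gives
\[
\mu_f(\Delta)=\|E(\Delta)f\|^2=\sum_{n:\,e^{2\pi it_n}\in\Delta}|c_n|^2 .
\]
The first step is then to identify the arcs: for $\gamma\in(0,1/2)$ one has $e^{2\pi it_n}\in S_\gamma$ if and only if $|t_n|\le\gamma$, because $t_n\in[-1/2,1/2)$ together with $\gamma<1/2$ excludes the wrap-around representatives $t_n\pm 1$. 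Hence $\mu_f(S_\gamma)=\sum_{n:\,|t_n|\le\gamma}|c_n|^2$ for every $\gamma\in(0,1/2)$.

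The second step is the one-line estimate that carries the whole argument. Assume $(c_n)\in\ell^2_w(\N)$ with $w_n=|t_n|^{-\alpha}$ and put $A:=\sum_{n\in\N}|c_n|^2|t_n|^{-\alpha}<\infty$; note that $t_n=0$ would force $c_n=0$ by finiteness of $A$ (consistent with ergodicity, under which $\la=1$ is a simple eigenvalue of $K$, so no such $n$ occurs anyway), whence the sum over $\{n:|t_n|\le\gamma\}$ only involves $t_n\ne 0$. Then for each $\gamma\in(0,1/2)$,
\[
\mu_f(S_\gamma)=\sum_{n:\,0<|t_n|\le\gamma}|c_n|^2|t_n|^{-\alpha}\cdot|t_n|^\alpha\le\gamma^\alpha\sum_{n:\,0<|t_n|\le\gamma}|c_n|^2|t_n|^{-\alpha}\le A\,\gamma^\alpha .
\]

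The final step is to absorb the normalization factor $\mu_f(S_\theta)$ appearing in \eqref{e:thin_meas}. Fix any $\theta\in(0,1/2)$. If $\mu_f(S_\theta)=0$, then $\mu_f(S_\gamma)=0$ for all $\gamma\in(0,\theta]$ since $S_\gamma\subseteq S_\theta$, and \eqref{e:thin_meas} holds trivially (e.g.\ with $\kappa=0$). Otherwise set $\kappa:=A/\mu_f(S_\theta)\ge 0$; the previous display yields $\mu_f(S_\gamma)\le A\gamma^\alpha=\kappa\cdot\mu_f(S_\theta)\cdot\gamma^\alpha$ for all $\gamma\in(0,\theta]$, which is precisely \eqref{e:thin_meas}. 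There is no genuine obstacle here: once $\mu_f(S_\gamma)=\sum_{|t_n|\le\gamma}|c_n|^2$ is established, the weight $|t_n|^{-\alpha}$ is exactly what is needed to trade for the factor $\gamma^\alpha$. (If one needs a single pair $(\kappa,\theta)$ that works simultaneously for all $f\in\calF$, as required to invoke Theorem~\ref{thm:main_quadratic}, one fixes a common $\theta$ and takes $\kappa:=\max_{f\in\calF}\kappa_f$ over the finite set $\calF$.)
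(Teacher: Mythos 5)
Your proposal is correct and follows essentially the same route as the paper: the whole argument is the single weighted estimate $\mu_f(S_\gamma)=\sum_{|t_n|\le\gamma}|\<f,f_n\>|^2\le\gamma^\alpha\sum_n|\<f,f_n\>|^2|t_n|^{-\alpha}$, after which the normalization factor $\mu_f(S_\theta)$ is absorbed into $\kappa$. Your treatment of that last step is in fact slightly more careful than the paper's (which takes $\kappa=S/\|f\|^2$, implicitly letting $\theta$ exhaust the circle, whereas your $\kappa=A/\mu_f(S_\theta)$ for a fixed $\theta$, with the degenerate case $\mu_f(S_\theta)=0$ and the $t_n=0$ terms handled explicitly, is the cleaner formulation); the additional remark on taking a common $(\kappa,\theta)$ over the finite set $\calF$ is also apt.
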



Since condition \eqref{e:thin_meas} seems very technical, we provide a simple example below.

\begin{ex}\label{ex:thin_meas}
Let $\calX = \T$ be the unit circle in $\C$ and consider the map $T : \calX\to\calX$, defined by $T(z) = cz$, where $c= e^{2\pi it_0}$, $t_0\in\R\backslash\Q$. Then the arc length measure $\mu$ on $\T$ is ergodic w.r.t.\ $T$, and the Koopman operator associated with $T$ satisfies
$Kf = \sum_{n\in\Z}c^n\<f,z^n\>z^n$,
where $\<\cdot\,,\cdot\>$ denotes the scalar product in $L^2(\mu)$. Hence, $K$ is unitary with eigenvalues $c^n$ and corresponding eigenfunctions $f_n(z) = z^n$. In particular, $K_0f = \sum_{n\in\Z^*}c^n\<f,z^n\>z^n$, where $\Z^* = \Z\backslash\{0\}$. Therefore, Corollary \ref{c:thin_meas} implies that functions $f\in L^2_0(\mu) = L^2(\T)\ominus\linspan\{\one\}$ whose sequence of Fourier coefficients $(\<f,z^n\>)_{n\in\Z}$ is contained in $\ell^2_w$, where $w_n = |(nt_0+\frac 12)\mod 1 - \frac 12|^{-\alpha}$, satisfy \eqref{e:thin_meas}. In particular, this is satisfied for trigonometric polynomials.
\end{ex}

\paragraph{Case~(S2): i.i.d.~sampling. Bounds on the EDMD estimation error}

In this subsection, we let $\mu = \nu$ and draw i.i.d.\ data samples $(x_k,y_k)$ from $d\mu_{0} := \rho(\cdot,dy)\,d\mu$, according to case {\bf (S2)}. 
Our main result in this case is the following theorem.

\begin{thm}\label{t:main1}
Assume that $C_+\neq 0$. Let $\veps>0$ and set $\sigma = 2\|C^{-1}\|_F\|C_+\|_F + \veps$. Then
\begin{align*}
\bP\big(\|C^{-1}C_+ &- \wh C^{-1}\wh C_+\|_F > \veps\big)\le\frac{\sigma^2}{m\veps^2}\left[\big(L\|C_+\|_F^{-2} + \|C^{-1}\|_F^2\big)\|\vphi\|^2 - 2\right].
\end{align*}
If, in addition, $\vphi\in L^\infty(\mu)$, then with $\tau = \sigma\|\vphi\|_\infty$ we have
\begin{align*}
\bP\big(\|C^{-1}C_+ - \wh C^{-1}\wh C_+\|_F > \veps\big)\le 2\exp\left(-\frac{m\veps^2\|C_+\|_F^2}{2\tau^2(1+L)^2}\right) + 2\exp\left(-\frac{m\veps^2}{8\tau^2\|C^{-1}\|_F^2}\right),
\end{align*}
\end{thm}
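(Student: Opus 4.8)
\textbf{Proof strategy for Theorem~\ref{t:main1}.} The plan is to follow the three-step scheme announced at the start of Section~\ref{sec:proofs}, specialized to i.i.d.\ sampling. First I would establish the relevant variance/moment bounds for the empirical estimators $\wh C$ and $\wh C_+$. Since the samples $(x_k,y_k)$ are i.i.d., the entries of $\wh C - C$ and $\wh C_+ - C_+$ are empirical means of i.i.d.\ zero-mean random variables, so $\bE[\|\wh C - C\|_F^2] = \tfrac1m\operatorname{Var}(\text{single sample})$ and similarly for $\wh C_+$. The key computation is to bound $\bE[\|\Psi(x_0)\Psi(x_0)^\top\|_F^2] = \bE[\vphi(x_0)^2] = \|\vphi\|^2$ and, using the tower property together with~\eqref{e:koopman} and~\eqref{e:nu}, $\bE[\|\Psi(x_0)\Psi(y_0)^\top\|_F^2] = \bE[\vphi(x_0)\vphi(y_0)] = \<\vphi, K\vphi\> \le L\|\vphi\|^2$; subtracting $\|C\|_F^2$ and $\|C_+\|_F^2$ respectively gives the variances. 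This should produce bounds of the shape $\bE[\|\wh C - C\|_F^2]\le\tfrac1m(\|\vphi\|^2 - \|C\|_F^2)$ and $\bE[\|\wh C_+ - C_+\|_F^2]\le\tfrac1m(L\|\vphi\|^2 - \|C_+\|_F^2)$.

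Second, I would feed these into the abstract perturbation lemma relating $\|\wh K_m - K_\bV\|_F = \|\wh C^{-1}\wh C_+ - C^{-1}C_+\|_F$ to $\|\wh C - C\|_F$ and $\|\wh C_+ - C_+\|_F$; this is precisely the content of \Cref{l:prob_absch}/\Cref{l:mat_repr} alluded to in the ergodic case, giving an event of the form $\{\|\wh C - C\|_F \le t_1\}\cap\{\|\wh C_+ - C_+\|_F\le t_2\}$ implies $\|\wh K_m - K_\bV\|_F \le \veps$ for suitable $t_1,t_2$ depending on $\|C^{-1}\|_F$, $\|C_+\|_F$ and $\veps$; the definition $\sigma = 2\|C^{-1}\|_F\|C_+\|_F + \veps$ is exactly the normalizing factor that appears there. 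For the first (polynomial) bound, I then apply Markov's inequality to $\|\wh C - C\|_F^2$ and $\|\wh C_+ - C_+\|_F^2$, combine via a union bound over the complementary events, and collect terms; the $-2$ in the bracket comes from the two subtracted Frobenius norms $\|C\|_F^2, \|C_+\|_F^2$ after the algebra is pushed through the normalization by $\|C_+\|_F^{-2}$.

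Third, for the subgaussian bound I would use the hypothesis $\vphi\in L^\infty(\mu)$ to get a uniform almost-sure bound on the summands: $\|\Psi(x_k)\Psi(x_k)^\top\|_F = \vphi(x_k)\le\|\vphi\|_\infty$ and $\|\Psi(x_k)\Psi(y_k)^\top\|_F \le \|\vphi\|_\infty$ (by Cauchy--Schwarz, $|\psi_i(x)\psi_j(y)|\le\tfrac12(\psi_i(x)^2+\psi_j(y)^2)$ summed, or directly $\sqrt{\vphi(x)\vphi(y)}\le\|\vphi\|_\infty$). Then $\wh C - C$ is an average of i.i.d.\ bounded (in Frobenius norm by $2\|\vphi\|_\infty$) zero-mean matrix-valued random variables, and likewise $\wh C_+ - C_+$; applying a vector/Hilbert-space-valued Hoeffding inequality (treating $\R^{N\times N}$ with the Frobenius norm as a Hilbert space) yields $\bP(\|\wh C - C\|_F > t)\le 2\exp(-mt^2/(8\|\vphi\|_\infty^2))$ and similarly for $\wh C_+$. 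Choosing the split of $\veps$ between the two events so that the thresholds $t_1,t_2$ from step two are $\propto \veps/\sigma$ (hence the appearance of $\tau = \sigma\|\vphi\|_\infty$), and using the explicit form of $t_1,t_2$ from \Cref{l:prob_absch} (which introduces the $(1+L)$ factor through the bound on $\wh C_+$ relative to $\wh C_+ - C_+$ and $C_+$), gives the two exponential terms. The main obstacle I anticipate is getting the constants in step two exactly right: tracking how an additive error $\veps$ in the product $\wh C^{-1}\wh C_+$ decomposes into admissible errors in $\wh C$ and $\wh C_+$ requires carefully controlling $\|\wh C^{-1}\|_F$ in terms of $\|C^{-1}\|_F$ on the good event (a standard Neumann-series argument, valid once $\|\wh C - C\|_F$ is small relative to $\|C^{-1}\|_F^{-1}$), and ensuring the resulting thresholds are exactly those that produce the stated $\sigma$, $\tau$, $(1+L)^2$ and the bracketed expression — this bookkeeping, rather than any conceptual difficulty, is where the proof must be done with care.
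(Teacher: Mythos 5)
Your overall route is the same as the paper's: compute the i.i.d.\ variances $\bE[\|\wh C-C\|_F^2]=\tfrac1m(\|\vphi\|^2-\|C\|_F^2)$ and $\bE[\|\wh C_+-C_+\|_F^2]=\tfrac1m(\<K\vphi,\vphi\>-\|C_+\|_F^2)$ (the cross terms vanish by independence), apply Markov's inequality resp.\ a Hilbert-space-valued Hoeffding inequality, and combine the two events through Lemma~\ref{l:prob_absch}, whose thresholds $\tfrac{\veps}{\sigma}\|C_+\|_F$ and $\tfrac{\veps}{\sigma}\|C^{-1}\|_F^{-1}$ produce exactly the stated $\sigma$ and $\tau$. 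Your explanation of the $-2$ is essentially right, provided you also note $\|C^{-1}\|_F^2\|C\|_F^2\ge\|I\|_F^2\ge 1$ and $\<K\vphi,\vphi\>\le L\|\vphi\|^2$ from $\|K\|\le L$.

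Two points in your Hoeffding step need repair. First, the almost sure bound $\vphi(y_k)\le\|\vphi\|_\infty$ is not automatic: $\|\vphi\|_\infty$ is an essential supremum with respect to $\mu$, while $y_k\sim\rho(x_k,\cdot)$; the paper closes this via \eqref{e:nu}, namely $\bP(\vphi(y_k)>\|\vphi\|_\infty)=\int\rho\big(x,\vphi^{-1}((\|\vphi\|_\infty,\infty))\big)\,d\mu(x)\le L^2\,\mu\big(\vphi^{-1}((\|\vphi\|_\infty,\infty))\big)=0$. Second, your attribution of the factor $(1+L)^2$ is wrong: Lemma~\ref{l:prob_absch} introduces no such factor. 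In the paper it arises when bounding the Hoeffding summands $D_k=C_+-\Psi(x_k)\Psi(y_k)^\top$ almost surely by $\|C_+\|_F+\|\vphi\|_\infty\le(1+L)\|\vphi\|_\infty$, using $\|C_+\|_F\le L\|\vphi\|_\infty$ (again from $\|K\|\le L$). Your claimed uniform bound $2\|\vphi\|_\infty$ on these summands presupposes $\|C_+\|_F\le\|\vphi\|_\infty$, which you did not argue (it does follow from Jensen once the a.s.\ bound on $\vphi(x_k)\vphi(y_k)$ is established), and as written it would give the constant $8\tau^2$ in the first exponential rather than $2\tau^2(1+L)^2$; that variant still implies the stated inequality only because \eqref{e:nu} with $A=\calX$ forces $L\ge1$, an observation you would have to add explicitly. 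With these two repairs the argument goes through exactly as in the paper (Proposition~\ref{p:prob_est1} plus Lemma~\ref{l:prob_absch}).
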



\section{Numerical Examples}\label{sec:numerics}
In this part, we illustrate the deduced error bounds by means of two highly complex examples. The first is a protein folding simulation of a 35-amino acid chain, and the second is a nonlinear chaotic partial differential equation modeling flame propagation.

\paragraph{Molecular Dynamics Simulation}
We apply the Koopman approach to analyze the folding kinetics of the Fip35 WW-domain. This 35-amino acid protein has been used as a benchmarking system in many previous studies due to its small size and fast folding time scale of about ten micro-seconds. We use the $1.1$-milli-second molecular dynamics (MD) simulation data set published by D.E.\ Shaw research in~\citet{LINDORFF2011}. The underlying dynamics are a modified version of Hamiltonian dynamics, defined by a many-body potential energy function $V$ plus a thermostat, which ensures that the position space dynamics are effectively stochastic and reversibly sample the Boltzmann distribution $\exp(-\beta V(x))\,\mathrm{d}x$, enabling estimation of statistical averages via ergodic sampling. The complete data set comprises about $5.6 \cdot 10^6$ data points. Using straightforward parallelization on a small CPU cluster, the experiments below can be carried out in less than an hour.

We build our Koopman model on a 528-dimensional space of inter-atomic distances (closest heavy-atom inter-residue distances), which is a standard featurization for molecular simulation data sets. We use the {\rm mdtraj} library~\cite{mcgibbon15} to prepare the featurized data set. We employ a basis set of random Fourier features (RFFs)~\cite{Rahimi2007}, that is, complex plane waves of the forms
$\psi_j(x) = \exp(i\omega_j^\top x)$,
where $\omega_j$ are frequencies drawn from the spectral measure associated to a Gaussian radial basis function kernel. These features were shown to provide a fairly automatic way of generating an expressive dictionary in~\citet{nuske2023}. The Gaussian bandwidth parameter $\sigma$, the number of random features $p$, and the lag time $t$ for Koopman learning are tuned using the VAMP score metric~\cite{Wu2020}, see once again~\citet{nuske2023} for details. As shown in Figure~\ref{fig:fip35_md}, $\sigma = 30$, $p = 500$ and $t = 1\mu\mathrm{s}$ emerge as suitable parameters to accurately estimate the folding time scale. The leading eigenvectors of the Koopman model can be used to identify the folded and unfolded states from the simulation data, as illustrated in Figure~\ref{fig:fip35_md} C. The two leading eigenvectors of the Koopman model are transformed into membership functions $\chi_{0,1}$ by the PCCA method~\cite{Deuflhard2005}. A value close to one of these membership functions indicates that the system is in either the folded or unfolded state, as illustrated by representative structures on the left.

\begin{figure}[htb]
    \centering
    \includegraphics[width=0.38\textwidth]{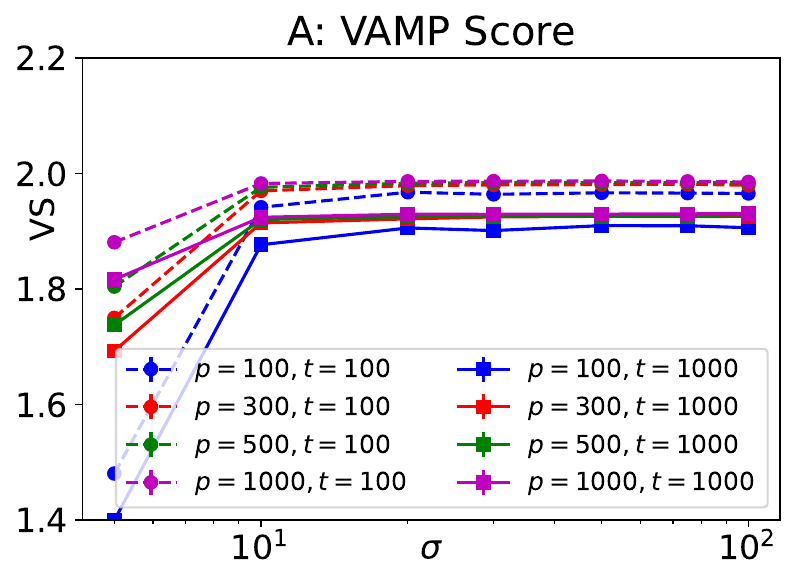}
    \hspace*{1cm}
    \includegraphics[width=0.38\textwidth]{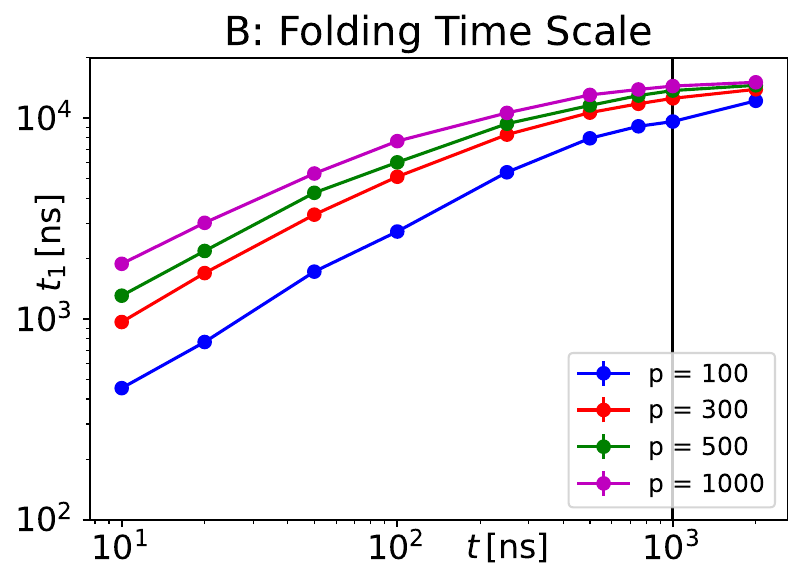}
    \caption{A: VAMP score metric as a function of kernel bandwidth $\sigma$ for different feature sizes $p$ and lag times $t$ (in  $\rm [ns]$). Regardless of the lag time, $p = 500$ and $\sigma = 30$ emerge as optimal. B: Estimated folding timescale for $\sigma = 30$ and different feature sizes $p$ as a function of lag time. It can be seen that convergence is achieved for $t = 1\,\mu\mathrm{s}$ and $p \geq 500$.}
    \label{fig:fip35_md}
\end{figure}

\begin{figure}[htb]
    \centering
    \includegraphics[width=.5\textwidth]{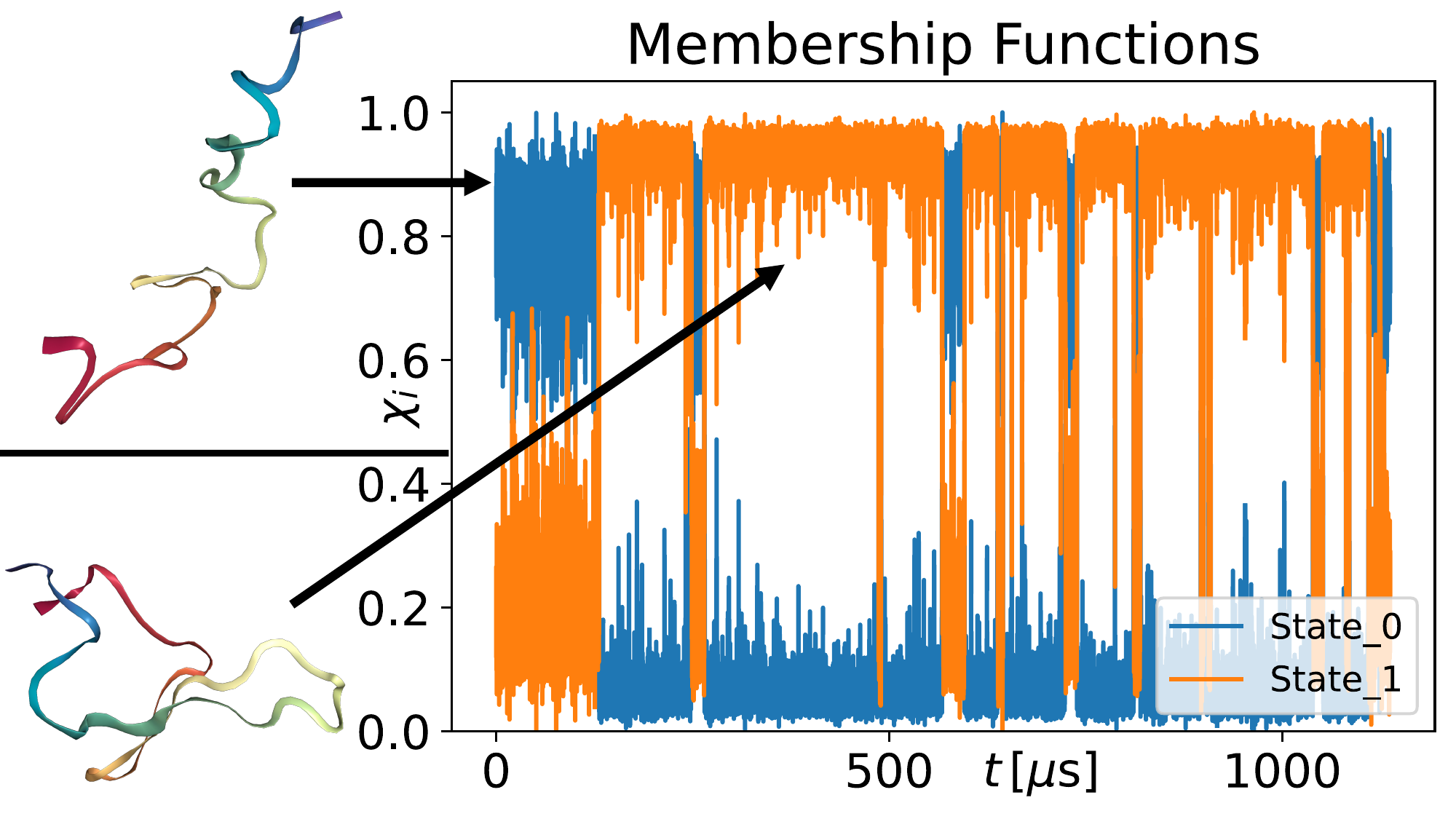}
    \caption{Identification of the folded state of Fip35 based on the Koopman model.}
    \label{fig:fip35_pcca}
\end{figure}

As the MD simulation is reversible, we can apply the error bounds in Eq.~\eqref{e:sigma_est}. Computing the exact asymptotic variances $\sigma_{m,0}^2$ and $\sigma_{m,+}^2$ via Theorem~\ref{t:var} and Eq.~\eqref{e:sigma_est} requires access to the true eigenvalues and Galerkin matrices $C$, $C_+$. Since these are not available, we estimate them by learning a reference model on all available data points. We then compute the error relative to this reference model if the Galerkin matrices are estimated using fewer data points, with $m$ ranging from $m = 2000$ to $m = 2\cdot 10^6$. 
In Figure~\ref{fig:spoiler} (left), we show 
the root mean square errors $[\bE[\|\wh C - C\|^2_F]]^{1/2}$ (red) and the 90\% error quantile 
(blue), estimated by our theoretical bounds in Eq.~\eqref{e:sigma_est} (squares) vs.~data-based estimates relative to the reference model (circles) using~$50$~different sub-samples of the data set, each of size~$m$. Throughout, 
solid lines refer to~$C$, while dashed lines refer to $C_+$, but the results are almost indistinguishable. 
The black line indicates a qualitative decay of the form $cm^{-1/2}$, where the pre-factor $c$ is the average ratio of the data-based RMSE over $m^{-1/2}$, for all values of $m$ greater than the folding time scale. The values of $m$ on the horizontal axis are normalized against the number of data points $m_f$ required to reach the folding time scale $10\,\mu\mathrm{s}$. Compared to the theoretical results, the actual error is about an order or magnitude smaller. We notice however, that after an initial period of about the length of the folding time scale,  the asymptotic decay of the error is well-described by our estimates (see the black line in the left plot of Figure~\ref{fig:spoiler}).

\paragraph{Nonlinear PDE.}
In our second example, we study the Kuramoto-Sivashinsky equation in two space dimensions, which is a widely-studied deterministic PDE modeling the dynamics of chaotic flame front propagation:
\begin{equation}\label{eq:KS2D}
    \partial_t x + \nabla^2 x + \nabla^4 x + |\nabla x|^2 = 0.
\end{equation}
The system state $x: \Omega \times [0,\infty)\rightarrow \R$  (cf.\ Figure~\ref{fig:KS2D} left) 
depends on both space and time, and we consider a rectangular domain $\Omega = [-30\pi,30\pi]^2$ with periodic boundary conditions.

\begin{figure}[htb]
    \centering
    \includegraphics[width=.3\columnwidth]{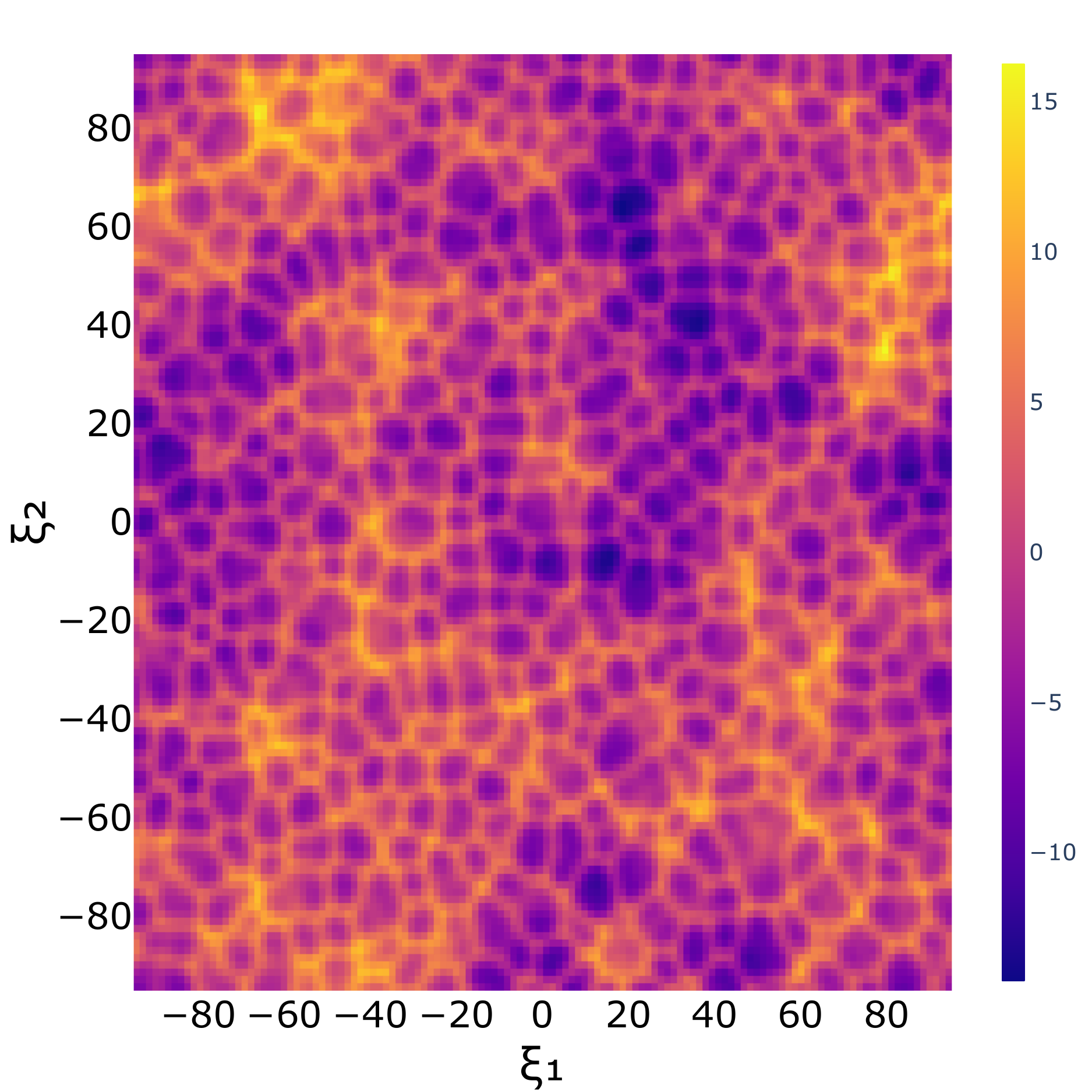}
    \hspace*{2cm}
    \includegraphics[width=.3\columnwidth]{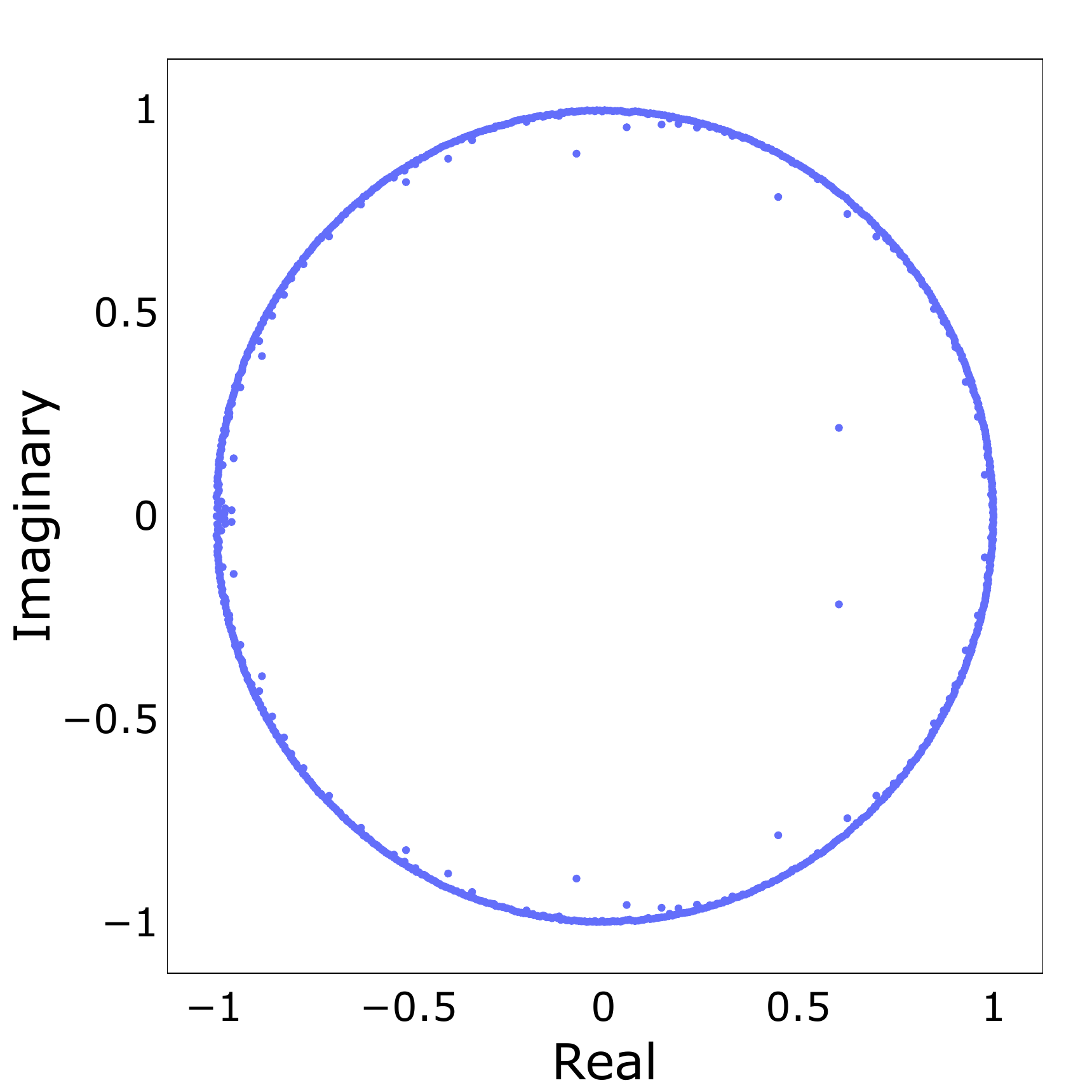}
    \caption{Left: Exemplary snapshot of a state~$x(\xi,t)$ on the attractor governed by the Kuramoto-Sivashinsky equation~\eqref{eq:KS2D}. Right: Eigenvalues of the EDMD matrix $\wh K_M$}
    \label{fig:KS2D}
\end{figure}

The numerical simulation is realized using the open-source code \textit{shenfun} \cite{Mor18}. For the used domain size, the system exhibits chaotic dynamics, which means that we can use ergodic sampling from a single long trajectory that covers the entire chaotic attractor. 
In our experiments, we collect a total of $M=100,000$ samples with a time step of $h = 0.01$. 
The spectral element discretization in space yields snapshots of dimension $128 \times 128 = 16,384$, which we reduce to $64 \times 64 = 4,096$ for computational reasons. As this is still prohibitively large for classical dictionaries, we rely on the kernel variant of EDMD \cite{WRK16}. We choose a dictionary $\calD$ consisting of $N=1000$ radial basis functions with a twice differentiable Matérn kernel (of order $\nu=2.5$), centered at $N$ spatially equidistant grid points, which means that our Koopman approximation is of dimension $\wh K_m\in\R^{N\times N}$. To estimate $\wh K_m$ we choose a lag time of $\Delta t = 1$, and as we do not have access to the true Koopman compression $K_\bV$, we study the behavior of the relative error between $\wh K_m$ and $\wh K_M$ for increasing $m$.
Figure \ref{fig:KS2D} (right) shows the eigenvalue spectrum of the matrix $\wh K_M$ for $N=1000$ radial basis functions and all $M$ samples. As usual 
for chaotic systems (cf.\ \citet{RMB+09}) and proven in Section~\ref{sec:proofs}, the eigenvalues are distributed 
around the unit circle. Consequently, 
the eigenvalue $\lambda=1$ is \emph{not} isolated and thus, according to Theorem \ref{thm:main_quadratic}, the convergence rate with respect to $m$ should range somewhere between one and two. When taking a closer look at Figure~\ref{fig:spoiler} (right), this is exactly the behavior that we observe.

\section{Conclusion}
We provided novel error bounds on EDMD for both ergodic and i.i.d.\ sampling for discrete- and continuous-time nonlinear dynamical systems under non-restrictive conditions. 
For i.i.d.\ sampling, we established convergence at an 
exponential rate. 
Contrary, 
the convergence of EDMD with ergodic sampling depends on whether the underlying system is stochastic or deterministic. While the error decays at a linear rate in the first case, we show that a broad class of deterministic systems exhibits superlinear convergence. Our theoretical results are underpinned by 
numerical simulations of highly-complex 
systems such as the folding kinetics of a protein in molecular dynamics and the chaotic nonlinear Kuramoto-Sivashinsky equation. In future research, we investigate suitable choices of the dictionary to speed up the convergence according to the newly identified conditions of Theorem~\ref{thm:main_quadratic} 
in the deterministic case.

\section{Acknowlegdment}
We thank {\em D.E. Shaw Research} for providing the Fip35 simulation data set.

\section{Conclusion}
We provided novel error bounds on EDMD for both ergodic and i.i.d.\ sampling for discrete- and continuous-time nonlinear dynamical systems under non-restrictive conditions. For i.i.d.\ sampling, we established convergence at an 
exponential rate. Contrary, the convergence of EDMD with ergodic sampling depends on whether the underlying system is stochastic or deterministic. While the error decays at a linear rate in the first case, we show that a broad class of deterministic systems exhibits superlinear convergence. Our theoretical results are underpinned by numerical simulations of highly-complex systems such as the folding kinetics of a protein in molecular dynamics and the chaotic nonlinear Kuramoto-Sivashinsky equation. In future research, we investigate suitable choices of the dictionary to speed up the convergence according to the newly identified conditions of Theorem~\ref{thm:main_quadratic} in the deterministic case.


\newpage
\appendix
\onecolumn
\section{Proofs of the main results}
In this section, we provide proofs for our main theorems. For the convenience of the reader, we repeat the particular results.

\subsection{Proof of Theorem \ref{t:main2} in Section~\ref{sec:proofs}}
In this subsection, we let $\mu = \pi$ be an ergodic and invariant measure for the process $X_n$, i.e., case \textbf{(S1)}. Furthermore, we assume that for each $(N-1)$-dimensional subspace $M\subset\R^N$ and $x\in\Psi^{-1}(M)$ we have $\rho(x,\Psi^{-1}(M))=0$ in order to guarantee the a.s.\ invertibility of $\wh C$, see Lemma \ref{l:case2}.

\textbf{Theorem~\ref{t:main2}.}
\textit{Assume that $\la=1$ is an isolated simple eigenvalue of $K$. For $\veps>0$, define the constant
\begin{align}\label{e:alpha}
\alpha := \big[1+4\|(I-&K_0)^{-1}\|\big]\cdot\big[2\|C^{-1}\|_F\|C_+\|_F + \veps\big]^2\cdot\left[\big(\|C_+\|_F^{-2} + \|C^{-1}\|_F^2\big)\|\vphi\|^2 - 2\right].
\end{align}
Then we have
\begin{align*}
\bP\big(\|C^{-1}C_+ - \wh C^{-1}\wh C_+\|_F > \veps\big)\,\le\,\frac{\alpha}{m\veps^2}.
\end{align*}
In particular, if $\delta\in (0,1)$, then for $m\ge\frac{\alpha}{\delta\veps^2}$ ergodic samples, with probability at least $1-\delta$ we have that $\|C^{-1}C_+ - \wh C^{-1}\wh C_+\|_F \le \veps$.
}
\begin{proof}
In a first step, we provide exact formulae for the variances of $\wh C$ and $\wh C_+$ in Theorem~\ref{t:var}. Next, we find bounds on these variances in Proposition \ref{p:sig2} and apply Markov's inequality to deduce a probabilistic bound on the errors $\|C_+ - \wh C_+\|_F^2$ and $\|C - \wh C\|_F^2$. Then Lemma \ref{l:prob_absch} immediately yields the desired bound for $\|C^{-1}C_+ - \wh C^{-1} \wh C_+\|^2_F$.
\end{proof}

For completeness, we repeat Theorem \ref{t:var} here. To this end, recall the definitions of $\bE_+$ and $\bE_0$:
\[
\bE_+ := \<K\vphi,\vphi\> - \|C_+\|_F^2
\qquad\text{and}\qquad
\bE_0 := \|\vphi\|^2 - \|C\|_F^2.
\]

\textbf{Theorem~\ref{t:var}.}
\textit{Define the quantities
\begin{align*}
\sigma_{m,+}^2 := \bE_+ + \sum_{i,j=1}^N\<p_m(K_0)Qg_{ij},Qg_{ji}^*\>
\qquad\text{and}\qquad
\sigma_{m,0}^2 := \bE_0 + \sum_{i,j=1}^N\<K_0p_m(K_0)Q\psi_{ij},Q\psi_{ij}\>,
\end{align*}
where $\psi_{ij} = \psi_i\psi_j$, $g_{ij} = \psi_i\cdot K\psi_j$, $g_{ji}^* = \psi_j\cdot K^*\psi_i$ ($i,j=1,\ldots,N$), $Q = P_{L^2_0(\mu)}$, and $p_m$ is the polynomial
\[
p_m(z) = 2\sum_{k=1}^{m-1}(1-\tfrac km)z^{k-1}.
\]
Then the variances of $\wh C_+$ and $\wh C$ admit the following representations:
\begin{align}
\bE\big[\|C_+ - \wh C_+\|_F^2\big] &= \frac{\sigma_{m,+}^2}{m},\label{e:var2}\\
\bE\big[\|C - \wh C\|_F^2\big] &= \frac{\sigma_{m,0}^2}{m}.\label{e:var3}
\end{align}}
\begin{proof}
With $\Phi_k := \Psi(x_k)\Psi(x_{k+1})^\top$, $k=0,\ldots,m-1$, we have
\begin{align*}
\bE\big[\|C_+ - \wh C_+\|_F^2\big]
&= \bE\bigg[\bigg\|\frac 1m\sum_{k=0}^{m-1}\big[C_+ - \Phi_k\big]\bigg\|_F^2\bigg] = \frac 1{m^2}\bE\bigg[\sum_{k=0}^{m-1}\sum_{\ell=0}^{m-1}\big\<C_+ - \Phi_k,C_+ - \Phi_\ell\big\>_F\bigg]\\
&= \frac 1{m^2}\bE\bigg[\sum_{k=0}^{m-1}\big\|C_+ - \Phi_k\big\|_F^2 + 2\sum_{k=1}^{m-1}\sum_{\ell=0}^{k-1}\big\<C_+ - \Phi_k,C_+ - \Phi_\ell\big\>_F\bigg]\\
&= \frac 1{m^2}\sum_{k=0}^{m-1}\bE\big[\|C_+ - \Phi_k\|_F^2\big] + \frac 2{m^2}\sum_{k=1}^{m-1}(m-k)\bE\big[\big\<C_+ - \Phi_k,C_+ - \Phi_0\big\>_F\big]\\
&= \frac 1m\big(\bE\big[\|\Phi_0\|_F^2\big] - \|C_+\|_F^2\big) + \frac 2m\sum_{k=1}^{m-1}(1 - \tfrac km)\bE\big[\big\<C_+ - \Phi_k,C_+ - \Phi_0\big\>_F\big].
\end{align*}
Concerning the first summand, we compute
\begin{align}\label{e:krank}
\|\Phi_0\|_F^2 = \big\|\big(\psi_i(x_0)\psi_j(x_1)\big)_{i,j=1}^N\big\|_F^2 = \sum_{i,j=1}^N\psi_i^2(x_0)\psi_j^2(x_1) = \vphi(x_0)\vphi(x_1)
\end{align}
and hence
\begin{align*}
\bE\big[\|\Phi_0\|_F^2\big]
= \int\int\vphi(x)\vphi(y)\,\rho(x,dy)\,d\mu(x) = \int\vphi(x)(K\vphi)(x)\,d\mu(x) = \big\<K\vphi,\vphi\big\>.
\end{align*}
Thus,
\begin{align*}
\bE\big[\|C_+ - \wh C_+\|_F^2\big] = \frac 1m\big[\<K\vphi,\vphi\> - \|C_+\|_F^2\big] + \frac 2m\sum_{k=1}^{m-1}(1 - \tfrac km)\bE\big[\big\<C_+ - \Phi_k,C_+ - \Phi_0\big\>_F\big].
\end{align*}
In the case of ergodic sampling, the cross terms in the second summand do not vanish. First,
\begin{align*}
\bE\big[\big\<C_+ - \Phi_k,C_+ - \Phi_0\big\>_F\big] + \|C_+\|_F^2
&= \bE\big[\big\<\Phi_k,\Phi_0\big\>_F\big]
\end{align*}
and next,
\begin{align*}
\bE\big[\big\<\Phi_k,\Phi_0\big\>_F\big]
&= \sum_{i,j=1}^N\bE\big[\psi_i(x_k)\psi_j(x_{k+1})\psi_i(x_0)\psi_j(x_1)\big]\\
&= \sum_{i,j=1}^N\int\int\int\int\psi_i(x')\psi_j(y')\psi_i(x)\psi_j(y)\,\rho(x',dy')\,\rho_{k-1}(y,dx')\,\rho(x,dy)\,d\mu(x)\\
&= \sum_{i,j=1}^N\int\int\int\psi_i(x')\psi_i(x)\psi_j(y)(K\psi_j)(x')\,\rho_{k-1}(y,dx')\,\rho(x,dy)\,d\mu(x)\\
&= \sum_{i,j=1}^N\int\int\psi_i(x)\psi_j(y)\big(K^{k-1}g_{ij}\big)(y)\,\rho(x,dy)\,d\mu(x)\\
&= \sum_{i,j=1}^N\int\psi_i(x)\big(K_1[\psi_j\cdot K^{k-1}g_{ij}]\big)(x)\,d\mu(x)\\
&\overset{(*)}{=} \sum_{i,j=1}^N\big\<\psi_j\cdot K^*\psi_i,K^{k-1}g_{ij}\big\> = \sum_{i,j=1}^N\big\<g_{ji}^*,K^{k-1}g_{ij}\big\>.
\end{align*}
For a justification of $(*)$ see Lemma \ref{l:geht} in the Appendix.

Let $P := P_{\one} = \<\,\cdot\,,\one\>\one$. Then, since $\int g_{ji}^*\,d\mu = \int g_{ij}\,d\mu$,
\begin{align*}
\sum_{i,j=1}^N\big\<Pg_{ji}^*,K^{k-1}Pg_{ij}\big\>
&= \sum_{i,j=1}^N\big\<\<g_{ji}^*,\one\>\one,\<g_{ij},\one\>\one\big\> = \sum_{i,j=1}^N\left|\int g_{ij}\,d\mu\right|^2 = \sum_{i,j=1}^N|\<\psi_i,K\psi_j\>|^2 = \|C_+\|_F^2.
\end{align*}
Similarly, we get
\begin{align}\label{e:later}
\sum_{i,j=1}^N\|Pg_{ji}^*\|^2 = \sum_{i,j=1}^N\|Pg_{ij}\|^2 = \|C_+\|_F^2.
\end{align}
Therefore,
\[
\bE\big[\big\<C_+ - \Phi_k,C_+ - \Phi_0\big\>_F\big] = \sum_{i,j=1}^N\big\<K_0^{k-1}Qg_{ij},Qg_{ji}^*\big\>,
\]
and \eqref{e:var2} is proved.

The proof of \eqref{e:var3} is similar. Here, we set $\wt\Phi_k = \Psi(x_k)\Psi(x_k)^\top$ and observe that
\begin{align*}
\bE\big[\|C - \wh C\|_F^2\big] = \frac 1m\big[\bE[\|\wt\Phi_0\|_F^2] - \|C_+\|_F^2\big] + \frac 2m\sum_{k=1}^{m-1}(1 - \tfrac km)\big(\bE\big[\big\<\wt\Phi_k,\wt\Phi_0\big\>_F\big] - \|C\|_F^2\big).
\end{align*}
Next,
\begin{align*}
\bE\big[\big\<\wt\Phi_k,\wt\Phi_0\big\>_F\big]
&= \sum_{i,j=1}^N\bE\big[\psi_i(x_k)\psi_j(x_k)\psi_i(x_0)\psi_j(x_0)\big] = \sum_{i,j=1}^N\bE\big[\psi_{ij}(x_k)\psi_{ij}(x_0)\big]\\
&= \sum_{i,j=1}^N\int\int\psi_{ij}(x)\psi_{ij}(y)\,\rho_k(x,dy)\,d\mu(x)\\
&= \sum_{i,j=1}^N\int\psi_{ij}(x)(K^k\psi_{ij})(x)\,d\mu(x)
= \sum_{i,j=1}^N\big\<K^k\psi_{ij},\psi_{ij}\big\>.
\end{align*}
In particular, $\bE\big[\big\|\wt\Phi_0\|_F^2\big] = \sum_{i,j=1}^N\|\psi_{ij}\|^2 = \|\vphi\|^2$. Finally, making use of
\begin{align}\label{e:later2}
\sum_{i,j=1}^N\big\<K^kP\psi_{ij},P\psi_{ij}\big\> = \sum_{i,j=1}^N\|P\psi_{ij}\|^2 = \|C\|_F^2,    
\end{align}
we obtain \eqref{e:var3}.
\end{proof}

\begin{prop}\label{p:sig2}
We have
\begin{align*}
\sigma_{m,+}^2&\le\left[1 + \|p_m(K_0)\|\right]\bE_{+}\\
\sigma_{m,0}^2&\le\left[1 + \|K_0p_m(K_0)\|\right]\bE_{0}.
\end{align*}
If, in addition, $\la=1$ is an isolated simple eigenvalue of $K$, then
\begin{align*}
\sigma_{m,+}^2&\le\left[1+4\|(I-K_0)^{-1}\|\right]\bE_{+},\\
\sigma_{m,0}^2&\le\left[1+4\|K_0(I-K_0)^{-1}\|\right]\bE_{0}.
\end{align*}
\end{prop}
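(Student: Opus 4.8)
\textbf{Proof plan for Proposition~\ref{p:sig2}.}
The plan is to bound the sums $\sum_{i,j}\<p_m(K_0)Qg_{ij},Qg_{ji}^*\>$ and $\sum_{i,j}\<K_0p_m(K_0)Q\psi_{ij},Q\psi_{ij}\>$ appearing in the variance representations of Theorem~\ref{t:var}, and then identify the remaining quantities with $\bE_+$ and $\bE_0$. First I would treat the $\sigma_{m,+}^2$ case. Applying Cauchy--Schwarz in $L^2_0(\mu)$ to each term gives $\<p_m(K_0)Qg_{ij},Qg_{ji}^*\>\le\|p_m(K_0)\|\cdot\|Qg_{ij}\|\cdot\|Qg_{ji}^*\|$, and a further application of Cauchy--Schwarz for the sum over $(i,j)$ yields $\sum_{i,j}\|Qg_{ij}\|\|Qg_{ji}^*\|\le\big(\sum_{i,j}\|Qg_{ij}\|^2\big)^{1/2}\big(\sum_{i,j}\|Qg_{ji}^*\|^2\big)^{1/2}$. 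The key observation is that, by the Pythagorean identity $\|g_{ij}\|^2=\|Pg_{ij}\|^2+\|Qg_{ij}\|^2$ together with \eqref{e:later} (which gives $\sum_{i,j}\|Pg_{ij}\|^2=\sum_{i,j}\|Pg_{ji}^*\|^2=\|C_+\|_F^2$), we get
\[
\sum_{i,j}\|Qg_{ij}\|^2=\sum_{i,j}\|g_{ij}\|^2-\|C_+\|_F^2,\qquad \sum_{i,j}\|Qg_{ji}^*\|^2=\sum_{i,j}\|g_{ji}^*\|^2-\|C_+\|_F^2.
\]
Now $\sum_{i,j}\|g_{ij}\|^2=\sum_{i,j}\<\psi_i^2 K\psi_j,\overline{\psi_i^2}\,\overline{K\psi_j}\>$; carrying out the sum over $i$ first turns $\sum_i\psi_i^2$ into $\vphi$, so $\sum_{i,j}\|g_{ij}\|^2=\sum_j\<\vphi\cdot K\psi_j,K\psi_j\>=\<\vphi,\sum_j|K\psi_j|^2\>$. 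Using that $K$ is a (real) Markov operator so that $K^*$ is its adjoint, the same computation applied to $g_{ji}^*=\psi_j\cdot K^*\psi_i$ gives $\sum_{i,j}\|g_{ji}^*\|^2=\<\vphi,\sum_i|K^*\psi_i|^2\>$, and in fact both equal $\<K\vphi,\vphi\>$ after recognizing $\sum_j\<K\psi_j,\vphi\cdot K\psi_j\>=\sum_j\<\psi_j,K^*(\vphi\cdot K\psi_j)\>$ and using the multiplicativity/pointwise-positivity structure that forces this to coincide with $\bE[\vphi(x_0)\vphi(x_1)]=\<K\vphi,\vphi\>$ as already computed in the proof of Theorem~\ref{t:var} via \eqref{e:krank}. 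Hence both sums equal $\<K\vphi,\vphi\>-\|C_+\|_F^2=\bE_+$, the geometric-mean factor collapses to $\bE_+$, and we obtain $\sigma_{m,+}^2\le\bE_++\|p_m(K_0)\|\bE_+=[1+\|p_m(K_0)\|]\bE_+$.

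The estimate for $\sigma_{m,0}^2$ is entirely parallel but slightly simpler since only one family of functions $\psi_{ij}=\psi_i\psi_j$ is involved: Cauchy--Schwarz gives $\<K_0p_m(K_0)Q\psi_{ij},Q\psi_{ij}\>\le\|K_0p_m(K_0)\|\cdot\|Q\psi_{ij}\|^2$, and summing with the Pythagorean identity plus \eqref{e:later2} (which gives $\sum_{i,j}\|P\psi_{ij}\|^2=\|C\|_F^2$) yields $\sum_{i,j}\|Q\psi_{ij}\|^2=\sum_{i,j}\|\psi_{ij}\|^2-\|C\|_F^2=\|\vphi\|^2-\|C\|_F^2=\bE_0$, using $\sum_{i,j}\|\psi_{ij}\|^2=\<\vphi,\vphi\>$ exactly as in the proof of Theorem~\ref{t:var}. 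This gives $\sigma_{m,0}^2\le[1+\|K_0p_m(K_0)\|]\bE_0$.

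For the second pair of bounds, assume $\la=1\notin\sigma(K_0)$, i.e.\ $I-K_0$ is boundedly invertible on $L^2_0(\mu)$. Summing the geometric series defining $p_m$ gives the operator identity $p_m(K_0)=\frac 2m(I-K_0)^{-1}\big(m\,I-\sum_{k=0}^{m-1}K_0^k\big)$; more directly, $p_m(K_0)(I-K_0)=2\sum_{k=1}^{m-1}(1-\tfrac km)(K_0^{k-1}-K_0^k)$, which telescopes to $2\big(I-\tfrac1m\sum_{k=1}^{m-1}K_0^{k-1}\cdot\frac{?}{}\big)$; I would compute this telescoping carefully to arrive at $p_m(K_0)=2(I-K_0)^{-1}\big(I-\tfrac1m\sum_{k=0}^{m-1}K_0^k\big)$. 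Since $K_0$ is a contraction on $L^2_0(\mu)$, the Cesàro average $\tfrac1m\sum_{k=0}^{m-1}K_0^k$ has norm at most $1$, so $\|p_m(K_0)\|\le 2\|(I-K_0)^{-1}\|\cdot(1+1)=4\|(I-K_0)^{-1}\|$, and likewise $\|K_0p_m(K_0)\|\le 4\|K_0(I-K_0)^{-1}\|$. Substituting into the first pair of bounds gives the claimed $m$-independent estimates.

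\textbf{Main obstacle.} The routine parts are the two Cauchy--Schwarz applications and the telescoping identity for $p_m$. The only genuinely delicate point is justifying the identities $\sum_{i,j}\|g_{ij}\|^2=\sum_{i,j}\|g_{ji}^*\|^2=\<K\vphi,\vphi\>$: the second equality with $\<K\vphi,\vphi\>$ requires care because $g_{ij}=\psi_i\cdot K\psi_j$ is not simply a product of basis functions but involves the operator $K$, so one must use the computation $\bE[\|\Phi_0\|_F^2]=\<K\vphi,\vphi\>$ from the proof of Theorem~\ref{t:var} (equation~\eqref{e:krank}) and relate $\sum_{i,j}\|g_{ij}\|^2$ to $\bE[\|\Phi_0\|_F^2]$ via $\|g_{ij}\|^2=\int|\psi_i(x)|^2|(K\psi_j)(x)|^2\,d\mu(x)$ and an application of Jensen/Cauchy--Schwarz to $(K\psi_j)^2\le K(\psi_j^2)$ — this inequality (valid for Markov operators) is in fact what is needed, giving $\sum_{i,j}\|g_{ij}\|^2\le\<\vphi,K\vphi\>=\bE[\|\Phi_0\|^2_F]$, so that $\sum_{i,j}\|Qg_{ij}\|^2\le\bE_+$ and similarly for $g^*$; the geometric mean of the two is then $\le\bE_+$, which is all that is required. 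I would present this Markov-operator Jensen inequality explicitly as the crux of the argument.
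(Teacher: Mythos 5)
Your proposal is correct and follows essentially the same route as the paper: Cauchy--Schwarz, the Pythagorean splitting combined with \eqref{e:later} and \eqref{e:later2}, the Markov-operator Jensen inequality $(Kf)^2\le Kf^2$ (and its analogue for $K^*$), and the identity $p_m(z)=\tfrac{2}{1-z}\big(1-\tfrac 1m\sum_{k=0}^{m-1}z^k\big)$ together with $\|K_0\|\le 1$ yielding the factor $4$. Note only that the equality $\sum_{i,j}\|g_{ij}\|^2=\<K\vphi,\vphi\>$ asserted mid-argument fails for genuinely stochastic $K$ (it holds only in the deterministic composition-operator case); your own closing correction --- that the inequalities $(K\psi_j)^2\le K\psi_j^2$ and $(K^*\psi_i)^2\le K^*\psi_i^2$ (the latter justified in the paper by showing $K^*$ is a Markov operator) suffice, since one only needs $\sum_{i,j}\|Qg_{ij}\|^2\le\bE_+$ and $\sum_{i,j}\|Qg_{ji}^*\|^2\le\bE_+$ --- is exactly the paper's argument.
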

\begin{proof}
Let again $P := P_\one = \<\,\cdot\,,\one\>\one$. Then
\begin{align*}
\sum_{i,j=1}^N\<p_m(K_0)Qg_{ij},Qg_{ji}^*\>
&\le \left(\sum_{i,j=1}^N\|p_m(K_0)\|^2\|Qg_{ij}\|^2\right)^{1/2}\left(\sum_{i,j=1}^N\|Qg_{ji}^*\|^2\right)^{1/2}\\
&= \|p_m(K_0)\|\left(\sum_{i,j=1}^N\big[\|g_{ij}\|^2 - \|Pg_{ij}\|^2\big]\right)^{1/2}\left(\sum_{i,j=1}^N\big[\|g_{ji}^*\|^2 - \|Pg_{ji}^*\|^2\big]\right)^{1/2}.
\end{align*}
Now, as $(Kf)^2\le Kf^2$ for $f\in L^4(\mu)$,
\[
\sum_{i,j=1}^N\|g_{ij}\|^2 = \sum_{i,j=1}^N\int\psi_i^2(K\psi_j)^2\,d\mu\,\le\,\sum_{i,j=1}^N\int\psi_i^2\cdot K\psi_j^2\,d\mu = \<K\vphi,\vphi\>.
\]
Since we also have $(K^*f)^2\le K^*f^2$ for $f\in L^4(\mu)$ (cf.\ Appendix \ref{a:just}), we similarly conclude $\sum_{i,j=1}^N\|g_{ji}^*\|^2\le\<K\vphi,\vphi\>$ and thus (cf.\ \eqref{e:later})
\[
\sum_{i,j=1}^N\<p_m(K_0)Qg_{ij},Qg_{ji}^*\>\,\le\,\|p_m(K_0)\|\big(\<K\vphi,\vphi\> - \|C_+\|_F^2\big) = \|p_m(K_0)\|\cdot\bE_+.
\]
Similarly (cf.\ \eqref{e:later2}),
\begin{align*}
\sum_{i,j=1}^N\<K_0p_m(K_0)Q\psi_{ij},Q\psi_{ij}\>
&\le \|K_0p_m(K_0)\|\sum_{i,j=1}^N\big[\|\psi_{ij}\|^2 - \|P\psi_{ij}\|^2\big] = \|K_0p_m(K_0)\|\cdot\bE_0.
\end{align*}
Assume now that $\la=1$ is an isolated simple eigenvalue of $K$. Since
\[
p_m(z) 
= \frac{2}{1-z}\bigg(1 - \frac{1}{m}\sum_{k=0}^{m-1}z^k\bigg),\qquad z\in\C\backslash\{0\},
\]
making use of $\|K_0\|\le 1$, we observe that
\begin{align*}
\|p_m(K_0)\| = \bigg\|2(I-K_0)^{-1}\Big(I - \tfrac 1m\sum_{k=0}^{m-1}K_0^k\Big)\bigg\|\,\le\,4\|(I-K_0)^{-1}\|.
\end{align*}
Similarly, $\|K_0p_m(K_0)\|\le 4\|K_0(I-K_0)^{-1}\|$.
\end{proof}

\subsection{Proof of Theorem \ref{thm:main_quadratic} in Section \ref{sec:proofs}}\label{ss:det_app}
Let us consider the deterministic subcase of ergodic sampling ,i.e., case {\bf (S1)}, where $K$ is a composition operator with a measure preserving map $T : \calX\to\calX$. Note that, in this case, we have $K_1(fg) = Kf\cdot Kg$ for $f,g\in L^2(\mu)$. Let us further assume that $T$ is bijective, so that the corresponding Koopman operator $K$ is unitary. Hence, there exists a bounded self-adjoint operator $A$ on $L^2(\mu)$ with $\sigma(A)\subset [-\pi,\pi]$ such that $K = e^{iA}$. We set $A_0 := A|_{L^2_0(\mu)}$.

The following lemma is a slightly extended version of Lemma \ref{l:fejer}.

\textbf{Lemma~\ref{l:fejer}} (Extended version).
\textit{
For the variances of $\wh C_+$ and $\wh C$, respectively, we have
\begin{align}
\begin{split}\label{e:var4}
\bE\big[\|C_+ - \wh C_+\|_F^2\big]
&= \frac 1m\sum_{i,j=1}^N\big\<F_m(A_0)Qg_{ij},Qg_{ij}\big\> = \sum_{i,j=1}^N\bigg\|\frac 1m\sum_{k=0}^{m-1}K_0^kQg_{ij}\bigg\|^2
\end{split}
\end{align}
and
\begin{align}
\begin{split}\label{e:var5}
\bE\big[\|C - \wh C\|_F^2\big]
&= \frac 1m\sum_{i,j=1}^N\big\<F_m(A_0)Q\psi_{ij},Q\psi_{ij}\big\> = \sum_{i,j=1}^N\bigg\|\frac 1m\sum_{k=0}^{m-1}K_0^kQ\psi_{ij}\bigg\|^2,
\end{split}
\end{align}
where $F_m$ denotes the well known Fej\'er kernel
\[
F_m(t) = \sum_{|k|\le m-1}\big(1-\tfrac{|k|}m\big)e^{ikt} = \frac 1m\left(\frac{1 - \cos(mt)}{1-\cos t}\right)^2.
\]}
\begin{proof}
We set $q_m(z) := zp_m(z)$ and observe that
\begin{align*}
\Re q_m(e^{it})
&= \sum_{k=1}^{m-1}\big(1-\tfrac km\big)e^{ikt} + \sum_{k=1}^{m-1}\big(1-\tfrac km\big)e^{-ikt} = -1 + \sum_{|k|\le m-1}\big(1-\tfrac{|k|}m\big)e^{ikt} = F_m(t) - 1.
\end{align*}
This proves the first equality in \eqref{e:var5} as $\sum_{i,j=1}^N\|Q\psi_{ij}\|^2 = \bE_0$. Let $E$ denote the spectral measure of the unitary operator $K_0$. For $\Delta\in\frakB(\T)$ we compute
\begin{align*}
\<E(\Delta)Qg_{ij},Qg_{ji}^*\>
&= \<E(\Delta)Qg_{ij},\psi_j\cdot K^*\psi_i\> = \<\psi_j\cdot E(\Delta)Qg_{ij},K^*\psi_i\> = \big\<K(\psi_j\cdot E(\Delta)Qg_{ij}),\psi_i\big\>\\
&= \<K\psi_j\cdot KE(\Delta)Qg_{ij},\psi_i\> = \<KE(\Delta)Qg_{ij},\psi_i\cdot K\psi_j\> = \<KE(\Delta)Qg_{ij},Qg_{ij}\>,
\end{align*}
and hence
\begin{align*}
\Re\<p_m(K_0)Qg_{ij},Qg_{ji}^*\>
&= \Re\int p_m(\la)\,d\<E_\la Qg_{ij},Qg_{ji}^*\> = \Re\int p_m(\la)\,d\<K_0E_\la Qg_{ij},Qg_{ij}\>\\
&= \Re\int\la p_m(\la)\,d\<E_\la Qg_{ij},Qg_{ij}\> = \big\<\Re q_m(K_0)Qg_{ij},Qg_{ij}\big\>\\
&= \|Qg_{ij}\|^2 - \<F_m(A_0)Qg_{ij},Qg_{ij}\>,
\end{align*}
which yields the first equality in \eqref{e:var4}. Now, for $f\in L^2_0(\mu)$ we compute
\begin{align*}
\bigg\|\frac 1m\sum_{k=0}^{m-1}K_0^kf\bigg\|^2 = \frac 1{m^2}\sum_{k,\ell=0}^{m-1}\big\<K_0^{k-\ell}f,f\big\> = \frac 1{m}\sum_{k=-(m-1)}^{m-1}\frac{m-|k|}{m}\big\<K_0^kf,f\big\> = \<F_m(A_0)f,f\>.
\end{align*}
The lemma is proved.
\end{proof}

\begin{rem}
The representation $\big\|\frac 1m\sum_{k=0}^{m-1}K_0^kf\big\|^2 = \<F_m(A_0)f,f\>$ of the square norm of the ergodic series by means of the Fej\'er kernel is well known, see \cite{KachPodv18}.
\end{rem}

The main result of this section is the following extended version of Theorem \ref{thm:main_quadratic} which is valid for exponents $\alpha\in (0,2)$. For its formulation, we define
\[
C(\alpha) = 
\begin{cases}
\frac{4-3\alpha}{1-\alpha}&\text{for $\alpha\in (0,1)$}\\
3&\text{for $\alpha = 1$}\\
\frac{3}{(\alpha-1)(2-\alpha)}&\text{for $\alpha\in (1,2)$}.
\end{cases}
\]
and
\[
C(\alpha,\kappa,\theta) := \max\left\{\frac{2}{1-\cos\theta},\kappa C(\alpha)\right\}.
\]
Moreover, recall the constant
\[
M := \frac{8(1+\|C^{-1}\|_F^2\|C_+\|_F^2)^2}{\|C_+\|_F^2}\cdot\max\{\bE_0,\bE_+\}.
\]

\textbf{Theorem~\ref{thm:main_quadratic}.} (Extended version) \textit{ Assume that $K$ is unitary and suppose that there exist $\alpha\in (0,2)$, $\theta\in (0,1/2)$, and $\kappa\ge 0$ such that for each $f\in\calF$,
\begin{align}\label{e:meas2}
\mu_f(S_{\gamma})\le\kappa\cdot\mu_f(S_\theta)\cdot\gamma^\alpha, \quad \gamma\in (0,\theta].
\end{align}
Then for $\veps\in (0,2)$ we have
\begin{align*}
\bP\big(\|C^{-1}C_+ - \wh C^{-1}\wh C_+\|_F > \veps\big)\,\le\,\frac{C(\alpha,\kappa,\theta)M}{m^\alpha\veps^2},
\end{align*}
where
\begin{align}\label{e:const}
C(\alpha,\kappa,\theta) := \max\left\{\frac{2}{1-\cos\theta},\frac{3\kappa}{(\alpha-1)(2-\alpha)}\right\}.
\end{align}
If $\kappa=0$, then
\begin{align*}
\bP\big(\|C^{-1}C_+ - \wh C^{-1}&\wh C_+\|_F > \veps\big)\,\le\,\frac{M}{(1-\cos\theta)\cdot m^2\veps^2}.
\end{align*}
}
\begin{proof}
Theorem \ref{thm:main_quadratic} is proved by combining Lemma \ref{l:prob_absch} with a result on probabilistic bounds on the errors $\|C_+ - \wh C_+\|_F$ and $\|C - \wh C\|_F$. These bounds are provided by the Proposition \ref{p:probs_det_app} below.
\end{proof}

We set $C_0 = C$ and $\wh C_0 = \wh C$.

\begin{prop}\label{p:probs_det_app}
Let the conditions of the extended version of Theorem \ref{thm:main_quadratic} be satisfied. Then for $\veps>0$ we have
\begin{align}\label{e:superlinear}
\bP\big(\|C_i - \wh C_i\|_F > \veps\big)\,\le\,\frac{C(\alpha,\kappa,\theta)\cdot\bE_i}{m^\alpha\veps^2},\qquad i\in\{+,0\}.
\end{align}
If $\mu_f(S_\theta)=0$ for some $\theta\in (0,\pi)$ and all $f\in\calF$, then
\begin{align}\label{e:easier}
\bP\big(\|C_i - \wh C_i\|_F > \veps\big)\,\le\,\frac{2\bE_i}{(1-\cos\theta)m^2\veps^2},\qquad i\in\{+,0\}.
\end{align}
\end{prop}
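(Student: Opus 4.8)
\emph{Strategy.} The plan is to reduce everything, via Markov's inequality and the variance identity of the extended Lemma~\ref{l:fejer}, to a pointwise decay estimate for the squared ergodic averages $\bigl\|\tfrac1m\sum_{k=0}^{m-1}K_0^kf\bigr\|^2$ over the finitely many functions $f\in\calF$, and then to obtain that estimate from the thinness hypothesis \eqref{e:meas2} by splitting the spectral mass of $f$ near the fixed point $1\in\T$ of $K_0$.

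\emph{Reduction.} First I would apply Markov's inequality, $\bP(\|C_i-\wh C_i\|_F>\veps)\le\veps^{-2}\,\bE[\|C_i-\wh C_i\|_F^2]$ for $i\in\{0,+\}$. By the extended Lemma~\ref{l:fejer} the second moment equals $\sum_{f\in\calF_i}\bigl\|\tfrac1m\sum_{k=0}^{m-1}K_0^kf\bigr\|^2$, where $\calF_0=\{Q\psi_{ij}\}_{i,j}$ and $\calF_+=\{Qg_{ij}\}_{i,j}$; moreover, exactly as in the proof of Proposition~\ref{p:sig2}, $\sum_{f\in\calF_0}\|f\|^2=\bE_0$ and $\sum_{f\in\calF_+}\|f\|^2\le\bE_+$. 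Using $K_0=e^{iA_0}$ with $A_0=A_0^*$, $\sigma(A_0)\subseteq[-\pi,\pi]$, and writing $\mu_f$ for the scalar spectral measure of $A_0$ at $f$ (viewed on an interval parametrizing $\T$), the Fej\'er identity from Lemma~\ref{l:fejer} reads $\bigl\|\tfrac1m\sum_{k=0}^{m-1}K_0^kf\bigr\|^2=\tfrac1{m^2}\int\tfrac{1-\cos(mt)}{1-\cos t}\,d\mu_f(t)$. Hence it suffices to prove $\bigl\|\tfrac1m\sum_{k=0}^{m-1}K_0^kf\bigr\|^2\le C(\alpha,\kappa,\theta)\,m^{-\alpha}\|f\|^2$ for each $f\in\calF$: summing and using $\sum_{f\in\calF_i}\|f\|^2\le\bE_i$ then gives \eqref{e:superlinear}.

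\emph{The Fej\'er mass estimate.} Fix $f\in\calF$ and, without loss of generality, $m\ge1/\theta$ (for the finitely many smaller $m$ the contraction bound $\bigl\|\tfrac1m\sum K_0^kf\bigr\|\le\|f\|$ already suffices, since $\theta^{-\alpha}\le 2/(1-\cos\theta)\le C(\alpha,\kappa,\theta)$). I would split the integration domain into $S_{1/m}$, the annulus $S_\theta\setminus S_{1/m}$, and the far region $\T\setminus S_\theta$, and use the two elementary kernel bounds $\tfrac{1-\cos(mt)}{1-\cos t}\le m^2$ everywhere, and $\tfrac{1-\cos(mt)}{1-\cos t}\le\tfrac2{1-\cos t}\le c_0|t|^{-2}$ away from the origin (with $1-\cos t\ge c_0^{-1}t^2$). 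On $\T\setminus S_\theta$ the kernel is $\le 2/(1-\cos\theta)$, contributing at most $\tfrac2{(1-\cos\theta)m^2}\|f\|^2\le\tfrac2{(1-\cos\theta)m^\alpha}\|f\|^2$ since $\alpha\le2$. On $S_{1/m}$ the kernel is $\le m^2$, so, by \eqref{e:meas2} with $\gamma=1/m\le\theta$, the contribution is $\le m^{-2}\cdot m^2\mu_f(S_{1/m})\le\kappa\,\mu_f(S_\theta)\,m^{-\alpha}$. On the annulus the kernel is $\le c_0|t|^{-2}$, and I would estimate $m^{-2}\int_{S_\theta\setminus S_{1/m}}c_0|t|^{-2}\,d\mu_f(t)$ by Fubini/integration against the distribution function $s\mapsto\mu_f(S_s)\le\kappa\,\mu_f(S_\theta)\,s^\alpha$; this reduces to an integral $\int_{1/m}^{\theta}s^{\alpha-3}\,ds\asymp \tfrac{m^{2-\alpha}}{2-\alpha}$ for $\alpha\in(0,2)$, yielding a contribution of order $\tfrac{\kappa}{2-\alpha}\,\mu_f(S_\theta)\,m^{-\alpha}$ plus a harmless $m^{-2}$-term. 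Collecting the three pieces and bounding $\mu_f(S_\theta)\le\|f\|^2$ gives the desired inequality with a constant of the claimed $\max$-form; the precise value of $C(\alpha,\kappa,\theta)$ (and the appearance of an $(\alpha-1)$ in the denominator near $\alpha=1$) is pure bookkeeping of the endpoint behaviour of the annulus integral, in the spirit of the Fej\'er-sum estimates of \cite{KachPodv18}.

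\emph{The case $\kappa=0$, and the main obstacle.} If $\mu_f(S_\theta)=0$ for all $f\in\calF$, then each $\mu_f$ is carried by $\T\setminus S_\theta$, where the Fej\'er kernel is $\le2/(1-\cos\theta)$; hence $\bigl\|\tfrac1m\sum_{k=0}^{m-1}K_0^kf\bigr\|^2\le\tfrac2{(1-\cos\theta)m^2}\|f\|^2$, and summing over $f\in\calF_i$ and applying Markov gives \eqref{e:easier}. The only genuinely delicate step is the Fej\'er mass estimate: choosing the radius ($\sim1/m$) at which one switches from the $m^2$-bound to the $|t|^{-2}$-bound, and organizing the layer-cake computation so that the constant remains finite for \emph{all} $\alpha\in(0,2)$ --- it must degenerate as $\alpha\to 2^-$, since then the annulus integral only barely converges. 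Everything else (Markov, Lemma~\ref{l:fejer}, the spectral calculus for $K_0$) is routine.
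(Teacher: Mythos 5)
Your proof is essentially correct, but it takes a genuinely different route on the key step. The reduction (Markov's inequality plus the variance identity of the extended Lemma~\ref{l:fejer}, together with $\sum_{f\in\calF_0}\|f\|^2=\bE_0$ and $\sum_{f\in\calF_+}\|f\|^2\le\bE_+$) coincides with the paper's. Where you diverge is in how the decay $\bigl\|\tfrac1m\sum_{k=0}^{m-1}K_0^kf\bigr\|^2\lesssim m^{-\alpha}\|f\|^2$ is extracted from \eqref{e:meas2}: the paper splits $f=f_1+f_2$ with $f_1=E(\T\setminus S_\theta)f$, $f_2=E(S_\theta)f$, treats $f_1$ by the geometric-sum bound $|1-z|^{-2}\lesssim(1-\cos\theta)^{-1}$, verifies $\mu_{f_2}(S_\gamma)\le\kappa\|f_2\|^2\gamma^\alpha$, and then invokes Theorem~2 of \cite{KachPodv18} as a black box for the $f_2$-part (combining via orthogonality of the two spectral subspaces). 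You instead prove the needed Fej\'er-mass estimate directly, splitting the integration domain into $S_{1/m}$, $S_\theta\setminus S_{1/m}$ and $\T\setminus S_\theta$ and running a layer-cake computation against the distribution bound $\mu_f(S_s)\le\kappa\,\mu_f(S_\theta)s^\alpha$; in effect you reprove the relevant part of the Kachurovskii--Podvigin estimate, which makes the argument self-contained, and your constant does not even degenerate at $\alpha=1$ (the $(\alpha-1)^{-1}$ in \eqref{e:const} is an artifact of citing their Theorem~2). Your treatment of the $\kappa=0$ case is the same as the paper's. Two small caveats: (i) what you obtain is a constant of the same $\max$-form but not literally the $C(\alpha,\kappa,\theta)$ of \eqref{e:const}, so to recover the statement verbatim you must either cite \cite{KachPodv18} as the paper does or accept an adjusted constant; (ii) the ``harmless $m^{-2}$-term'' from the annulus boundary carries a coefficient of order $\theta^{-2}$, and to fold it into the claimed $\max$ you should either enlarge the $\kappa$-branch of the constant or note that \eqref{e:meas2} at $\gamma=\theta$ forces $\kappa\theta^\alpha\ge1$ whenever $\mu_f(S_\theta)>0$, which lets that term be absorbed into the $m^{-\alpha}$ piece. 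Neither point affects the substance of the argument.
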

\begin{proof}
We prove the statements for $i=0$. Similar reasonings apply to the case $i=+$, respectively.

To show the second statement, assume that there exists $\theta\in (0,1/2)$ such that $\mu_f(S_\theta)=0$ for all $f\in\calF$, and let $r(z) := \sum_{k=0}^{m-1}z^k$. Then
\begin{align*}
\bE\big[\|C - \wh C\|_F^2\big]
&= \sum_{i,j=1}^N\bigg\|\frac 1m\sum_{k=0}^{m-1}K_0^kQ\psi_{ij}\bigg\|^2 = \frac 1{m^2}\sum_{i,j=1}^N\|r(K_0)Q\psi_{ij}\|^2 = \frac 1{m^2}\sum_{i,j=1}^N\int_{\T\backslash S_\theta}|r(z)|^2\,d\mu_{Q\psi_{ij}}(z)\\
&= \frac 1{m^2}\sum_{i,j=1}^N\int_{\T\backslash S_\theta}\left|\frac{1-z^m}{1-z}\right|^2\,d\mu_{Q\psi_{ij}}(z)\,\le\,\frac 1{m^2}\sum_{i,j=1}^N\frac{4}{|1-e^{i\theta}|^2}\|Q\psi_{ij}\|^2 = \frac{2\bE_0}{(1-\cos\theta)m^2}.
\end{align*}
Hence, \eqref{e:easier} follows from Markov's inequality.

Let us now assume that \eqref{e:meas2} holds for all $f\in\calF$ and all $\gamma\in (0,\theta]$. Let $f = Q\psi_{ij}$ for some fixed pair $i,j\in [1:N]$. Then $f = f_1 + f_2$, where $f_1 = E(\T\backslash S_\theta)f$ and $f_2 = E(S_\theta)f$. Since $\mu_{f_1}(S_\theta)=0$, we obtain as above that
\[
\bigg\|\frac 1m\sum_{k=0}^{m-1}K_0^kf_1\bigg\|^2\,\le\,\frac{2\|f_1\|^2}{(1-\cos\theta)m^2}.
\]
For $f_2$ and all $\gamma\in (0,1/2]$ we infer from \eqref{e:meas2} that
\begin{align*}
\mu_{f_2}(S_\gamma)
&= \mu_f(S_\gamma\cap S_\theta) = \mu_f(S_{\min\{\gamma,\theta\}})\le\kappa\mu_f(S_\theta)\min\{\gamma,\theta\}^\alpha \le\kappa\mu_f(S_\theta)\gamma^\alpha = \kappa\|E(S_\theta)f\|^2\gamma^\alpha = \kappa\|f_2\|^2\gamma^\alpha.
\end{align*}
Hence, Theorem 2 in \cite{KachPodv18} implies
\[
\bigg\|\frac 1m\sum_{k=0}^{m-1}K_0^kf_2\bigg\|^2\,\le\,\frac{\kappa\|f_2\|^2C(\alpha)}{m^\alpha}.
\]
We conclude
\begin{align*}
\bigg\|\frac 1m\sum_{k=0}^{m-1}K_0^kf\bigg\|^2
&\le \bigg\|\frac 1m\sum_{k=0}^{m-1}K_0^kf_1\bigg\|^2 + \bigg\|\frac 1m\sum_{k=0}^{m-1}K_0^kf_2\bigg\|^2\,\le\,\frac{2\|f_1\|^2}{(1-\cos\theta)m^2} + \frac{\kappa\|f_2\|^2C(\alpha)}{m^\alpha}\\
&\le \max\left\{\frac{2}{1-\cos\theta},\kappa C(\alpha)\right\}\frac{\|f\|^2}{m^\alpha} = C(\alpha,\kappa,\theta)\frac{\|f\|^2}{m^\alpha},
\end{align*}
and therefore
\begin{align*}
\bE\big[\|C - \wh C\|_F^2\big]
&= \sum_{i,j=1}^N\bigg\|\frac 1m\sum_{k=0}^{m-1}K_0^kQ\psi_{ij}\bigg\|^2
\le C(\alpha,\kappa,\theta)\sum_{i,j=1}^N\frac{\|Q\psi_{ij}\|^2}{m^\alpha} = \frac{C(\alpha,\kappa,\theta)\cdot\bE_0}{m^\alpha}.
\end{align*}
The proposition is proved.
\end{proof}

We now turn to the proof of Corollary \ref{c:thin_meas}.

\textbf{Corollary~\ref{c:thin_meas}.}
\textit{Assume that $K_0$ is of the form $K_0 = \sum_{n\in\N}e^{2\pi it_n}\<\,\cdot\,,f_n\>f_n$, where $t_n\in [-1/2,1/2)$, $n\in\N$, and $(f_n)$ is an orthonormal basis of $L_0^2(\mu)$. Then an $f\in L^2_0(\mu)$ satisfies \eqref{e:thin_meas} if $(\<f,f_n\>)_{n\in\N}\in\ell^2_w(\N)$, where $w_n = |t_n|^{-\alpha}$.}
\begin{proof}
Suppose that $(\<f,f_n\>)_{n\in\N}\in\ell^2_w(\N)$, i.e., $S := \sum_{n\in\N}|\<f,f_n\>|^2w_n < \infty$. Hence,
\[
\mu_f(S_\gamma) = \sum_{|t_n|\le\gamma}|\<f,f_n\>|^2\,\le\,\sum_{|t_n|\le\gamma}\frac{\gamma^\alpha}{|t_n|^\alpha}|\<f,f_n\>|^2\,\le\,S\gamma^\alpha.
\]
As $\theta$ can be chosen arbitrarily, \eqref{e:thin_meas} holds with $\kappa = S/\|f\|^2$.
\end{proof}

\subsection{Proof of Theorem \ref{t:main1} in Section~\ref{sec:proofs}}
In this subsection, we let $\mu = \nu$ satisfying \eqref{e:nu}, i.e., we consider the case \textbf{(S2)} of i.i.d.~sampling and assume that the observables $\psi_1,\ldots,\psi_N$ are strongly $\mu$-linearly independent, see Definition \ref{d:lin_ind}. Let us recall the main result on the learning error in case \textbf{(S2)} as stated in Theorem~\ref{t:main1}.

\textbf{Theorem~\ref{t:main1}.}
\textit{
Assume that $C_+\neq 0$. Let $\veps>0$ and set $\sigma = 2\|C^{-1}\|_F\|C_+\|_F + \veps$. Then
\begin{align*}
\bP\big(\|C^{-1}C_+ &- \wh C^{-1}\wh C_+\|_F > \veps\big)\le\frac{\sigma^2}{m\veps^2}\left[\big(L\|C_+\|_F^{-2} + \|C^{-1}\|_F^2\big)\|\vphi\|^2 - 2\right].
\end{align*}
If, in addition, $\vphi\in L^\infty(\mu)$, then
\begin{align*}
\bP\big(\|C^{-1}C_+ - \wh C^{-1}\wh C_+\|_F > \veps\big)\le 2\exp\left(-\frac{m\veps^2\|C_+\|_F^2}{2\tau^2(1+L)^2}\right) + 2\exp\left(-\frac{m\veps^2}{8\tau^2\|C^{-1}\|_F^2}\right),
\end{align*}
where $\tau = \sigma\|\vphi\|_\infty$.}
\begin{proof}
    Theorem \ref{t:main1} is an immediate consequence of the following Proposition~\ref{p:prob_est1} below, which deduces a probabilistic error bound on $C_+ - \wh C_+$ and $C - \wh C$, and Lemma~\ref{l:prob_absch} which allows to straightforwardly infer the claimed bound on $C^{-1}C_+ - \wh C^{-1} \wh C_+$.
\end{proof}

\begin{prop}\label{p:prob_est1}
The following probabilistic bounds on the estimation errors hold:
\begin{align}
\begin{split}\label{e:first_two}
\bP\big(\|C_+ - \wh C_+\|_F > \veps\big)
\le\frac{\big\<K\vphi,\vphi\big\> - \|C_+\|_F^2}{m\veps^2}
\qquad\text{and}\qquad
\bP\big(\|C - \wh C\|_F > \veps\big)
\le\frac{\|\vphi\|^2 - \|C\|_F^2}{m\veps^2}.
\end{split}
\end{align}
If $\vphi\in L^\infty(\mu)$, then
\begin{align}
\begin{split}\label{e:second_two}
\bP\big(\|C_+ - \wh C_+\|_F > \veps\big)
\le 2 \,e^{-\frac{m\veps^2}{2(1+L)^2\|\vphi\|_\infty^2}}
\qquad\text{and}\qquad
\bP\big(\|C - \wh C\|_F > \veps\big)
\le 2 \,e^{-\frac{m\veps^2}{8\|\vphi\|_\infty^2}}.
\end{split}
\end{align}
\end{prop}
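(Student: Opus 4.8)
The plan is to bound the two halves of the error $\|C_+ - \wh C_+\|_F$ and $\|C - \wh C\|_F$ separately, since each empirical estimator is an average of $m$ i.i.d.\ matrices. Write $\wh C_+ = \tfrac1m\sum_{k=0}^{m-1}\Phi_k$ with $\Phi_k = \Psi(x_k)\Psi(y_k)^\top$, so $\bE[\Phi_k] = C_+$ (this uses $y_k|(x_k=x)\sim\rho(x,\cdot)$ and the definition \eqref{e:koopman} of $K$, entrywise). For the variance bound \eqref{e:first_two} I would compute
\[
\bE\big[\|C_+-\wh C_+\|_F^2\big] = \frac1m\big(\bE[\|\Phi_0\|_F^2] - \|C_+\|_F^2\big),
\]
using that the cross terms vanish by independence, and then invoke the identity \eqref{e:krank}, namely $\|\Phi_0\|_F^2 = \vphi(x_0)\vphi(y_0)$, whose expectation is $\int\vphi(x)(K\vphi)(x)\,d\mu(x) = \<K\vphi,\vphi\>$ (here one must check $\vphi\in L^2(\mu)$ so that $K\vphi$ makes sense and the exchange of expectations is legitimate --- this is exactly the standing assumption). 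Markov's inequality then gives the first estimate. The bound on $\|C-\wh C\|_F$ is the analogous computation with $\wt\Phi_k = \Psi(x_k)\Psi(x_k)^\top$, $\bE[\|\wt\Phi_0\|_F^2] = \|\vphi\|^2$, yielding $\bE[\|C-\wh C\|_F^2] = \tfrac1m(\|\vphi\|^2-\|C\|_F^2)$.

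For the bounded case \eqref{e:second_two} I would switch from second-moment control to a concentration inequality for sums of independent bounded random vectors in a Hilbert space, e.g.\ the vector-valued Hoeffding/Azuma bound (the Pinelis inequality cited in the bibliography as \cite{Pine94} is tailor-made for this). The point is that $C_+ - \wh C_+ = \tfrac1m\sum_k (C_+ - \Phi_k)$ is a normalized sum of independent, mean-zero matrices, so I only need a uniform bound on $\|C_+ - \Phi_k\|_F$. Since $\|\Phi_k\|_F = \big(\vphi(x_k)\vphi(y_k)\big)^{1/2}\le\|\vphi\|_\infty$ almost surely and $\|C_+\|_F = \|\bE\Phi_k\|_F\le\|\vphi\|_\infty$ too, a crude triangle inequality gives $\|C_+-\Phi_k\|_F\le 2\|\vphi\|_\infty$; but to land the factor $(1+L)$ rather than $2$ one should instead bound $\|C_+\|_F$ by $L\|\vphi\|_\infty$ or similar using the $L^2$-contraction-type estimate \eqref{e:nu} (which controls $\|K\|\le L$, hence $\|C_+\|_F = \|(\<\psi_i,K\psi_j\>)\|_F\le L\|C\|_F^{1/2}\cdots$ — the precise constant-chasing here is what produces the $(1+L)^2$ in the exponent). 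Feeding the bound on the increments into the Hilbert-space Hoeffding inequality produces $2\exp(-m\veps^2/(2(1+L)^2\|\vphi\|_\infty^2))$; for $C-\wh C$ the increments are bounded by $2\|\vphi\|_\infty$ (no $L$ appears since only $x_k$ is involved), giving the denominator $8\|\vphi\|_\infty^2$.

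The main obstacle I anticipate is not the probabilistic machinery --- variance identities and Hoeffding are routine --- but getting the constants exactly right in the bounded case, in particular tracking where the factor $L$ enters: it comes precisely from having to bound the mean $C_+$ (which involves $K$, hence \eqref{e:nu}) as opposed to the data matrix $\Phi_k$ (which does not). A secondary technical point is justifying all the interchanges of integration and the integrability of $\vphi$ and $g_{ij}$ against the kernel $\rho(x,\cdot)$; these are covered by the standing assumption $\vphi\in L^2(\mu)$ together with the Cauchy–Schwarz argument already used in the proof of Theorem~\ref{t:var}, and by the condition \eqref{e:nu} (guaranteeing $K$ is well defined and bounded on $L^2$). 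Once Proposition~\ref{p:prob_est1} is established, Theorem~\ref{t:main1} follows immediately by inserting these four bounds into Lemma~\ref{l:prob_absch}, which converts control of $\|C-\wh C\|_F$ and $\|C_+-\wh C_+\|_F$ into control of $\|C^{-1}C_+ - \wh C^{-1}\wh C_+\|_F$ via a union bound over the two events, with the $\sigma = 2\|C^{-1}\|_F\|C_+\|_F + \veps$ and $\tau = \sigma\|\vphi\|_\infty$ appearing as the natural scaling from that lemma.
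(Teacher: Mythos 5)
Your proposal is correct and follows essentially the same route as the paper: the variance identity with vanishing cross terms plus Markov's inequality for \eqref{e:first_two}, and the Hilbert-space Hoeffding/Pinelis inequality applied to the independent mean-zero increments for \eqref{e:second_two}, with the factor $(1+L)$ arising exactly as you indicate from bounding $\|C_+\|_F\le L\|\vphi\|_\infty$ via $\|K\|\le L$ (the paper additionally spells out the step you defer to ``justifying interchanges'', namely using \eqref{e:nu} to show $\vphi(y_k)\le\|\vphi\|_\infty$ almost surely). Incidentally, your ``crude'' bound $\|C_+\|_F=\|\bE\Phi_k\|_F\le\|\vphi\|_\infty$ is perfectly legitimate and, since \eqref{e:nu} with $A=\calX$ forces $L\ge 1$, it would even yield the sharper denominator $8\|\vphi\|_\infty^2$ in both exponents, so no constant-chasing through $L$ is actually needed to establish the stated inequality.
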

\begin{proof}
We set $\Phi_k := \Psi(x_k)\Psi(y_k)^\top$. As in the proof of Theorem \ref{t:var}, we compute
\begin{align*}
\bE\big[\|C_+ - \wh C_+\|_F^2\big] = \frac 1m\big[\<K\vphi,\vphi\> - \|C_+\|_F^2\big] + \frac 2m\sum_{k=1}^{m-1}(1 - \tfrac km)\bE\big[\big\<C_+ - \Phi_k,C_+ - \Phi_0\big\>_F\big].
\end{align*}
The sum $\frac 2m\sum_{k=1}^{m-1}\ldots$ in the last expression vanishes as $\Phi_k$ and $\Phi_0$ are stochastically independent. Now, apply Markov's inequality to the non-negative random variable $\|C_+ - \wh C_+\|_F^2$ to obtain the first probabilistic bound. The second is proved similarly.

Now, assume that $\vphi\in L^\infty(\mu)$ and set $S := \|\vphi\|_{L^\infty(\mu)}$. Since $\|K\|\le L$, we observe that
\begin{align*}
\|C_+\|_F^2
&= \sum_{i,j=1}^N|\<\psi_i,K\psi_j\>|^2\le\sum_{i,j=1}^N\|\psi_i\|^2L^2\|\psi_j\|^2 = L^2\|\vphi\|_1^2\le L^2S^2.
\end{align*}
Next, let $(X,Y)\sim\mu_{0}$. Then $\|\Psi(X)\Psi(Y)^\top\|_F^2 = \vphi(X)\vphi(Y)$, see \eqref{e:krank}. We shall prove that $\vphi(X)\vphi(Y)\le S^2$ a.s.. To see this, we note that $X\sim\mu$ and thus $\vphi(X)\le S$ a.s., so that
\begin{align*}
\bP\big(\vphi(X)\vphi(Y)>S^2\big)
&\le\bP(\vphi(Y)>S) = \bP\big((X,Y)\in\calX\times\vphi^{-1}((S,\infty))\big)\\
&= \int\rho(x,\vphi^{-1}((S,\infty)))\,d\mu(x)\le L^2\cdot\mu(\vphi^{-1}((S,\infty))) = 0.
\end{align*}
Thus, we obtain that $\|\Psi(x_k)\Psi(y_k)^\top\|_F\le S$ a.s.\ for all $k=0,\ldots,m-1$.

Consider the random matrices $D_k := C_+ - \Psi(x_k)\Psi(y_k)^\top$, $k=0,\ldots,m-1$. These are stochastically independent, $\bE[D_k]=0$, and $\|D_k\|_F\le (1+L)S$ a.s.. Hence, Hoeffding's inequality for bounded independent random variables in Hilbert spaces (see Corollary A.5.2 in \cite{Moll21} and Theorem 3.5 in \cite{Pine94}) implies that
\[
\bP\big(\|C_+ - \wh C_+\|_F > \veps\big) = \bP\left(\left\|\tfrac 1m\sum_{k=0}^{m-1}D_k\right\|_F > \veps\right)\,\le\,2\cdot e^{-\frac{m\veps^2}{2(1+L)^2S^2}},
\]
as claimed. The second bound is proved similarly.
\end{proof}

\begin{rem}
The bound on $\|C-\wh C\|_F$ in Proposition \ref{p:prob_est1} had been found already in Lemma 3.4.1 of \cite{Moll21} (where $\|\vphi\|_\infty$ is replaced by the slightly worse constant $N\cdot\max_{i=1,\ldots,N}\|\psi_i\|_\infty^2$).
\end{rem}

\section{Further aspects of the Koopman operator}\label{a:just}
The following considers both cases of i.i.d.~and ergodic sampling, that is, we let $\mu\in\{\nu,\pi\}$ as in \textbf{(S1)} and \textbf{(S2)}.
\subsection{Well-definedness of the Koopman operator}
Note that \eqref{e:inv} and \eqref{e:nu} imply $\tau\ll\mu$, where $\tau(A) := \int\rho(x,A)\,d\mu(x)$, with a density $g\in L^\infty(\mu)$. In particular, $f\in L^p(\mu)$ implies $f\in L^p(\tau)$, since $\int |f|^p\,d\tau = \int |f|^pg\,d\mu\le \|g\|_\infty\int |f|^p\,d\mu$.

Note that for every simple function $f$ on $\calX$ we have
\[
\int f\,d\tau = \int\int f(y)\,\rho(x,dy)\,d\mu(x).
\]
Now, let $f\in L^1(\mu)$, $f\ge 0$. Then $f\in L^1(\tau)$, and there exists a sequence of simple functions $(f_n)_{n\in\N}$, $0\le f_n\le f$, such that $f_n\upto f$ as $n\to\infty$. Hence,
\[
\int f\,d\tau = \int (f-f_n)\,d\tau + \int\int f_n(y)\,\rho(x,dy)\,d\mu(x).
\]
The first integral approaches zero as $n\to\infty$ by dominated convergence ($0\le f-f_n\le 2|f|$), and the second integral tends to $\int\int f(y)\,\rho(x,dy)\,d\mu(x)$ as $n\to\infty$ by Beppo-Levi. In particular, $\int f(y)\,\rho(x,dy) < \infty$ for $\mu$-a.e.\ $x\in\calX$, which is the definition of the Koopman operator.

\subsection{The adjoint of the Koopman operator}
 For $f\in L^1(\mu)$ define the signed measure
\[
\mu_f(A) := \int\rho(x,A)f(x)\,d\mu(x),\qquad A\in\frakB(\calX).
\]
If $\mu(A)=0$, then $\int\rho(x,A)\,d\mu(x)\le L\mu(A) = 0$ (where $L=1$ if $\mu=\pi$), hence $\rho(x,A)=0$ for $\mu$-a.e.\ $x\in\calX$, and thus $\mu_f(A)=0$. Therefore, there exists a unique $Pf\in L^1(\mu)$ such that $\mu_f(A) = \int_A Pf\,d\mu$ holds for all $A\in\frakB(\calX)$. It is now easily seen that $P : L^1(\mu)\to L^1(\mu)$ is linear. Moreover, if $f\in L^1(\mu)$ and $\Delta_+ = \{f\ge 0\}$, $\Delta_- = \{f < 0\}$, we have $\mu_f = \mu_f^+ - \mu_f^-$, where $\mu_f^pm(A) = \pm\int_{\Delta_\pm}\rho(x,A)\,d\mu(x)$, thus
\[
\int|Pf|\,d\mu = |\mu_f|(\calX) = \mu_f^+(\calX) + \mu_f^-(\calX) = \int|f|\,d\mu.
\]
Hence, $P$ is an isometry on $L^1(\mu)$. It is moreover easy to see that $P$ is a Markov operator, i.e., $P\one = \one$ and $Pf\ge 0$ if $f\ge 0$.

For a Borel set $A\in\frakB(\calX)$ and $f\in L^q(\mu)$ (where $\frac 1p + \frac 1q = 1$) we have
\[
\int_A K_p^*f\,d\mu = \<K_p^*f,\one_A\>_{L^q,L^p} = \<f,K_p\one_A\>_{L^q,L^p} = \int\rho(x,A)f(x)\,d\mu(x) = \mu_f(A) = \int_A Pf\,d\mu.
\]
This implies $K_p^*f = Pf$. In particular, $P$ maps $L^q(\mu)$ into $L^q(\mu)$ for all $q\in [1,\infty]$ and coicides there with $K_p^*$ for $q\neq 1$.

\begin{lem}
For $f\in L^2(\mu)$ we have $(Pf)^2\le Pf^2$ $\mu$-a.e.
\end{lem}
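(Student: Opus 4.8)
The inequality is a Jensen-type estimate for the Markov operator $P = K_2^*$, and the plan is to deduce it directly from the structural facts already established, namely that $P$ is linear and isometric on $L^1(\mu)$, positive (it maps nonnegative functions to nonnegative functions), and unital ($P\one = \one$).

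First I would check that both sides are well defined. Since $\mu$ is a probability measure and $f \in L^2(\mu)$, we have $f^2 \in L^1(\mu)$, so $Pf^2 \in L^1(\mu)$ makes sense; moreover $Pf \in L^2(\mu)$ because on $L^2(\mu)$ the operator $P$ coincides with the bounded operator $K^*$, so in particular $Pf$ is finite $\mu$-a.e. Next comes the elementary core step: for a fixed constant $a \in \R$ the function $(f-a)^2 = f^2 - 2af + a^2$ is nonnegative and lies in $L^1(\mu)$ (as $f \in L^2(\mu) \subseteq L^1(\mu)$ and constants are integrable). Applying $P$, and using linearity, positivity, and $P\one = \one$, gives
\[
0 \le P\big((f-a)^2\big) = Pf^2 - 2a\,Pf + a^2 \qquad \mu\text{-a.e.},
\]
that is, $Pf^2 \ge 2a\,Pf - a^2$ outside a $\mu$-null set $N_a$ that may depend on $a$.

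It then remains to remove the dependence of the exceptional set on $a$. I would set $N := \bigcup_{a \in \Q} N_a$, still a $\mu$-null set, and note that for every $x \notin N$ the bound $Pf^2(x) \ge 2a\,Pf(x) - a^2$ holds simultaneously for all $a \in \Q$. Since the function $a \mapsto 2at - a^2$ is continuous in $a$ with supremum $t^2$, attained at $a = t$, taking the supremum over $a \in \Q$ yields $Pf^2(x) \ge (Pf(x))^2$ for all $x \notin N$, which is the assertion. I do not expect a genuine obstacle here; the only delicate point is precisely this passage from a statement valid $\mu$-a.e.\ for each fixed $a$ to one valid $\mu$-a.e.\ for all $a$ at once, which the countable-density argument handles. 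Alternatively one could invoke a general Jensen inequality for Markov operators, but the direct argument is shorter and self-contained.
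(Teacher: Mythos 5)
Your argument is correct, but it proves the lemma by a genuinely different route than the paper. The paper first establishes the inequality for simple functions $f=\sum_i a_i\one_{A_i}$, using $\sum_i P\one_{A_i}=P\one=\one$ and convexity of $z\mapsto z^2$ to see $(Pf)^2\le Pf^2$ pointwise, and then passes to a general $f\in L^2(\mu)$ by approximating with simple functions $f_n\to f$ in $L^2(\mu)$ and estimating $\int_A\bigl[(Pf)^2-Pf^2\bigr]\,d\mu\le 2\|f_n-f\|_{L^2(\mu)}\|f_n+f\|_{L^2(\mu)}\to 0$ for every Borel set $A$. You instead use the supporting-parabola (quadratic completion) trick: apply the positive, linear, unital operator $P$ to $(f-a)^2\ge 0$ for each fixed $a$, obtaining $Pf^2-2a\,Pf+a^2\ge 0$ $\mu$-a.e., and then remove the $a$-dependence of the null set by restricting to rational $a$ and taking the supremum, which recovers $(Pf)^2$ by continuity in $a$. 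Both arguments rest on exactly the same structural facts (linearity, positivity, $P\one=\one$, well-definedness on $L^1(\mu)$ and $L^2(\mu)$), but yours avoids approximation in $f$ entirely, trading the paper's density-plus-limit step for the countable-union-over-rationals step; the paper's version, on the other hand, generalizes immediately to other convex functions via the same simple-function scheme, while your completion-of-the-square argument is tied to the quadratic case (though it too extends via tangent lines of a general convex function). The only points worth being explicit about, and which you do address, are that $(f-a)^2\in L^1(\mu)$ because $\mu$ is a probability measure, and that $Pf$ is finite $\mu$-a.e., so the pointwise supremum argument is legitimate.
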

\begin{proof}
Let $f$ be a simple function, i.e., $f = \sum_{i=1}^n a_i\one_{A_i}$, where the $A_i$ are mutually disjoint and $\bigcup_{i=1}^nA_i = \calX$. Then $\sum_{i=1}^nP\one_{A_i} = P\one = \one$, hence, by convexity of $z\mapsto z^2$,
\[
(Pf)^2(x) = \Big(\sum_{i=1}^na_i(P\one_{A_i})(x)\Big)^2\,\le\,\sum_{i=1}^na_i^2(P\one_{A_i})(x) = (Pf^2)(x)
\]
and therefore $(Pf)^2\le Pf^2$. Similarly, $|Pf|\le P|f|$. If $f\in L^2(\mu)$, let $(f_n)$ be a sequence of simple functions such that $\|f_n-f\|_{L^2(\mu)}\to 0$ as $n\to\infty$. Then, for every $A\in\frakB(\calX)$,
\begin{align*}
\int_A \big[(Pf)^2 - Pf^2\big]\,d\mu
&\le\int_A ((Pf)^2 - (Pf_n)^2)\,d\mu + \int_A (Pf_n^2 - Pf^2)\,d\mu\\
&\le \int |P(f-f_n)P(f+f_n)|\,d\mu + \|P(f_n^2-f^2)\|_{L^1(\mu)}\\
&\le \|P(f_n-f)\|_{L^2(\mu)}\|P(f_n+f)\|_{L^2(\mu)} + \|f_n^2-f^2\|_{L^1(\mu)}\\
&\le 2\|f_n-f\|_{L^2(\mu)}\|f_n+f\|_{L^2(\mu)}.
\end{align*}
This proves $(Pf)^2\le Pf^2$ $\mu$-a.e.\ for all $f\in L^2(\mu)$.
\end{proof}

\begin{lem}\label{l:geht}
Let $f,g,h\in L^2(\mu)$ such that $g^2Pf^2\in L^1(\mu)$. Then $f\cdot K_1(gh)\in L^1(\mu)$ and
\[
\int f\cdot K_1(gh)\,d\mu = \int [g\cdot K^*f]\cdot h\,d\mu.
\]
\end{lem}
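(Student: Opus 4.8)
Let $f,g,h\in L^2(\mu)$ such that $g^2Pf^2\in L^1(\mu)$. Then $f\cdot K_1(gh)\in L^1(\mu)$ and
\[
\int f\cdot K_1(gh)\,d\mu = \int [g\cdot K^*f]\cdot h\,d\mu.
\]

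---

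The plan is as follows. First I would reduce the identity to simple functions, where everything is finite sums and the adjoint relation $K_p^* = P$ can be applied termwise, and then pass to the limit using the integrability hypothesis $g^2 Pf^2 \in L^1(\mu)$ together with the already-established pointwise bounds $(Pf)^2 \le Pf^2$ and $|Pf| \le P|f|$.

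\textbf{Step 1 (integrability of $f\cdot K_1(gh)$).} I would first check that the left-hand side makes sense. Since $|K_1(gh)| = |\int gh\,\rho(\cdot,dy)| \le K_1(|g|\,|h|)$ and, by Cauchy--Schwarz applied to the positive kernel $\rho(x,\cdot)$, $K_1(|g|\,|h|)(x)^2 \le K_1(g^2)(x)\cdot K_1(h^2)(x)$, one gets, using the duality $\int \phi\cdot K_1\psi\,d\mu = \int (P\phi)\cdot\psi\,d\mu$ for nonnegative $\phi,\psi$,
\[
\int |f|\cdot K_1(|g|\,|h|)\,d\mu \le \int |f|\cdot \big(K_1 g^2\big)^{1/2}\big(K_1 h^2\big)^{1/2}\,d\mu \le \Big(\int f^2\cdot K_1 g^2\,d\mu\Big)^{1/2}\Big(\int K_1 h^2\,d\mu\Big)^{1/2},
\]
and $\int f^2\cdot K_1 g^2\,d\mu = \int (Pf^2)\cdot g^2\,d\mu < \infty$ by hypothesis, while $\int K_1 h^2\,d\mu = \int P h^2\,d\mu = \|h\|^2$; hence $f\cdot K_1(gh)\in L^1(\mu)$. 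Similarly $[g\cdot K^*f]\cdot h = [g\cdot Pf]\cdot h\in L^1(\mu)$ since $\int |g\cdot Pf\cdot h|\,d\mu \le \|h\|\,(\int g^2(Pf)^2\,d\mu)^{1/2}\le \|h\|\,(\int g^2\,Pf^2\,d\mu)^{1/2}<\infty$.

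\textbf{Step 2 (the identity for simple functions, then approximation).} For simple $f$, $g$, $h$, the function $gh$ is simple, so $K_1(gh)$ lies in every $L^q$ and the adjoint identity $\int f\cdot K_1(gh)\,d\mu = \int (Pf)\cdot gh\,d\mu = \int [g\cdot K^*f]\cdot h\,d\mu$ holds directly. For general $f,g,h$ I would pick simple functions $f_n\to f$, $g_n\to g$, $h_n\to h$ in $L^2(\mu)$; one can also arrange $|g_n|\le |g|$ and $|h_n|\le|h|$ pointwise and, crucially, $|f_n|\le |f|$ pointwise (truncating and discretizing in the standard way). The right-hand sides converge: $\int [g_n Pf_n] h_n\,d\mu \to \int [g\,Pf]\,h\,d\mu$ by splitting the difference into three terms and using that $P$ is bounded on $L^2$ together with dominated convergence governed by $g^2(Pf)^2\le g^2 Pf^2\in L^1$. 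For the left-hand sides, $K_1(g_n h_n)\to K_1(gh)$ needs care; I would instead estimate $|\int f\cdot K_1(g_nh_n - gh)\,d\mu|$ using the Cauchy--Schwarz bound from Step 1 applied with $g_nh_n-gh$ in place of $gh$, reducing to $\int f^2 K_1\big((g_nh_n-gh)^2\big)\,d\mu = \int (Pf^2)\cdot (g_nh_n-gh)^2\,d\mu$, which tends to $0$ because $g_nh_n\to gh$ in $L^2$ (as $g_n,h_n$ are uniformly dominated) and $Pf^2\in L^1$ lets one invoke dominated convergence after passing to a subsequence with a.e.\ convergence; combined with $|\int f\cdot K_1(gh)\,d\mu - \int f\cdot K_1(g_nh_n)\,d\mu|$ handled this way, plus the simple-function identity, the claim follows.

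\textbf{Main obstacle.} The delicate point is the passage to the limit on the left-hand side: $K_1$ is an $L^1$-isometry but not continuous on $L^2$ in a way that interacts simply with the outer multiplication by $f$, so one cannot just say "$K_1(g_nh_n)\to K_1(gh)$". The device that rescues the argument is the weighted Cauchy--Schwarz bound $\int f^2\,K_1(\phi^2)\,d\mu = \int (Pf^2)\phi^2\,d\mu$, which turns the problematic $L^1$-convergence of $f\cdot K_1(\cdot)$ into an $L^2$-convergence of $\phi$ against the finite weight $Pf^2\,d\mu$ — precisely the hypothesis $g^2 Pf^2\in L^1$ ensures this weight controls the relevant products. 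Everything else (the simple-function case, boundedness of $P$, the pointwise inequalities for $P$) is available from the preceding lemmas in this appendix.
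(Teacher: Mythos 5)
Your Step 1 is fine and essentially reproduces the paper's integrability argument (the paper bounds $\int|f|\,|K_1(gh)|\,d\mu$ by $\int|g|\,P|f|\,|h|\,d\mu$, you do it via $(Pf^2)^{1/2}$; both rest on the hypothesis $g^2Pf^2\in L^1(\mu)$ and the pointwise inequalities for $P$). The genuine gap is in Step 2, in the passage to the limit on the left-hand side. You reduce to showing $\int (Pf^2)\,(g_nh_n-gh)^2\,d\mu\to 0$ and invoke ``$g_nh_n\to gh$ in $L^2$'' plus dominated convergence. Neither ingredient is available: $g,h\in L^2(\mu)$ only gives $gh\in L^1(\mu)$, so $L^2$-convergence of $g_nh_n$ to $gh$ has no meaning in general; and the natural dominating function for the integrand, $4\,(Pf^2)\,g^2h^2$ (using $|g_n|\le|g|$, $|h_n|\le|h|$), is not known to be integrable --- the hypothesis controls $g^2Pf^2$, but multiplying by the merely integrable $h^2$ can destroy integrability. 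So as written this limit step fails. It can be repaired within your scheme by splitting $|g_nh_n-gh|\le|g_n|\,|h_n-h|+|g_n-g|\,|h|$ \emph{before} applying the kernel Cauchy--Schwarz, pairing $f^2$ with $K_1 g_n^2\le K_1g^2$ in the first piece (so the factor $\int K_1(h_n-h)^2\,d\mu\le L^2\|h_n-h\|^2\to0$ does the work) and with $K_1(g_n-g)^2$ in the second (where $\int Pf^2\,(g_n-g)^2\,d\mu\to0$ by dominated convergence with dominant $4g^2Pf^2$); note also that your identity $\int K_1h^2\,d\mu=\|h\|^2$ only holds for the invariant measure, in general one has the bound $L^2\|h\|^2$.

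The paper avoids the whole difficulty by not approximating $g$ and $h$ at all: it fixes $g,h$ and approximates only $f$ by bounded simple functions $f_n$ with $|f_n|\nearrow|f|$ and $f_n\to f$ a.e. For such $f_n$ the duality $\int f_n\,K_1(gh)\,d\mu=\int (K_1^*f_n)\,gh\,d\mu$ is immediate ($f_n\in L^\infty$, $gh\in L^1$), the $L^1$-contractivity of $P$ gives $K^*f_n\to K^*f$ in $L^1$ and hence a.e.\ along a subsequence, and both sides pass to the limit by dominated convergence with the dominants $|f|\,|K_1(gh)|$ and $|g|\,P|f|\,|h|$, whose integrability is exactly what your Step 1 establishes. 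You should either adopt this one-variable approximation or fix the splitting in your left-hand-side estimate as indicated; the RHS convergence in your joint approximation also needs the a.e.-subsequence argument for $Pf_n$ rather than only ``$P$ bounded on $L^2$''.
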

\begin{proof}
Let $(f_n)\subset L^\infty(\mu)$ be a sequence of simple functions such that $f_n\to f$ and $|f_n|\nearrow|f|$ pointwise $\mu$-a.e.\ as $n\to\infty$. Then
\[
\int|K^*(f_n-f)|\,d\mu\,\le\,\int K^*|f_n-f|\,d\mu = \int|f_n-f|\,d\mu,
\]
which converges to zero as $n\to\infty$ by dominated convergence ($|f_n-f|\le 2|f|$). Hence, there exists a subsequence $(f_{n_k})$ such that $K^*f_{n_k}\to K^*f$ $\mu$-a.e.\ as $k\to\infty$. WLOG, we may therefore assume that $K^*f_n\to K^*f$ $\mu$-a.e.\ as $n\to\infty$. By monotone convergence,
\begin{align*}
\int |f||K_1(gh)|\,d\mu
&= \lim_{n\to\infty}\int |f_n||K_1(gh)|\,d\mu\le \limsup_{n\to\infty}\int |f_n|\cdot K_1(|gh|)\,d\mu\\
&= \limsup_{n\to\infty}\int K_1^*|f_n|\cdot |gh|\,d\mu \,\le\, \int |g|P|f|\cdot |h|\,d\mu,
\end{align*}
which is a finite number. Hence, indeed, $f\cdot K_1(gh)\in L^1(\mu)$ and, by dominated convergence,
\begin{align*}
\int f\cdot K_1(gh)\,d\mu
&= \lim_{n\to\infty}\int f_n\cdot K_1(gh)\,d\mu = \lim_{n\to\infty}\int [g\cdot K_1^*f_n]\cdot h\,d\mu = \int [g\cdot K^*f]\cdot h\,d\mu,
\end{align*}
as claimed.
\end{proof}

\section{Auxiliary results}\label{a:proofs}

In this section, we provide auxiliary results on the invertibility of the considered matrices. 

\begin{defn}\label{d:lin_ind}
Let $\mu$ be an arbitrary measure on $\calX$, and let measurable functions $\psi_1,\ldots,\psi_N : \calX\to\R$ be given.

{\bf (a)} We say that $\psi_1,\ldots,\psi_N$ are linearly independent w.r.t.\ the measure $\mu$ \braces{or simply $\mu$-linearly independent} if $\sum_{j=1}^N\la_j\psi_j = 0$ $\mu$-a.e.\ implies $\la_1 = \dots = \la_N = 0$.

\smallskip\noindent
{\bf (b)} We say that $\psi_1,\ldots,\Psi_N$ are strongly linearly independent w.r.t.\ the measure $\mu$ \braces{or simply strongly $\mu$-linearly independent} if $\mu\left(\sum_{j=1}^N\la_j\psi_j = 0\right) > 0$ implies $\la_1 = \dots = \la_N = 0$.
\end{defn}

If $\psi_1,\ldots,\Psi_N$ are strongly $\mu$-linearly independent, it follows in particular that the sets of zeros $\psi_j^{-1}(\{0\})$ are null sets (w.r.t.\ $\mu$). Furthermore, note that the following implications hold:
\begin{center}
strong $\mu$-linear independence$\quad\Lra\quad$ $\mu$-linear independence$\quad\Lra\quad$linear independence.
\end{center}

The following lemma holds for both cases {\bf (S1)} and {\bf (S2)}.

\begin{lem}\label{l:mat_repr}
Let $\mu\in\{\nu,\pi\}$ and let $\psi_1,\ldots,\psi_N$ be $\mu$-linearly independent. Then, the matrix $C$ is invertible, and the matrix representation $\wt K_\bV$ of the compression $P_\bV K|_\bV$ of~$K$ to $\bV$ 
is given by $\wt K_\bV = C^{-1}C_+$.
\end{lem}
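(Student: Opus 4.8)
The plan is to first establish invertibility of $C$ from $\mu$-linear independence, and then identify $C^{-1}C_+$ as the matrix of the compression in the basis $\calD$. For invertibility: $C = (\<\psi_i,\psi_j\>)_{i,j}$ is the Gram matrix of $\psi_1,\dots,\psi_N$ in $L^2(\mu)$. If $C v = 0$ for some $v = (v_1,\dots,v_N)^\top\in\R^N$, then $0 = v^\top C v = \|\sum_j v_j\psi_j\|^2$, so $\sum_j v_j\psi_j = 0$ in $L^2(\mu)$, i.e.\ $\mu$-a.e.; by $\mu$-linear independence $v = 0$. Hence $C$ is invertible (it is also symmetric positive definite). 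This also shows $\calD$ is a genuine basis of the $N$-dimensional space $\bV$.

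For the matrix representation: recall that for a linear map $A:\bV\to\bV$ on a finite-dimensional inner product space with basis $\{\psi_1,\dots,\psi_N\}$, the matrix $B = (B_{ij})$ of $A$ w.r.t.\ this basis is characterized by $A\psi_j = \sum_i B_{ij}\psi_i$. Applying this to $A = P_\bV K|_\bV$, I would expand $P_\bV K\psi_j$ in the basis $\calD$: write $P_\bV K\psi_j = \sum_i B_{ij}\psi_i$ with unknown coefficients $B_{ij}$, then take the inner product with $\psi_k$ for each $k$. Since $P_\bV$ is the orthogonal projection onto $\bV$ and $\psi_k\in\bV$, we have $\<\psi_k, P_\bV K\psi_j\> = \<\psi_k, K\psi_j\> = (C_+)_{kj}$ (using here the convention in the definition of $C_+$), while $\<\psi_k,\sum_i B_{ij}\psi_i\> = \sum_i C_{ki}B_{ij} = (CB)_{kj}$. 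Thus $CB = C_+$, and since $C$ is invertible, $B = C^{-1}C_+$, which is exactly $\wt K_\bV$ as claimed. One should double-check the index conventions (which slot is the "output" index) so that the identity $CB = C_+$ comes out in the stated order rather than transposed; depending on the convention one may instead obtain $\wt K_\bV = C^{-1}C_+$ directly or its transpose, and the statement fixes the convention.

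The main subtlety — really the only nontrivial point — is making sure $P_\bV K|_\bV$ is well-defined as a map $\bV\to\bV$: this requires $K\psi_j\in L^2(\mu)$ for each $j$, which holds because $K = K_2$ is a bounded operator on $L^2(\mu)$ by the standing assumption \eqref{e:nu} (see the footnote after \eqref{e:koopman}), so the compression $P_\bV K|_\bV$ is a genuine bounded operator on the finite-dimensional space $\bV$. Everything else is the standard translation between an operator and its matrix via the Gram matrix, so I do not expect any real obstacle; the computation is routine linear algebra once boundedness of $K$ is invoked.
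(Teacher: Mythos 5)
Your proposal is correct and follows essentially the same route as the paper's proof: the Gram-matrix argument for invertibility of $C$ (your quadratic-form step $v^\top Cv=\|\sum_j v_j\psi_j\|^2=0$ is just a slightly more explicit version of the paper's orthogonality argument), and the identity $\<\psi_i,K\psi_j\>=\<\psi_i,P_\bV K\psi_j\>=(C\wt K_\bV)_{ij}$ giving $C_+=C\wt K_\bV$, exactly as in Lemma~\ref{l:mat_repr}'s proof. The index convention you flag indeed comes out in the stated order with the paper's definition of $C_+$, so no transpose issue arises.
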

\begin{proof}
Let $v\in\R^N$ such that $Cv=0$. Then we have $\sum_{j=1}^N\<\psi_i,\psi_j\>v_j = 0$ and hence $\<\psi_i,\sum_{j=1}^N v_j\psi_j\> = 0$ for all $i\in [N]$. But this implies $\sum_{j=1}^N v_j\psi_j = 0$ $\mu$-a.e.\ and thus $v_1=\cdots=v_N=0$ as the $\psi_j$ are $\mu$-linearly independent. Hence, $C$ is indeed invertible.

For any $j\in [N]$, we have $P_\bV K\psi_j = \sum_{i=1}^N a_{ij}\psi_i$ with some $a_{ij}\in\R$ forming the matrix $\wt K_\bV = (a_{ij})_{i,j=1}^N$. Next, for $i,j\in [N]$,
\[
(C_+)_{ij} = \<\psi_i,K\psi_j\> = \<\psi_i,P_\bV K\psi_j\> = \Big\<\psi_i,\sum_{\ell=1}^N a_{\ell j}\psi_\ell\Big\> = \sum_{\ell=1}^N \<\psi_i,\psi_\ell\>a_{\ell j} = (C\wt K_\bV)_{ij},
\]
and the claim follows.
\end{proof}


\begin{lem}\label{l:case1}
In case {\bf (S2)}, $\wh C$ is invertible a.s.\ if and only if $m\ge N$ and $\psi_1,\ldots,\psi_N$ are strongly $\mu$-linearly independent.
\end{lem}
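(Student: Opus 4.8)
The statement characterizes, in the i.i.d.\ case \textbf{(S2)}, when $\wh C = \tfrac1m\Psi_X\Psi_X^\top$ is almost surely invertible. Since $\wh C$ is a Gram-type matrix, $\wh C$ is invertible if and only if the $N$ rows of $\Psi_X \in \R^{N\times m}$ are linearly independent, equivalently if and only if $\rank\Psi_X = N$, which forces $m\ge N$ as a trivial necessary condition. So the plan is to reduce everything to a rank statement about $\Psi_X$ and then analyze when $\rank\Psi_X = N$ holds almost surely.

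\textbf{Necessity.} Suppose $\wh C$ is a.s.\ invertible. Then $m\ge N$ is immediate. For strong $\mu$-linear independence, argue by contraposition: if there is $0\ne v\in\R^N$ with $\mu\big(\sum_j v_j\psi_j = 0\big) = p > 0$, set $Z := \Psi^{-1}\big(v^\perp\big) = \{x : v^\top\Psi(x)=0\}$, so $\mu(Z)=p>0$. Since the $x_k$ are i.i.d.\ $\sim\mu$, with probability $p^m > 0$ all $m$ sample points lie in $Z$, in which case $v^\top\Psi_X = 0$, hence $v^\top\wh C v = \tfrac1m\|v^\top\Psi_X\|^2 = 0$ and $\wh C$ is singular. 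This contradicts a.s.\ invertibility.

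\textbf{Sufficiency.} Assume $m\ge N$ and $\psi_1,\dots,\psi_N$ strongly $\mu$-linearly independent. I would prove $\rank\Psi_X = N$ a.s.\ by building up the columns one at a time. Concretely, for $k\in[1:N]$ let $E_k$ be the event that $\Psi(x_0),\dots,\Psi(x_{k-1})$ are linearly independent in $\R^N$; I claim $\bP(E_k)=1$ for all $k\le N$, by induction on $k$. The base case $k=1$ is $\bP(\Psi(x_0)\ne 0)=1$: the set $\{x:\Psi(x)=0\}\subset\bigcap_j\psi_j^{-1}(\{0\})$ is $\mu$-null since strong linear independence makes each $\psi_j^{-1}(\{0\})$ a $\mu$-null set (as noted right after Definition~\ref{d:lin_ind}). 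For the inductive step, condition on $E_k$ (which holds a.s.\ by hypothesis) and on $x_0,\dots,x_{k-1}$; then $\linspan\{\Psi(x_0),\dots,\Psi(x_{k-1})\}$ is a fixed subspace $M\subset\R^N$ of dimension $k-1 \le N-1$, so it is contained in some $(N-1)$-dimensional subspace, and there is $0\ne v\in\R^N$ with $M\subset v^\perp$. By strong $\mu$-linear independence, $\mu\big(\Psi^{-1}(v^\perp)\big) = \mu\big(\{x:v^\top\Psi(x)=0\}\big) < 1$; in fact, since $v\ne 0$ the set $\{v^\top\Psi = 0\} = \{\sum_j v_j\psi_j = 0\}$ cannot be $\mu$-full (otherwise $v=0$), so $\bP\big(\Psi(x_k)\notin M\big) \ge \bP\big(\Psi(x_k)\notin v^\perp\big) = 1 - \mu\big(\{v^\top\Psi=0\}\big) > 0$; but actually I need the stronger conclusion that $\Psi(x_k)\notin M$ with probability $1$, which does not follow merely from $\mu(\{v^\top\Psi=0\})<1$.

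\textbf{Main obstacle.} The gap just flagged is the crux: strong $\mu$-linear independence as stated only gives $\mu\big(\Psi^{-1}(H)\big) < 1$ for every hyperplane $H$, not $=0$, so one cannot directly force $\Psi(x_k)$ off a given $(k-1)$-dimensional subspace with probability one. I expect the resolution is a counting / union argument rather than a pointwise one. The right formulation: $\wh C$ is singular iff $v^\top\Psi_X = 0$ for some $0\ne v$, i.e.\ iff $\rank\Psi_X < N$. Using that $\Psi_X$ has i.i.d.\ columns, one shows that $\bP(\rank\Psi_X < N)=0$ precisely under the stated hypotheses by a measure-theoretic argument: decompose according to which of the finitely-many "coincidence patterns" occurs, or invoke that the distribution of $(\Psi(x_0),\dots,\Psi(x_{m-1}))$ on $(\R^N)^m$, being a product measure whose marginal charges no hyperplane with full mass, assigns zero mass to the algebraic variety $\{\det \text{ of every }N\times N\text{ minor} = 0\}$ — this is a genericity statement that should be proved carefully by induction on $m$, peeling off one column and using Fubini together with the fact that for $\mu$-a.e.\ choice of the first $m-1$ columns spanning a proper subspace, the conditional probability that the last column lands in that subspace is $<1$, then iterating. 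Making this iteration actually deliver probability zero (not just "$<1$ at each step") is the real content and the part I would write out most carefully; I would model it on the standard proof that $m\ge N$ i.i.d.\ draws from a measure charging no hyperplane span $\R^N$ a.s.
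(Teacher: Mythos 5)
Your necessity argument is correct and is essentially the paper's (the paper computes $0=\bP(\la^\top\Psi_X=0)=[\mu(Z)]^m$ for $Z=\{\la^\top\Psi=0\}$, which is the contrapositive of your argument), and the trivial observation $m\ge N$ is fine. The genuine gap is the sufficiency direction, and it stems from a misreading of Definition~\ref{d:lin_ind}(b): strong $\mu$-linear independence says that $\mu\big(\sum_j\la_j\psi_j=0\big)>0$ forces $\la=0$, so for every $\la\neq0$ the set $\{\la^\top\Psi=0\}=\Psi^{-1}(\la^\perp)$ is a $\mu$-\emph{null} set --- not merely of measure $<1$, which is what ordinary $\mu$-linear independence (part (a) of the definition) would give. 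Hence the obstacle you flag does not exist under the actual hypothesis: conditioning on $x_0,\dots,x_{k-1}$, the span $M$ of $\Psi(x_0),\dots,\Psi(x_{k-1})$ has dimension at most $N-1$, so $M\subset v^\perp$ for some $v\neq0$, and $\bP\big(\Psi(x_k)\in M\big)\le\bP\big(v^\top\Psi(x_k)=0\big)=\mu(\{v^\top\Psi=0\})=0$; Fubini plus your induction then yields $\rank\Psi_X=N$ almost surely with no extra counting argument needed. This is exactly what the paper does: it fixes an index $j$ and a set $J$ of at most $N-1$ other indices, writes $\bP\big(Y_j\in\linspan\{Y_i:i\in J\}\big)$ as an integral over the other columns via the product structure, and observes that for any fixed $y_1,\dots,y_k$ there is $\la\neq0$ annihilating them, so the inner probability is $\le\bP(\la^\top Y_j=0)=0$.

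Note that your worry would be substantive under your weaker reading: if one only knew $\mu(\Psi^{-1}(H))<1$ for every hyperplane $H$, the conclusion would be false (give a preimage of a hyperplane mass $p\in(0,1)$; with probability $p^m>0$ all samples land in it and $\wh C$ is singular) --- which is precisely why the lemma requires the strong notion, and precisely the mechanism of your own necessity argument. So the fix is not a new genericity or union-bound argument, but simply invoking the definition at full strength; once you do, your inductive plan closes and coincides with the paper's proof.
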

\begin{proof}
First of all, note that $\wh C$ is invertible if and only if the rank of $\Psi_X$ equals $N$.

Assume that the $\psi_j$ are strongly linearly independent and $m\ge N$. Define the random variables $Y_j := \Psi(x_j)\in\R^N$, $j=0,\ldots,m-1$, and the pushforward measure $\tau := \mu\circ\Psi^{-1}$. Then the $Y_j$ are $\tau$-distributed and independent, and we have $\bP(\la^\top Y_j=0)=0$ for all $j$ and all $\la\in\R^N\backslash\{0\}$.

We shall show that $Y_0,\ldots,Y_{N-1}$ are linearly independent a.s. For this, fix $j\in\{0,\ldots,N-1\}$, let $J\subset\{0,\ldots,N-1\}\backslash\{j\}$ with $1\le k\le N-1$ elements, and define
\[
V := \big\{(y_1,\ldots,y_{k+1})\in (\R^N)^{k+1} : y_{k+1}\in\linspan\{y_1,\ldots,y_k\}\big\}.
\]
Then we have
\begin{align*}
\bP\big(Y_j\in\linspan\{Y_i : i\in J\}\big)
&= \tau^{k+1}(V) = \int_{(\R^N)^{k}}\int_{\linspan\{y_1,\ldots,y_k\}}\,d\tau(y)\,d\tau^{k}(y_1,\ldots,y_k)\\
&= \int_{(\R^N)^{k}}\bP\big(Y_j\in\linspan\{y_1,\ldots,y_k\}\big)\,d\tau^{k}(y_1,\ldots,y_k).
\end{align*}
Fix $k$ vectors $y_1,\ldots,y_k\in\R^N$. Then there exists $\la\in\R^N\backslash\{0\}$ such that $\la^\top [y_1,\ldots,y_k] = 0$, hence 
\[
\bP\big(Y_j\in\linspan\{y_1,\ldots,y_k\}\big)\le\bP(\la^\top Y_j=0) = 0.
\]
This implies $\bP\big(Y_j\in\linspan\{Y_i : i\in J\}\big)=0$, which proves the claim.

Conversely, assume that $\Psi_X$ has rank $N$ a.s., let $\la\in\R^N\backslash\{0\}$ and set $Z := \{\la^\top\Psi=0\}\subset\calX$. Then
\begin{align*}
0 = \bP(\la^\top\Psi_X=0) = \bP(x_k\in Z\;\forall k=0,\ldots,m-1) = \mu^m(Z^m) = [\mu(Z)]^m,
\end{align*}
which proves that $\psi_1,\ldots,\psi_N$ are strongly $\mu$-linearly independent.
\end{proof}

\begin{lem}\label{l:case2}
Let $m\ge N+1$. In case {\bf (S1)}, $\wh C$ is invertible a.s.\ if for each $(N-1)$-dimensional subspace $M\subset\R^N$ and $x\in\Psi^{-1}(M)$ we have $\rho(x,\Psi^{-1}(M))=0$.
\end{lem}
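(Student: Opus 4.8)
The plan is to show that, almost surely, $\Psi(x_0),\dots,\Psi(x_{m-1})$ span $\R^N$, which is equivalent to $\rank\Psi_X=N$ and hence to invertibility of $\wh C=\tfrac1m\Psi_X\Psi_X^\top$. Write $\calF_k:=\sigma(X_0,\dots,X_k)$, $V_k:=\linspan\{\Psi(x_0),\dots,\Psi(x_k)\}$ and $R_k:=\dim V_k$; note $R_k\le R_{k+1}$ always, $V_k$ is $\calF_k$-measurable, and since ``$\Psi(x_{k+1})\in V_k$'' is a Borel condition on the matrix $[\Psi(x_0),\dots,\Psi(x_{k+1})]\in\R^{N\times(k+2)}$ (a rank condition), the event $\{R_{k+1}=R_k\}=\{\Psi(x_{k+1})\in V_k\}$ lies in $\calF_{k+1}$ and $(\omega,y)\mapsto\one_{\{\Psi(y)\in V_k(\omega)\}}$ is $\calF_k\otimes\frakB(\calX)$-measurable.

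The key step is to prove that, for each $k\in[0:m-2]$,
\[
\bP\big(R_{k+1}=R_k,\ R_k<N\big)=0.
\]
By the (time-homogeneous) Markov property, $\bP\big(\Psi(X_{k+1})\in V_k\,\big|\,\calF_k\big)=\rho\big(X_k,\Psi^{-1}(V_k)\big)$ almost surely. On the event $\{R_k<N\}$ the subspace $V_k$ has dimension $<N$ and is therefore contained in some $(N-1)$-dimensional subspace $M$; moreover $\Psi(x_k)\in V_k$ forces $X_k\in\Psi^{-1}(V_k)\subseteq\Psi^{-1}(M)$. The standing hypothesis now gives $\rho\big(X_k,\Psi^{-1}(M)\big)=0$, whence $\rho\big(X_k,\Psi^{-1}(V_k)\big)=0$ on $\{R_k<N\}$. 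Taking expectations yields the display.

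Finally I would combine these. Let $G$ be the event on which, for every $k\in[0:m-2]$, $R_{k+1}>R_k$ whenever $R_k<N$; by the previous step $\bP(G)=1$. On $G$ a short induction shows $R_k\ge\min\{k,N\}$ for all $k\in[0:m-1]$: one has $R_0\ge0$, and if $R_k\ge\min\{k,N\}$ then either $R_k=N$, so $R_{k+1}=N$ (as $R_k$ is non-decreasing and bounded by $N$), or $R_k<N$, in which case $k<N$, $R_k\ge k$, and $R_{k+1}\ge R_k+1\ge k+1=\min\{k+1,N\}$. Since $m\ge N+1$, this gives $R_{m-1}\ge\min\{m-1,N\}=N$, i.e.\ $R_{m-1}=N$ on $G$, so $\wh C$ is invertible almost surely.

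The step I expect to require the most care is the measurability bookkeeping behind the conditional-probability identity --- verifying that $V_k$ (hence $\Psi^{-1}(V_k)$) depends measurably on $\calF_k$ and that $\bP(\Psi(X_{k+1})\in V_k\mid\calF_k)=\rho(X_k,\Psi^{-1}(V_k))$ follows from the Markov property applied to an $\calF_k$-measurable family of sets; the remaining ingredients are elementary linear algebra and a union bound over the $m-1$ steps.
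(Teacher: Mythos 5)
Your proof is correct, and its engine is the same as the paper's: the decisive observation is that the current state $x_k$ lies in $\Psi^{-1}$ of the span generated so far, so that after enlarging that span to an $(N-1)$-dimensional subspace $M$ containing $\Psi(x_k)$ the hypothesis forces $\rho(x_k,\Psi^{-1}(M))=0$ (the paper isolates this enlargement step in its opening paragraph). Where you differ is the bookkeeping. The paper splits on the event $X_0\in A$ with $A=\Psi^{-1}(\{0\})$, proving separately that $[\Psi(X_1),\ldots,\Psi(X_N)]$ is a.s.\ invertible given $X_0\in A$ and $[\Psi(X_0),\ldots,\Psi(X_{N-1})]$ is a.s.\ invertible given $X_0\notin A$, by writing the relevant probabilities as iterated kernel integrals whose innermost factor vanishes pointwise. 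You instead run a uniform induction on the rank process $R_k$, showing $\bP(R_{k+1}=R_k,\ R_k<N)=0$ for every $k$ and concluding $R_{m-1}=N$ from $m\ge N+1$ via a union bound. This buys a cleaner argument: no case distinction on whether $\Psi(X_0)=0$ (a possibly ``wasted'' first sample is absorbed into $R_0\ge 0$, which is exactly where $m\ge N+1$ enters), and no need for the paper's side remark that $\mu(A)<1$. The price is the identity $\bP(\Psi(X_{k+1})\in V_k\mid\calF_k)=\rho(X_k,\Psi^{-1}(V_k))$ for the $\calF_k$-measurable random subspace $V_k$, which you correctly flag; it follows from joint measurability of $(\omega,y)\mapsto\one_{\{\Psi(y)\in V_k(\omega)\}}$ (a Borel rank condition) together with a standard monotone-class argument. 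The paper sidesteps this by writing out iterated integrals explicitly, although its inner integration domains also depend on the outer variables, so essentially the same measurability point is implicit there as well.
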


\begin{proof}
If $\calL\subset\R^N$ is a subspace with $\dim\calL < N-1$, for any $x\in\calX$ there exists an $(N-1)$-dimensional subspace $M\supset\calL$ with $x\in\Psi^{-1}(M)$, and we obtain $\rho(x,\Psi^{-1}(\calL))\le\rho(x,\Psi^{-1}(M)) = 0$. Hence, the assumption holds for all subspaces $M\subset\R^N$.

In the following, let $A := \Psi^{-1}(\{0\})$. Assume that $\mu(A)>0$. Note that $\mu(A)<1$ by $\mu$-linear independence of the $\psi_j$. We show that
\begin{align}
\bP([\Psi(X_1),\ldots,\Psi(X_{N})]\text{ invertible}\,|\,X_0\in A) &= 1\label{e:notinA}\\
\bP([\Psi(X_0),\ldots,\Psi(X_{N-1})]\text{ invertible}\,|\,X_0\notin A) &= 1.\label{e:inA}
\end{align}
Then the claim follows. By assumption, we have $\rho(x,A)=0$ for $x\in A$, hence
\[
\bP(\Psi(X_1)=0\,|\,X_0\in A) = \frac 1{\mu(A)}\int_A\rho(x,A)\,d\mu(x) = 0
\]
Next, let $k\in\{1,\ldots,N-1\}$ and set $M(x_1,\ldots,x_k) := \linspan\{\Psi(x_1),\ldots,\Psi(x_k)\}$ for $x_1,\ldots,x_k\in\calX$. Then
\begin{align*}
\bP\big(\Psi(X_{k+1})&\in M(X_1,\ldots,X_k)\,|\,X_0\in A\big)\\
&= \frac 1{\mu(A)}\int_A\int_{x_1}\dots\int_{x_k}\int_{\Psi^{-1}(M(x_1,\ldots,x_k))}\rho(x_k,dx_{k+1})\rho(x_{k-1},dx_k)\dots\rho(x_0,dx_1)\,d\mu(x_0)\\
&= \frac 1{\mu(A)}\int_A\int_{x_1}\dots\int_{x_k}\rho(x_k,\Psi^{-1}(M(x_1,\ldots,x_k)))\rho(x_{k-1},dx_k)\dots\rho(x_0,dx_1)\,d\mu(x_0).
\end{align*}
Now, note that for all $x_k\in\calX$ we have $x_k\in\Psi^{-1}(M(x_1,\ldots,x_k))$, hence $\rho(x_k,\Psi^{-1}(M(x_1,\ldots,x_k)))=0$. Therefore, we have $\bP\big(\Psi(X_{k+1})\in M(X_1,\ldots,X_k)\,|\,X_0\in A\big) = 0$ for all $k=1,\ldots,N-1$, which shows \eqref{e:notinA}. The relation \eqref{e:inA} can be shown similarly with all indices dropped by one and $A$ replaced by $\calX\backslash A$. The proof for \eqref{e:inA} carries over to the case $\mu(A)=0$ without conditioning and with $\calX\setminus A$ replaced by $\calX$.
\end{proof}

The following result provides an improved version of Theorem 12 in \cite{NuskPeit23}.
\begin{lem}\label{l:prob_absch}
Let $C,D\in\R^{N\times N}$ be such that $C$ is invertible and $D\neq 0$. Let $\wh C,\wh D\in\R^{N\times N}$ be random matrices such that $\wh C$ is invertible a.s.. Then for any sub-multiplicative matrix norm $\|\,\cdot\,\|$ on $\R^{N\times N}$ and any $\veps > 0$ we have
\begin{align*}
\bP\big(\|C^{-1}D - \wh C^{-1}\wh D\| > \veps\big)
\le \bP\Big(\|D - \wh D\| > \tfrac{\veps}{\tau}\|D\|\Big) + \bP\Big(\|C-\wh C\| > \tfrac{\veps}{\tau}\|C^{-1}\|^{-1}\Big),
\end{align*}
where $\tau = 2\|C^{-1}\|\|D\| + \veps$.
\end{lem}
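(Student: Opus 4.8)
The plan is to reduce the probabilistic inequality to a deterministic perturbation estimate and then close with a union bound. Concretely, I would first establish the following deterministic claim: on the event
$G := \{\|D - \wh D\| \le \tfrac{\veps}{\tau}\|D\|\}\cap\{\|C - \wh C\| \le \tfrac{\veps}{\tau}\|C^{-1}\|^{-1}\}$
one has $\|C^{-1}D - \wh C^{-1}\wh D\| \le \veps$. Granting this, the complement satisfies $\{\|C^{-1}D - \wh C^{-1}\wh D\| > \veps\}\subseteq\{\|D - \wh D\| > \tfrac{\veps}{\tau}\|D\|\}\cup\{\|C - \wh C\| > \tfrac{\veps}{\tau}\|C^{-1}\|^{-1}\}$, and subadditivity of $\bP$ yields exactly the asserted bound. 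The standing hypothesis that $\wh C$ is invertible a.s.\ is only needed so that $\wh C^{-1}\wh D$ is a.s.\ well defined; on the event $G$ the invertibility of $\wh C$ will be automatic from the Neumann-series argument below, so the measure-zero exceptional set is harmless.

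For the deterministic claim I would split $C^{-1}D - \wh C^{-1}\wh D = (C^{-1} - \wh C^{-1})D + \wh C^{-1}(D - \wh D)$ and use the resolvent identity $C^{-1} - \wh C^{-1} = C^{-1}(\wh C - C)\wh C^{-1}$, so that by sub-multiplicativity $\|C^{-1}D - \wh C^{-1}\wh D\| \le \|C^{-1}\|\,\|\wh C - C\|\,\|\wh C^{-1}\|\,\|D\| + \|\wh C^{-1}\|\,\|D - \wh D\|$. The auxiliary step is a bound on $\|\wh C^{-1}\|$ valid on $G$: since $\tau = 2\|C^{-1}\|\|D\| + \veps > \veps$ we have $\|C^{-1}\|\,\|C - \wh C\| \le \veps/\tau < 1$, hence $\wh C = C(I - C^{-1}(C-\wh C))$ is invertible and the Neumann series gives $\|\wh C^{-1}\| \le \|C^{-1}\|/(1 - \|C^{-1}\|\|C - \wh C\|) \le \|C^{-1}\|\,\tau/(\tau - \veps) = \tau/(2\|D\|)$, using $\tau - \veps = 2\|C^{-1}\|\|D\|$ in the last step.

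Substituting this back, the first term is at most $\|C^{-1}\|\,\|\wh C - C\|\cdot\tfrac{\tau}{2\|D\|}\cdot\|D\| \le \tfrac{\veps}{\tau}\cdot\tfrac{\tau}{2} = \tfrac{\veps}{2}$, and the second is at most $\tfrac{\tau}{2\|D\|}\cdot\tfrac{\veps}{\tau}\|D\| = \tfrac{\veps}{2}$, so their sum is $\le\veps$, proving the claim. I do not anticipate a genuine obstacle; the only real care is the constant bookkeeping, and it is precisely the requirement that each of the two contributions be exactly $\veps/2$ that dictates the choice $\tau = 2\|C^{-1}\|\|D\| + \veps$ (the additive $\veps$ being essential to guarantee $\veps/\tau < 1$ for all $\veps>0$, so that the Neumann bound always applies). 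Only sub-multiplicativity of the norm is used, so the argument works verbatim in any unital Banach algebra.
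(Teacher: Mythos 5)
Your proof is correct and follows essentially the same route as the paper: the same decomposition $C^{-1}D-\wh C^{-1}\wh D=(C^{-1}-\wh C^{-1})D+\wh C^{-1}(D-\wh D)$, the same perturbation identity $C^{-1}-\wh C^{-1}=C^{-1}(\wh C-C)\wh C^{-1}$ (your Neumann bound on $\|\wh C^{-1}\|$ is the same estimate the paper obtains via $\|\wh C^{-1}\|\le\|C^{-1}\|+\|\wh C^{-1}-C^{-1}\|$), and the same $\veps/2+\veps/2$ bookkeeping yielding $\tau=2\|C^{-1}\|\|D\|+\veps$. The only difference is presentational: you prove a deterministic implication on the good event and then apply a union bound, whereas the paper chains the corresponding inclusions directly at the level of probability events.
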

\begin{proof}
We have $C^{-1}D - \wh C^{-1}\wh D = \wh C^{-1}(D - \wh D) + (C^{-1} - \wh C^{-1})D$ and thus
\begin{align*}
\bP\big(\|C^{-1}D - \wh C^{-1}\wh D\| > \veps\big)
&\le \bP\big(\|\wh C^{-1}\|\|D - \wh D\| + \|D\|\|C^{-1}-\wh C^{-1}\| > \veps\big)\\
&\le \bP\big(\|\wh C^{-1}\|\|D - \wh D\| > \tfrac\veps 2\;\vee\; \|D\|\|C^{-1}-\wh C^{-1}\| > \tfrac\veps 2\big)\\
&= \bP\Big(\|D - \wh D\| > \tfrac{\veps/2}{\|\wh C^{-1}\|}\;\vee\; \|C^{-1}-\wh C^{-1}\| > \tfrac{\veps/2}{\|D\|}\Big)\\
&\le\bP\Big(\|D - \wh D\| > \tfrac{\veps/2}{\|C^{-1}\| + \tfrac{\veps/2}{\|D\|}}\;\vee\; \|C^{-1}-\wh C^{-1}\| > \tfrac{\veps/2}{\|D\|}\Big)\\
&\le\bP\Big(\|D - \wh D\| > \tfrac{\veps/2}{\tau}\|D\|\Big) + \bP\Big(\|C^{-1}-\wh C^{-1}\| > \tfrac{\veps/2}{\|D\|}\Big).
\end{align*}
Next, we estimate
\begin{align*}
\|C^{-1} - \wh C^{-1}\| = \|C^{-1}(\wh C - C)\wh C^{-1}\|\,\le\,\|C^{-1}\|\|\wh C - C\|\big(\|\wh C^{-1} - C^{-1}\|+\|C^{-1}\|\big).
\end{align*}
Hence, if $\|C - \wh C\| < \frac{1}{\|C^{-1}\|}$, then
\[
\|C^{-1} - \wh C^{-1}\|\,\le\,\frac{\|C^{-1}\|^2\|C - \wh C\|}{1 - \|C^{-1}\|\|C-\wh C\|}
\]
Therefore,
\begin{align*}
\bP\Big(\|C^{-1}-\wh C^{-1}\| > \tfrac{\veps/2}{\|D\|}\Big)
&\le \bP\Big(\|C-\wh C\|\ge\tfrac{1}{\|C^{-1}\|}\;\vee\;\tfrac{\|C^{-1}\|^2\|C - \wh C\|}{1 - \|C^{-1}\|\|C-\wh C\|} > \tfrac{\veps/2}{\|D\|}\Big)\\
&= \bP\Big(\|C-\wh C\|\ge\tfrac{1}{\|C^{-1}\|}\;\vee\;\|C-\wh C\| > \tfrac{\veps/2}{\|C^{-1}\|(\|D\|\|C^{-1}\|+\veps/2)}\Big)\\
&= \bP\Big(\|C-\wh C\| > \tfrac{\veps/2}{\|C^{-1}\|(\|D\|\|C^{-1}\|+\veps/2)}\Big),
\end{align*}
and the lemma is proved.
\end{proof}

\section{Spectral measures of unitary operators}\label{s:unitary}
Let $U$ be a unitary operator in a Hilbert space $\calH$. By the spectral theorem for normal operators in Hilbert spaces (see, e.g., \cite{Conw10}), there exists an operator-valued measure $E$ on the Borel sigma algebra $\calB$ of $\T = \{z\in\C : |z|=1\}$, which has the following properties:
\begin{itemize}
\item $E(\Delta)$ is an orthogonal projection for all $\Delta\in\frakB$.
\item $E(\T\backslash\sigma(U)) = 0$ and $E(\sigma(U)) = I$.
\item $E(\Delta_1\cap\Delta_2) = E(\Delta_1)E(\Delta_2) = E(\Delta_2)E(\Delta_1)$ for $\Delta_1,\Delta_2\in\calB$.
\item $E(\Delta)\calH$ is $U$-invariant for each $\Delta\in\calB$.
\item $\sigma(U|_{E(\Delta)\calH})\subset\sigma(U)\cap\Delta$ for closed $\Delta\in\calB$.
\end{itemize}
The measure $E$ is called the {\em spectral measure} of $U$. In the finite-dimensional case (i.e., when $U\in\C^{n\times n}$ is a unitary matrix), the projection $E(\Delta)$ is the orthogonal projection onto the sum of eigenspaces corresponding to the eigenvalues of $U$ in $\Delta$. This is also true in the infinite-dimensional case if $U$ has only discrete spectrum in $\Delta$ (i.e., isolated eigenvalues).

Let $g : \T\to\C$ be a bounded measurable function. Then $g(U) := \int g\,dE$ defines a bounded normal operator, and for $f\in\calH$ we have
$$
\<g(U)f,f\> = \int_\T g(z)\,d\mu_f(z)\qquad\text{and}\qquad \|g(U)f\|^2 = \int_\T |g(z)|^2\,d\mu_f(z),
$$
where $\mu_f$ is the measure $\mu_f(\Delta) := \|E(\Delta)f\|^2$, $\Delta\in\calB$.
\end{document}